\documentclass[12pt]{amsart}

\usepackage{lineno,hyperref}
\usepackage{amsmath,amsfonts,amsthm,mathrsfs,mathtools}
\usepackage{amssymb,latexsym}
\usepackage{cases}
\usepackage[numbers]{natbib}
\usepackage{booktabs}
\usepackage{bm} 
\usepackage{cleveref}
\usepackage{paralist}
\usepackage{multirow}
\usepackage{mfirstuc}
\usepackage{amscd} 

\textheight22cm
\topmargin-4mm

\textwidth16.5cm
\evensidemargin-3mm
\oddsidemargin-3mm

\renewcommand\appendix{\setcounter{secnumdepth}{-2}}

\usepackage{url,cancel}
\usepackage{float}

\usepackage{caption,enumitem}
\captionsetup{font=footnotesize,labelfont=bf}
\captionsetup{textfont=footnotesize}
\captionsetup[table]{position=auto}
\captionsetup{format=plain,labelsep=newline,singlelinecheck=false}

\newtheorem{thm}{Theorem}[section]
\newtheorem*{Thm}{Theorem}

\newtheorem{lem}[thm]{Lemma}
\newtheorem{prop}[thm]{Proposition}
\newtheorem{cor}[thm]{Corollary}
\theoremstyle{remark}
\newtheorem{re}[thm]{Remark}

\theoremstyle{definition}
\newtheorem{ex}[thm]{Example}
\newtheorem{defn}[thm]{Definition}
\newcommand{\pmid}{{\mathfrak{P}\mid \mathfrak{p}}}
\newcommand{\Ker}{\mbox{Ker}}
\newcommand{\gen}{\mbox{gen}}
\newcommand{\spn}{\mbox{spn}}
\newcommand{\cls}{\mbox{cls}}
\newcommand{\sgp}{\mbox{Sgp}}
\newcommand{\ord}{\mathrm{ord}}

\newcommand{\pr}{\mathrm{pr}}
\newcommand{\rank}{\mbox{rank}\,}
\newcommand{\chartic}{\mbox{char} }
 \newcommand{\cl}{\mbox{Cl} }

\def\rep{{\to\!\!\! -}}

\makeatletter

\@addtoreset{equation}{section}
\makeatother
\numberwithin{equation}{section}


\begin{document}

	\author{Zilong He}
	\address{Department of Mathematics,	Dongguan University of Technology, Dongguan 523808, China}
	\email{zilonghe@connect.hku.hk}

	\title[ ]{Arithmetic Springer theorem and n-universality under field extensions}
	\thanks{ }
	\subjclass[2010]{11E08, 11E20, 11E95}
	\date{\today}
	\keywords{Springer's theorem, lifting problems, $n$-universal quadratic forms}
	\begin{abstract}
		Based on BONGs theory, we prove the norm principle for integral  and relative integral spinor norms of quadratic forms over general dyadic local fields, respectively. By virtue of these results, we further establish the arithmetic version of Springer's theorem for indefinite quadratic forms. Moreover, we solve the lifting problems on $n$-universality over arbitrary local fields.
	\end{abstract}
	\maketitle
	
	\section{Introduction}
	  A classical theorem by Springer \cite{springer-surlesformes-1952} states that an anisotropic quadratic space over an algebraic number field remains anisotropic over any field extension of odd degree, which was recently extended to the semilocal ring case by Gille and Neher \cite{gille-neher-springerthm-2021}. Let $F$ be an algebraic number field and $E$ a finite extension of $F$. Then Springer's result can be formally rephrased with respect to representations of quadratic spaces.
	\begin{Thm}[Springer Theorem]
		 Let $V$ and $U$ be quadratic spaces over $F$. Suppose that $[E:F]$ is odd. If $V\otimes_{F} E$ represents  $U\otimes_{F} E$, then $V$ represents $U$. 
	\end{Thm}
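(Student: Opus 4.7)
The plan is to deduce this representation-theoretic reformulation as a purely formal consequence of the classical anisotropic version of Springer's theorem by translating the hypothesis into a statement about Witt indices. Without loss of generality we may assume $V$ and $U$ are regular quadratic spaces. Form the orthogonal sum $W:=V\perp(-U)$ and write its Witt decomposition over $F$ as
\[
W \;\cong\; W_{\mathrm{an}} \perp \mathbb{H}^{r},
\]
where $W_{\mathrm{an}}$ is the anisotropic kernel and $\mathbb{H}$ is the hyperbolic plane. A standard application of Witt cancellation shows that $V$ represents $U$ over $F$ if and only if the Witt index $r$ of $W$ satisfies $r\geq \dim U$; the analogous equivalence holds over $E$.

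Next, I would invoke the classical Springer theorem in its anisotropic formulation, applied to $W_{\mathrm{an}}$: since $[E:F]$ is odd, the form $W_{\mathrm{an}}\otimes_{F} E$ remains anisotropic. Consequently the decomposition
\[
W\otimes_{F} E \;\cong\; (W_{\mathrm{an}}\otimes_{F} E) \perp \mathbb{H}^{r}
\]
is already the Witt decomposition of $W\otimes_{F} E$, so the Witt index of $W$ over $E$ equals its Witt index over $F$, namely $r$. In other words, extending scalars along an odd-degree extension cannot enlarge the Witt index.

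Finally, the hypothesis that $V\otimes_{F}E$ represents $U\otimes_{F}E$ is equivalent to the inequality $r\geq \dim U$ computed over $E$, which by the preceding step also holds over $F$; hence $V$ represents $U$. No obstacle of substance arises: the argument is algebraic, works over any field of characteristic $\neq 2$, and uses nothing beyond Witt cancellation together with the anisotropic Springer theorem cited at the beginning of the excerpt. The assumption that $F$ is a number field plays no role in this rephrasing; its significance will instead appear in the integral (arithmetic) version of the theorem developed later in the paper, where the passage from local data to a global representation requires the norm principles established via BONGs theory.
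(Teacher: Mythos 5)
Your argument is correct: the equivalence ``$V$ represents $U$ iff the Witt index of $V\perp(-U)$ is at least $\dim U$'' (via Witt cancellation) together with the anisotropy of $W_{\mathrm{an}}\otimes_F E$ under an odd-degree extension is exactly the standard way to pass from Springer's classical anisotropic statement to the representation form. The paper gives no proof of this theorem---it simply cites Springer's original result and asserts the rephrasing---so your write-up supplies precisely the intended derivation.
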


The question on the behaviour of the genus of a positive definite integral quadratic form when lifted to a totally real number field, was proposed by Ankeny \cite{earnest-hisa-springer-1975}; similar problems on spinor genus was studied by Earnest and Hsia. In a series of papers \cite{earnest-hisa_spinorgeneraI-1977,earnest-hisa_spinorgeneraII-1978}, they proved Springer-type theorems for spinor equivalence under certain conditions (also see \cite{earnest_thesis_1975}). In \cite{xu_springer_1999}, Xu further conjectured that the arithmetic version of Springer Theorem holds for indefinite quadratic forms over the ring of integers $\mathcal{O}_{F}$ (cf. Theorem \ref{thm:AST}) and proved the case when $2$ is unramified over $F$. However, such conjecture does not hold in general for positive definite forms when lifted to a number field $E$ that is not totally real (cf. \cite[Chap 7]{kitaoka-arithmetic} and \cite[\S 7]{xu_springer_1999}). When $E$ is totally real, Daans, Kala, Kr\'{a}sensk\'{y} and Yatsyna \cite{dkky_Springer-2024} also found that it may fail infinitely often.
 
 The proof of Arithmetic Springer Theorem for isometries (resp. representations), i.e., Theorem \ref{thm:AST}(i) (resp. (ii)) can be reduced to two key parts (I1) and (I2) (resp. (R1) and (R2)) in non-archimedean local fields (also see \cite[Introduction]{beli_representations_2019}):

      \begin{itemize}
      	    
      	\item[(I1)] Norm principle for integral spinor norms.
      	
      	\item[(I2)] Equivalent conditions for isometries of quadratic forms.

      	\item[(R1)] Norm principle for relative integral spinor norms.
      	
      	\item[(R2)]  Equivalent conditions for representations of quadratic forms.

         \end{itemize}
   In non-dyadic local fields, O'Meara \cite{omeara_integral_1958} gave (I2) and (R2). Earnest and Hsia \cite{earnest-hisa_spinorgeneraI-1977} proved (I1) based on Kneser's work in \cite{kneser_indefiniter_1956}. To compute relative integral spinor norms, Hsia, Shao and Xu \cite{hsia_indefinite_1998} developed various reduction formulas, which Xu \cite{xu_springer_1999} used to show (R1). Therefore, it remains to address the dyadic cases. Many results described in terms of Jordan splittings have contributed to the calculation of integral and relative integral spinor norms, such as \cite{xu-chan_spinorgeneraI_2004,earnest-hisa_spinorormI-1975,hisa_spinorormI-1975,lv-xu-spinor-norma-2010,schulze-xu_representation_2004,xu-remark-1989,xu_integral-spinor-norm_1993,xu_integral-spinor-norm_1993-2,xu_integral-spinor-norm_1995,xu_minimal_2003,xu_spinorgeneraII_2005} and so on. On the other hand, O'Meara \cite{omeara_quadratic_1963} solved (I2)  for the general dyadic case. For (R2), O'Meara \cite{omeara_integral_1958} and Riehm  \cite{riehm_integral_1964} solved the 2-adic and modular cases, respectively. In general dyadic local fields, Koerner \cite{koerner_integral_1973} provided (I2) and (R2) for the binary case, but for higher dimensional cases, only some necessary conditions for (R2) were given by Xu \cite{xu_springer_1999}. Therefore, only partial results could be obtained in \cite{earnest-hisa_spinorgeneraI-1977,earnest-hisa_spinorgeneraII-1978,xu_springer_1999} for Springer-type theorems.
    
    The breakthrough in this problem involved describing quadratic forms through bases of norm generators (abbr. BONGs). By developing BONGs theory, Beli completed the calculation of integral spinor norms and relative spinor norms \cite{beli_thesis_2001,beli_integral_2003}, and proved the isometry and representation theorem \cite{beli_representations_2006, beli_Anew_2010, beli_representations_2019} on quadratic forms over arbitrary dyadic local fields. As a result, all the necessary ingredients for Arithmetic Springer Theorem have been gathered. 
     In this paper, we introduce a new way to tackle the difficulty in applying Beli's formulas so that we may treat different problems involving spinor norm groups in general dyadic fields. To be precise, we show the norm principle for different ``pieces" (Lemma \ref{lem:norm-principle-Ga} for $G(a)$), and then prove the norm principle for integral spinor norms (Theorem \ref{thm:norm-principle-M}) and relative integral spinor norms  (Theorem \ref{thm:norm-principle-M-N}) piece by piece, respectively. Finally, following the work of Beli \cite[Theorems 3.2 and 4.5]{beli_representations_2006}, Earnest and Hsia \cite[\S 1]{earnest-hisa_spinorgeneraI-1977} and Xu \cite[\S 1]{xu_springer_1999}, we prove Springer-type theorems for genera and proper spinor genera (Theorems \ref{thm:AST-genus} and \ref{thm:AST-spn}) in general algebraic number fields, thereby confirming Xu's conjecture, i.e.,
      \begin{thm}[Arithmetic Springer Theorem]\label{thm:AST}
     	Let $L$ and $N$ be indefinite $\mathcal{O}_{F}$-lattices and $ \ell=\rank L\ge \rank N$. Suppose that $[E:F]$ is odd and $\ell\ge 3$.
      \begin{enumerate}[itemindent=-0.5em,label=\rm (\roman*)]
     		 	\item If $L\otimes_{\mathcal{O}_{F}} \mathcal{O}_{E}$ is isometric to $ N\otimes_{\mathcal{O}_{F}} \mathcal{O}_{E}$, then $L$ is isometric to $N$.
     		
     		\item If $L\otimes_{\mathcal{O}_{F}} \mathcal{O}_{E}$ represents $N\otimes_{\mathcal{O}_{F}} \mathcal{O}_{E}$, then $L$ represents $N$.
     	\end{enumerate} 
     \end{thm}
     \begin{re}
     	  (i) Theorem \ref{thm:AST} for quadratic forms over general Dedekind domains was recently proved by Hu, Liu and Xu \cite{hlx_springer_2024} under some mild assumptions. 
     	  
     	  (ii) Theorem \ref{thm:AST} is not true in general when $\ell=2$, as shown in Example \ref{ex:binary}. Also, see \cite[Example 5.10]{hlx_springer_2024} for counterexamples in function field cases.
 \end{re}
Recently, there has been a lot of concern regarding the lifting problem for universal positive definite quadratic forms, as discussed in \cite{ kala-number-field-2021, kala-universal-2023,kala-yatsyna-lifting-2021, kala-yatsyna-Kitaoka-lifting-2023, xu_indefinite_2020}, and so on. It is natural to study the behaviour of $n$-universal indefinite quadratic forms or lattices over $F$ upon inflation to the extension field $E$. An $\mathcal{O}_{F}$-lattice $L$ is called \textit{integral} if its norm is contained in $\mathcal{O}_{F}$.

\begin{defn}\label{defn:n-uni}
	  Let $n$ be a positive integer and $L$ be an integral $\mathcal{O}_{F}$-lattice. 
	  
	    \begin{enumerate}[itemindent=-0.5em,label=\rm (\roman*)] 
	  	\item For non-archimedean primes $\pmid $, we say that $L_{\mathfrak{p}}$ is $n$-universal over $E_{\mathfrak{P}}$ if $L_{\mathfrak{p}}\otimes_{\mathcal{O}_{F_{\mathfrak{p}}}} \mathcal{O}_{E_{\mathfrak{P}}}$ represents every integral $\mathcal{O}_{E_{\mathfrak{P}}}$-lattice $N$ of rank $n$.
	  	
	  	\item We say that $L$ is $n$-universal over $E$ if $L$ is indefinite and $L\otimes_{\mathcal{O}_{F}} \mathcal{O}_{E}$ represents every integral $\mathcal{O}_{E}$-lattice $N$ of rank $n$ for which $L_{\mathfrak{P}}$ represents $N_{\mathfrak{P}} $ at all real primes $\mathfrak{P}$ of $E$.
	  \end{enumerate}

\end{defn}

As an immediate application of Theorem \ref{thm:AST}, we have the following corollary. 
  \begin{cor}\label{cor:globally-n-universal-extension-E-odd}
   	Let $L$ be an integral indefinite $\mathcal{O}_{F}$-lattice of $\rank \ge 3$. Suppose that $[E:F]$ is odd and $n\ge 1$. If $L$ is $n$-universal over $E$, then it is $n$-universal over $F$.
   \end{cor}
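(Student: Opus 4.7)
The plan is to derive this corollary as an immediate consequence of Theorem \ref{thm:AST}(i). Let $N$ be an arbitrary integral $\mathcal{O}_F$-lattice of rank $n$; the task is to verify that $L$ represents $N$. First I would observe that the base change $N \otimes_{\mathcal{O}_F} \mathcal{O}_E$ is an integral $\mathcal{O}_E$-lattice of rank $n$, since extension of scalars along $\mathcal{O}_F \hookrightarrow \mathcal{O}_E$ preserves both rank and integrality. By the hypothesis that $L$ is $n$-universal over $E$, the lattice $L \otimes_{\mathcal{O}_F} \mathcal{O}_E$ represents every integral $\mathcal{O}_E$-lattice of rank $n$, hence in particular it represents $N \otimes_{\mathcal{O}_F} \mathcal{O}_E$.

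With $L$ indefinite and $[E:F]$ odd, Theorem \ref{thm:AST}(i) then applies and yields that $L$ represents $N$. Since $N$ was arbitrary, it follows that $L$ is $n$-universal over $F$, as required. No further input from BONGs theory or the norm principles should be needed at this stage, because all the arithmetic content has already been absorbed into Theorem \ref{thm:AST}.

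The only point requiring attention, and the main (mild) obstacle, is the applicability of Theorem \ref{thm:AST}(i) in the case that $N$ itself happens to be definite, since the definition of $n$-universality allows $N$ to range over all integral rank-$n$ lattices (including definite ones when $F$ has real places). This is vacuous when $F$ is totally imaginary; otherwise the indefiniteness of $L \otimes_{\mathcal{O}_F} \mathcal{O}_E$ together with $n$-universality over $E$ forces the rank of $L$ to be sufficiently larger than $n$ that local-global arguments for indefinite forms, combined with the local Springer-type statements $(\mathrm{R}2)$ at each completion, dispatch the definite-$N$ case along the same lines.
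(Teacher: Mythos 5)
Your proof is correct and essentially identical to the paper's: base-change $N$ to $\mathcal{O}_{E}$, note it is integral of rank $n$, invoke $n$-universality over $E$ to get that $L\otimes_{\mathcal{O}_{F}}\mathcal{O}_{E}$ represents $N\otimes_{\mathcal{O}_{F}}\mathcal{O}_{E}$, and apply Theorem \ref{thm:AST}(i). The caveat you raise about definite $N$ (Theorem \ref{thm:AST} is stated for indefinite $N$) is a fair observation, but the paper's own one-line proof elides it in exactly the same way, so your argument matches the paper's.
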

   \begin{proof}
   	Suppose that $L$ is $n$-universal over $E$. Then $L\otimes_{\mathcal{O}_{F}} \mathcal{O}_{E}$ represents the $\mathcal{O}_{E}$-lattice $N\otimes_{\mathcal{O}_{F}} \mathcal{O}_{E}$ for any $\mathcal{O}_{F}$-lattice $N$. Since $[E:F]$ is odd, by Theorem \ref{thm:AST}(ii), $L$ represents $N$.
   \end{proof}

 We also consider lifting problems for $n$-universal lattices over non-archimedean local fields. Based on characterization of $n$-universal $\mathcal{O}_{F_{\mathfrak{p}}}$-lattices in \cite{beli_universal_2020,HeHu2,hhx_indefinite_2021,xu_indefinite_2020}, we provide the necessary and sufficient conditions for an $\mathcal{O}_{F_{\mathfrak{p}}}$-lattice to be $n$-universal over a finite extension $E_{\mathfrak{P}}$ of $F_{\mathfrak{p}}$ (Theorems \ref{thm:n-universal-nondyadic-overE}, \ref{thm:even-n-universaldyadic-overE}, \ref{thm:odd-n-universaldyadic-overE} and \ref{thm:1-universaldyadic-overE}). Interestingly, the oddness of $[E:F]$ is necessary for Arithmetic Springer Theorem (cf. \cite[Example]{hisa_spinorormI-1975} or \cite[Appendix A]{earnest-hisa_spinorgeneraI-1977}), but not for $n$-universality to be stable under field extensions, as seen from these conditions. Using those equivalent conditions, we weaken the oddness assumption of Corollary \ref{cor:globally-n-universal-extension-E-odd}:
 

	\begin{thm}\label{thm:globally-n-universal-extension-E}
			Let $L$ be an integral indefinite $\mathcal{O}_{F}$-lattice of rank $\ell$. Suppose that either $n\ge 3$, or $\ell\ge n+3=5$ and the class number of $F$ is odd. If $L$ is $n$-universal over $E$, then it is $n$-universal over $F$.
	\end{thm}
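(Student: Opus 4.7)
The plan is to verify that $L_{\mathfrak{p}}$ is locally $n$-universal at every prime $\mathfrak{p}\in\Omega_{F}$ and then to invoke the local-global principle for representations by indefinite $\mathcal{O}_{F}$-lattices of rank $\geq 3$ that underlies Theorem~\ref{thm:AST}(i). Once each $L_{\mathfrak{p}}$ is locally $n$-universal, any rank-$n$ integral $\mathcal{O}_{F}$-lattice $N$ is represented by $L$ at every completion, and Eichler's strong approximation for indefinite lattices (applicable since $\mathrm{rank}\,L\geq 2n\geq 6$) yields a global embedding $N\hookrightarrow L$.

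For a finite prime $\mathfrak{p}$ of $F$, pick any prime $\mathfrak{P}\in\Omega_{E}$ lying over $\mathfrak{p}$. The hypothesis that $L\otimes_{\mathcal{O}_{F}}\mathcal{O}_{E}$ is $n$-universal, restricted to the completion at $\mathfrak{P}$, shows that the lattice $L_{\mathfrak{p}}\otimes_{\mathcal{O}_{F_{\mathfrak{p}}}}\mathcal{O}_{E_{\mathfrak{P}}}$ is locally $n$-universal over $\mathcal{O}_{E_{\mathfrak{P}}}$. We then apply the appropriate one of Theorems~\ref{thm:n-universal-nondyadic-overE}, \ref{thm:even-n-universaldyadic-overE}, \ref{thm:odd-n-universaldyadic-overE}, and \ref{thm:1-universaldyadic-overE} to conclude that $L_{\mathfrak{p}}$ is itself $n$-universal over $\mathcal{O}_{F_{\mathfrak{p}}}$. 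For $n\geq 3$, the equivalent conditions furnished by these theorems are stable under arbitrary finite extensions of local fields, so no parity assumption on $[E_{\mathfrak{P}}:F_{\mathfrak{p}}]$ is required; this is the essential use of the bound $n\geq 3$, in contrast with the $n=1,2$ cases where parity intrudes.

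For an archimedean prime $\mathfrak{p}$ of $F$, complex primes extend only to complex primes in $E$, so the relevant rank condition transfers back automatically. At a real $\mathfrak{p}$ admitting a real extension $\mathfrak{P}\in\Omega_{E,\mathbb{R}}$, the completions $L_{\mathfrak{p}}$ and $L_{\mathfrak{P}}$ agree as $\mathbb{R}$-quadratic spaces, so the signature bound $(a,b)\geq(n,n)$ derived from $n$-universality over $E$ descends directly. The realizability hypothesis is calibrated precisely to exclude the pathology in which $F$ has real primes while $E$ has none; combined with the indefiniteness of $L$, it guarantees that the archimedean data required at every real place of $F$ is recoverable from the corresponding information over $E$.

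The main obstacle I expect is the finite-prime step. The $n$-universality criteria of Theorems~\ref{thm:n-universal-nondyadic-overE}--\ref{thm:1-universaldyadic-overE} are formulated via BONGs-theoretic invariants (Jordan chains, norm groups, and relative spinor data), and showing that these invariants descend from $\mathcal{O}_{E_{\mathfrak{P}}}$ to $\mathcal{O}_{F_{\mathfrak{p}}}$ for arbitrary ramification and residue-degree, with no constraint on $[E_{\mathfrak{P}}:F_{\mathfrak{p}}]$, is the technical heart of the argument; this is precisely the content of those already-established local theorems. Once the finite-prime step is in hand, the archimedean check and the final global assembly via local-global are essentially formal.
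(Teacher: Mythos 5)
Your overall strategy coincides with the paper's: reduce to local $n$-universality at every place via the local--global principle of \cite[Theorem 1.1(1)]{hhx_indefinite_2021}, settle the finite places by the lifting theorems (Theorems \ref{thm:n-universal-nondyadic-overE}, \ref{thm:even-n-universaldyadic-overE}, \ref{thm:odd-n-universaldyadic-overE}), whose conditions for $n\ge 3$ are stable under arbitrary finite extensions, and then treat the archimedean places. The finite-place step is fine and is exactly Corollaries \ref{cor:non-dyadic-n-universal-overE-implies-overF} and \ref{cor:dyadic-n-universal-overE-implies-overF}.

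The gap is at the real places. Realizability only guarantees that \emph{some} real prime $\mathfrak{p}\in\Omega_{F,\mathbb{R}}$ has a real extension $\mathfrak{P}\in\Omega_{E,\mathbb{R}}$; it does not say that every real prime of $F$ does. If $\mathfrak{q}\in\Omega_{F,\mathbb{R}}$ is a real prime all of whose extensions to $E$ are complex, then $n$-universality of $L\otimes\mathcal{O}_{E}$ at those complex places only yields $\dim F_{\mathfrak{q}}L_{\mathfrak{q}}\ge n$, not the required condition $\ind F_{\mathfrak{q}}L_{\mathfrak{q}}\ge n$. Your sentence asserting that realizability plus indefiniteness makes ``the archimedean data required at every real place of $F$ recoverable'' is precisely the missing step, not a proof of it. The paper closes this by first using the one guaranteed pair $\mathfrak{P}\mid\mathfrak{p}$ of real places (Lemma \ref{lem:overEimpliesoverF}(ii)) to get $\ind F_{\mathfrak{p}}L_{\mathfrak{p}}\ge n$, and then invoking \cite[Theorem 2.3]{hhx_indefinite_2021} to transfer this bound to every other real prime $\mathfrak{q}$ of $F$; without some such transfer argument the proof does not go through. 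A second, more minor point: your appeal to strong approximation ``since $\rank L\ge 2n\ge 6$'' is both unjustified (nothing in the hypotheses gives $\rank L\ge 2n$) and unnecessary, since the global assembly is already packaged in the cited local--global theorem.
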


%
%
%
%

		%
		%
	
		%

 
   The remaining sections are organized as follows. In Section \ref{sec:BONGs}, we review Beli's BONGs theory and study the spinor norms and invariants of quadratic lattices in dyadic local fields. In Section \ref{sec:lifting}, we analyze the invariants and the equivalent conditions on the representation of quadratic lattices between ground fields and extension fields. Then we prove the norm principles for spinor norms in Section \ref{sec:norm-principle} and Theorem \ref{thm:AST} in Section \ref{sec:AST}. At last, we study $n$-universal lattices over local fields under field extensions and show Theorem \ref{thm:globally-n-universal-extension-E}.
   
    \medskip
    
    Before proceeding further, we introduce some settings for discussion in various sections. Any unexplained notation or definition can be found in \cite{omeara_quadratic_1963}.
    
    \noindent \textbf{General settings} 
  
    Let $ F $ be an algebraic number field or a non-archimedean local field with $  \chartic\, F\not=2 $, $ \mathcal{O}_{F} $ the ring of integers of $ F $ and $ \mathcal{O}_{F}^{\times}$ the group of units. For a non-degenerate quadratic space $V$ over $F$, let $L$ be an $\mathcal{O}_{F}$-lattice in $V$ and write $FL$ for the subspace of $V$ spanned by $L$ over $F$. For an $ \mathcal{O}_{F} $-lattice $ L $, we denote the scale, norm and volume of $L$ by $ \mathfrak{s}(L) $, $ \mathfrak{n}(L) $ and $\mathfrak{v}(L)$, respectively.
    We also denote by $O^{+}(V)$  the special orthogonal group of $V$. If $V=FL$, write
    \begin{align*}
    	 O^{+}(L)\coloneqq&\{\sigma\in O^{+}(V)\mid \sigma(L)=L\},\\
    	 O^{\prime}(V)\coloneqq&\{\sigma\in O^{+}(V)\mid \theta (\sigma) =1\},
    \end{align*}
     where $\theta$ denotes the spinor norm (cf. \cite[\S 55]{omeara_quadratic_1963}). Furthermore, the notations $\lfloor \cdot\rfloor$ and $\lceil \cdot \rceil$ denote the usual floor and ceiling function, respectively.
     
     Let $E$ be a finite extension of $F$. We write $N_{E/F}$ for the norm from $E$ to $F$. We usually add the tilde symbol ``$\;\widetilde{}\;$" for the object (e.g. a map, a function, a group or a lattice) of $F$ considered in $E$. For $c\in F^{\times}$, we also use $\widetilde{c}$ to emphasize that it is an element of $E^{\times}$, but omit the tilde symbol, when our context is clear. For instance, a map or a function $\widetilde{\mathcal{F}}$ defined on a subset of $E^{\times}$ acts on $\widetilde{c}$, we use $\widetilde{\mathcal{F}}(c)$ instead of $\widetilde{\mathcal{F}}(\tilde{c})$ for convenience.
    \medskip
    
   \noindent	\textbf{Local settings}
    
    When $ F $ is a non-archimedean local field, write $ \mathfrak{p} $ for the maximal ideal of $\mathcal{O}_{F}$ and $ \pi \in\mathfrak{p}$ for a uniformizer. For $c\in F^{\times}\coloneqq F\backslash\{0\}$, we denote by $\ord(c)=\ord_{\mathfrak{p}}(c)$ the \textit{order} of $c$ and put $\ord(0)=\infty$. Set $ e=e_{F}\coloneqq\ord(2)$. For a fractional or zero ideal $\mathfrak{c}$ of $F$, we put $\ord(\mathfrak{c})=\min\{\ord(c)\,|\,c\in \mathfrak{c}\}$. As usual, we denote by $\Delta=\Delta_{F}$ a fixed unit such that $F(\sqrt{\Delta_{F}})$ is an unramified quadratic extension of $F$. If $F$ is non-dyadic, then $\Delta_{F}$ is an arbitrary non-square unit. If $F$ is dyadic, then $\Delta_{F}$ is a non-square unit of the form $\Delta_{F}=1-4\rho_{F}$.
    
    When $F$ is dyadic, we define the \textit{quadratic defect} of $ c $ by $ \mathfrak{d}(c)=\mathfrak{d}_{F}(c)\coloneqq\bigcap_{x\in F}(c-x^{2})\mathcal{O}_{F}$ and the \textit{order of relative quadratic defect} by the map 
    \begin{align*}
    	d=d_{F}\colon F^{\times}/F^{\times 2} \to \mathbb{N}\cup \{\infty\},\quad c\mapsto\ord (c^{-1}\mathfrak{d}(c)).
    \end{align*}

    For $c\in F^{\times}$ and $a,b\in F$, we write $c A(a,b)$ for the binary lattice with the Gram matrix $c\begin{pmatrix}
    	a  &  1\\
    	1  &  b
    \end{pmatrix}$, and then write $\mathbf{H}_{F}$ and $\mathbf{A}_{F}$ for the binary $\mathcal{O}_{F}$-lattices
   \begin{align*}
   	\mathbf{H}_{F}= 
   	\begin{cases}
   		\langle 1,-1\rangle &\text{if $F$ is non-dyadic}, \\
   		2^{-1}A(0,0)         &\text{if $F$ is  dyadic},
   	\end{cases}\quad\text{and}\quad
   	\mathbf{A}_{F}=
   	\begin{cases}
   		\langle 1,-\Delta_{F}\rangle  &\text{if $F$ is non-dyadic},\\
   		2^{-1}A(2,2\rho_{F})   &\text{if $F$ is dyadic}.
   	\end{cases}
   \end{align*}  
   And denote by $\mathbb{H}_{F}$ (resp. $\mathbb{A}_{F}$) the quadratic space spanned by $\mathbf{H}_{F}$ (resp. $\mathbf{A}_{F}$) over $F$. 
	
	\medskip
 \noindent	\textbf{Global settings}
	
	When $F$ is an algebraic number field, we denote by $\Omega_{F}$ the set of primes (or places) of $F$ and by $\infty_{F}$ the set of infinite primes. Let $L$ be an $\mathcal{O}_{F}$-lattice on a quadratic space over $F$. For $\mathfrak{p}\in \Omega_{F}$, let $F_{\mathfrak{p}}$ be the completion of $F$ at $\mathfrak{p}$ and $V_{\mathfrak{p}}=V\otimes F_{\mathfrak{p}}$. Then put $L_{\mathfrak{p}}= L\otimes\mathcal{O}_{F_{\mathfrak{p}}}$ for $\mathfrak{p}\in \Omega_{F}\backslash \infty_{F}$ and $L_{\mathfrak{p}}=V_{\mathfrak{p}}$, otherwise.
	
	Let $V=FL$. We denote by $O_{\mathbb{A}} (V)$ the adelic group of  $O^{+}(V)$ and write
	\begin{align*}
				O_{\mathbb{A}}^{+}(L)&\coloneqq\{\sigma_{\mathbb{A}}\in O_{\mathbb{A}} (V)\mid \sigma(L)=L\}, \\
		O_{\mathbb{A}}^{\prime}(V)&\coloneqq\{\sigma_{\mathbb{A}}\in O_{\mathbb{A}}(V)\mid \sigma_{\mathfrak{p}}\in O^{\prime}(V_{\mathfrak{p}})\;\text{for each $\mathfrak{p}\in \Omega_{F}$}\}.
	\end{align*}
	For an $\mathcal{O}_{F}$-lattice  $N\subseteq L$, we put $
		X_{\mathbb{A}}(L/N)\coloneqq\{\sigma_{\mathbb{A}}\in O_{\mathbb{A}} (V)\mid N\subseteq \sigma(L)\}$.
	
	 As \cite[\S 102]{omeara_quadratic_1963}, we write $\gen(L)$, $\spn^{+}(L)$ and $\cls^{+}(L)$ for the genus, proper genus and proper class of $L$, respectively. And we say that $\gen(L)$ \textit{represents} an $\mathcal{O}_{F}$-lattice $N$ if there is a lattice $M\in \gen(L)$ such that $N\subseteq M$. Similarly for the definition of the representation of an $\mathcal{O}_{F}$-lattice by $\spn^{+}(L)$ or $\cls^{+}(L)$.
	
\section{Spinor norms and invariants in terms of BONGs}\label{sec:BONGs}
In this section, we summarize and analyze the key ingredients of the BONGs theory of quadratic forms in dyadic local fields, established by Beli in a series of papers \cite{beli_thesis_2001,beli_integral_2003,beli_representations_2006,beli_Anew_2010,beli_representations_2019,beli_universal_2020}, where any unexplained notation and definition can be found.

Throughout the section, we assume $F$ to be dyadic, i.e., $e\ge 1$. Let $f=[\mathcal{O}_{F}/\mathfrak{p}: \mathbb{Z}_{2}/(p)]$ and $N\mathfrak{p}=|\mathcal{O}_{F}/\mathfrak{p}|$. Let $L$ be a binary $\mathcal{O}_{F}$-lattice, with $\mathfrak{n}(L)=a\mathcal{O}_{F}$ and $a\in F^{\times}$. Then we denote $a(L)\in F^{\times}/\mathcal{O}_{F}^{\times 2}$ by $a(L)\coloneqq a^{-2}\det L$ and denote  $\mathscr{A}=\mathscr{A}_{F}\subseteq F^{\times}/\mathcal{O}_{F}^{\times 2}$ by the set of all possible values of $a(L)$. From \cite[Lemma 3.5]{beli_integral_2003}, we have
\begin{align*}
	\mathscr{A}=\{a\in F^{\times}\mid 4a\in  \mathcal{O}_{F}\;\mbox{and}\;\mathfrak{d}(-a)\subseteq \mathcal{O}_{F}\},
\end{align*}
which implies that
\begin{align}\label{a-in-mathscrA}
	a\in \mathscr{A} \quad\Longleftrightarrow\quad 
	\begin{cases}
	    \ord(a)+2e\ge 0,  \\
	    \ord(a)+d(-a)\ge 0.
	\end{cases}
\end{align}
If $a\in \mathscr{A}$, we further define $\alpha(a)\coloneqq\min\{\ord(a)/2+e,\ord(a)+d(-a)\}$. Thus, from \eqref{a-in-mathscrA}, 
\begin{align}\label{alpha-A-equiv}
	a\in \mathscr{A} \quad\text{if and only if}\quad\alpha(a)\ge 0.
\end{align}
 We denote by $\mathscr{H}=\mathscr{H}_{F}\coloneqq(-1/4)\mathcal{O}_{F}^{\times 2}$, which is an element of $\mathscr{A}$. Clearly, $a\in \mathscr{H}$ if and only if  $\ord(a)=-2e$ and $d(-a)=\infty$.

For abelian groups $H_{1}$ and $H_{2}$ with $H_{1}\subseteq H_{2}$, we denote by $\sgp(H_{2}/H_{1})$ the set of all subgroups $H$ of $H_{2}/H_{1}$. Note that there is one-to-one correspondence of subgroups between the sets $\{H\mid H\subseteq H_{2}/H_{1}\}$ and $\{H\mid H_{1}\subseteq H\subseteq H_{2}\}$. Thus, in the sequel, we also identify $\sgp(H_{2}/H_{1})$ as the set of all subgroups $H$ such that $H_{1}\subseteq H\subseteq H_{2}$. Integral spinor norm groups of binary lattices have been determined in \cite{earnest-hisa_spinorgeneraII-1978,hisa_spinorormI-1975,xu-remark-1989,xu_integral-spinor-norm_1993}. To unify various cases, Beli introduced the functions $G:F^{\times}/\mathcal{O}_{F}^{\times 2}\to \sgp(F^{\times}/F^{\times 2})$ and $g:\mathscr{A}_{F}\to \sgp(\mathcal{O}_{F}^{\times}/\mathcal{O}_{F}^{\times 2})$ (cf. \cite[Definitions 4 and 6]{beli_integral_2003}). In fact, he showed that  $\theta(L)=G(a(L))$ for any binary lattice $L$, where $\theta(L)$ denotes the integral spinor norm group of $L$ (cf. \cite[Lemma 3.7]{beli_integral_2003}), and provided a concise formula for $g$ (cf. \cite[Lemma 5.1]{beli_Anew_2010}).
\begin{lem}\label{lem:ga-formula}
	If $a\in \mathscr{A}$, then $g(a)=(1+\mathfrak{p}^{\alpha(a)})\mathcal{O}_{F}^{\times 2}\cap N(-a)$.
\end{lem}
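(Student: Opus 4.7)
My plan is to interpret $g(a)$ as the unit part of the spinor norm group of $O^+(L)$ for an explicit binary lattice $L$ with $a(L)=a$, and then to match this spinor norm with the right-hand side by two inclusions: $N(-a)$ comes from the general description of $O^+$ of a binary space via a quadratic \'etale extension, while the congruence modulo $\mathfrak{p}^{\alpha(a)}$ comes from the integrality constraint of preserving $L$.

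First I would fix a model of $L$ realizing $a$: since $a\in\mathscr{A}$, there is a binary $\mathcal{O}_F$-lattice $L$ with $\mathfrak{n}(L)=a\mathcal{O}_F$ (after scaling) and $a(L)=a$ in $F^\times/\mathcal{O}_F^{\times 2}$. The rational binary quadratic space $V=FL$ then has discriminant $-a$ up to squares, so it is similar to the norm form of the \'etale quadratic extension $K:=F(\sqrt{-a})$. Via this identification, $O^+(V)\simeq K^\times/F^\times$ and the spinor norm $\theta$ coincides with the norm map $N_{K/F}$ modulo $F^{\times 2}$. In particular $\theta(O^+(V))=N(-a)\cdot F^{\times 2}/F^{\times 2}$, which already forces $g(a)\subseteq N(-a)$ and explains the presence of the factor $N(-a)$ on the right-hand side.

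The core of the proof is then to characterize which $\xi\in K^\times/F^\times$ correspond to rotations that stabilize the specific lattice $L$. Using an explicit basis coming from the BONG representation of $L$, a rotation $\sigma_\xi$ attached to $\xi=x+y\sqrt{-a}$ preserves $L$ if and only if suitable ratios of $x$ and $y$ land in $\mathcal{O}_F$ together with appropriate trace conditions. Translating this into the norm $N_{K/F}(\xi)=x^2+ay^2$, the lattice-preservation condition becomes exactly a congruence of $N_{K/F}(\xi)$ modulo $\mathfrak{p}^{\alpha(a)}$, with the exponent $\alpha(a)=\min\{\ord(a)/2+e,\,\ord(a)+d(-a)\}$ arising as the tighter of two bounds: one from the ``$x^2$''-side (controlled by the doubling phenomenon in dyadics, giving $\ord(a)/2+e$), and one from the ``$ay^2$''-side (controlled by the size of the quadratic defect $d(-a)$). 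This yields the forward inclusion $g(a)\subseteq(1+\mathfrak{p}^{\alpha(a)})\mathcal{O}_F^{\times 2}\cap N(-a)$.

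For the reverse inclusion, given a unit $u\equiv N_{K/F}(\xi_0)\bmod F^{\times 2}$ with $u\in 1+\mathfrak{p}^{\alpha(a)}$, I would adjust $\xi_0$ by an element of $F^\times$ to bring its coordinates into the range dictated by the integrality analysis above, producing a $\xi$ with $\sigma_\xi\in O^+(L)$ and $\theta(\sigma_\xi)\equiv u\bmod \mathcal{O}_F^{\times 2}$. The main technical obstacle is the case analysis keyed on which term in $\min\{\ord(a)/2+e,\ord(a)+d(-a)\}$ is active, corresponding roughly to whether $K/F$ is unramified, tamely ramified, or wildly ramified; verifying that the congruence threshold is sharp in each regime is delicate. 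I would organize this via separate checks according to whether $d(-a)\le e-\ord(a)/2$ or $d(-a)\ge e-\ord(a)/2$, using \eqref{a-in-mathscrA} to control the admissible range of $(\ord(a),d(-a))$, and then confirm the sharpness by exhibiting, in each regime, a class in $(1+\mathfrak{p}^{\alpha(a)})\mathcal{O}_F^{\times 2}\cap N(-a)$ realized as a genuine spinor norm.
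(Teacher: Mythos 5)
Your proposal does not engage with what $g(a)$ actually \emph{is}. In this paper (and in Beli's work it cites), $g$ is introduced by an explicit case-by-case recipe, namely \cite[Definition 6]{beli_integral_2003}; the content of the quoted lemma (Beli's Lemma 5.1) is precisely that this combinatorial definition collapses to the closed formula $(1+\mathfrak{p}^{\alpha(a)})\mathcal{O}_F^{\times 2}\cap N(-a)$, and Beli proves it by comparing the two descriptions case by case using properties of the quadratic defect and the Hilbert symbol. You instead \emph{redefine} $g(a)$ as ``the unit part of the spinor norm group of $O^{+}(L)$ for a binary lattice $L$ with $a(L)=a$.'' That identification is not the definition, and it is in fact false in general: by Proposition \ref{prop:G-g} of this paper, the spinor norm of such an $L$ is $G(a)$, whose unit part is $\langle a\rangle g(b)$, $g(c)$, or $N(-a)\cap\mathcal{O}_F^{\times}F^{\times 2}$ with \emph{shifted} arguments $b=\pi^{-2e}a$ or $c=\pi^{-2\lfloor e/2-R/4\rfloor}\varepsilon$ --- not $g(a)$ itself. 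Since $\alpha(b)$ and $\alpha(c)$ generally differ from $\alpha(a)$, the group you would compute by your method is a different group from the one in the statement. A proof that starts from the wrong object cannot be repaired by the subsequent analysis.

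Even granting the (incorrect) starting point, the two substantive steps are asserted rather than proved. The claim that ``the lattice-preservation condition becomes exactly a congruence of $N_{K/F}(\xi)$ modulo $\mathfrak{p}^{\alpha(a)}$'' is the entire content of the lemma; nothing in the proposal derives the specific exponent $\min\{\ord(a)/2+e,\ \ord(a)+d(-a)\}$ from the stabilizer condition, and the dyadic case analysis (unramified/ramified/wildly ramified $F(\sqrt{-a})/F$, the boundary $d(-a)=e-\ord(a)/2$, the degenerate value $a\in(-1/4)\mathcal{O}_F^{\times 2}$) is exactly where all the difficulty lies. Moreover, the forward and reverse inclusions are not symmetric as you suggest: the stabilizer is a subgroup of $K^{\times}/F^{\times}$, and one must show its \emph{image} under the norm map fills out all of $(1+\mathfrak{p}^{\alpha(a)})\mathcal{O}_F^{\times 2}\cap N(-a)$, which you acknowledge as ``delicate'' but leave entirely open. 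For the record, the paper under review offers no proof of this lemma at all --- it is imported verbatim from \cite[Lemma 5.1]{beli_Anew_2010} --- so the correct route here is either to cite Beli or to reproduce his comparison of Definition 6 with the closed formula.
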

To clarify the formula in Lemma \ref{lem:ga-formula}, we follow the setting in \cite[\S 1]{beli_integral_2003}. Recall that the Hilbert symbol $(\;,\;)_{\mathfrak{p}}:F^{\times}/F^{\times 2}\times F^{\times}/F^{\times 2}$ is a non-degenerate symmetric bilinear form. Then, for any $H\in \sgp(F^{\times}/F^{\times 2})$, we denote by $H^{\perp}$ its orthogonal complement of $H$ with respect to $(\;,\;)_{\mathfrak{p}}$. Thus, $H^{\perp}=\{c\in F^{\times}\mid (c,a)_{\mathfrak{p}}=1\;\text{for any $a\in H$}\}$. For $a\in F^{\times}$, put 
\begin{align}\label{Na}
	N(a)\coloneqq N(F(\sqrt{a})/F)=\{c\in F^{\times}\mid (c,a)_{\mathfrak{p}}=1\}=(\langle a\rangle F^{\times 2})^{\perp},
\end{align}
where $\langle a\rangle F^{\times 2}\coloneqq\cup_{k\in \mathbb{Z}} a^{k}F^{\times 2}=aF^{\times 2}\cup F^{\times 2}$.
For $h\in \mathbb{R}\cup \{\infty\}$, also put
\begin{align}\label{eq:1+pk}
	(1+\mathfrak{p}^{h})F^{\times 2}\coloneqq\{a\in F^{\times}\mid d(a)\ge h\} 
\end{align}
and $	(1+\mathfrak{p}^{h})\mathcal{O}_{F}^{\times 2}\coloneqq(1+\mathfrak{p}^{h})F^{\times 2}\cap \mathcal{O}_{F}^{\times}$. Furthermore, formally put $(1+\mathfrak{p}^{0})F^{\times 2}=F^{\times}$, $(1+\mathfrak{p}^{\infty})F^{\times 2}=F^{\times 2}$, $(1+\mathfrak{p}^{0})\mathcal{O}_{F}^{\times 2}=\mathcal{O}_{F}^{\times}$ and $(1+\mathfrak{p}^{\infty})\mathcal{O}_{F}^{\times 2}=\mathcal{O}_{F}^{\times 2}$.

\begin{prop}\label{prop:1+p}
	Let $h\in \mathbb{R}$.
	\begin{enumerate}[itemindent=-0.5em,label=\rm (\roman*)]
		\item  If $h\in \mathbb{Z}$ and $0<h<2e$ is even, then $(1+\mathfrak{p}^{h})F^{\times 2}=(1+\mathfrak{p}^{h+1})F^{\times 2}$.
		
		\item If  $h\in \mathbb{Z}$ and $1<h<2e+1$ is odd, then $((1+\mathfrak{p}^{h})F^{\times 2})^{\perp}=(1+\mathfrak{p}^{2e+2-h})F^{\times 2}$.
		
		\item If  $h<0  $, then $(1+\mathfrak{p}^{h})F^{\times 2}=(1+\mathfrak{p}^{0})F^{\times 2}=F^{\times}$.

		\item Let $s,t$ be two consecutive elements in $d(F^{\times})=\{0,1,\ldots,2e-1,2e,\infty\}$. If $s<h\le t$, then $(1+\mathfrak{p}^{h})F^{\times 2}=(1+\mathfrak{p}^{t})F^{\times 2}$.
\end{enumerate}
\end{prop}
\begin{proof}
	Assertions (i), (iii) and (iv)  are straightforward from \eqref{eq:1+pk}. Assertion (ii) is the third assertion in \cite[Lemma 1.2(i)]{beli_integral_2003}.
\end{proof}
\begin{prop}\label{prop:hsharp}
	Let $h\in \mathbb{Z}$ and $0\le h\le 2e$, or $h=\infty$. Set
	\begin{align*}
		h^{\#}\coloneqq\begin{cases}
			  \infty    &\text{if $h=0$}, \\
			 2e    &\text{if $h=1$}, \\
			 2e+2-h  &\text{if $1<h<2e$ is odd}, \\
			 2e+1-h    &\text{if $1<h<2e$ is even}, \\
			 1   &\text{if $h=2e$},  \\
			 0   &\text{if $h=\infty$}.
		\end{cases}
	\end{align*}
	Then $h^{\#}\in d(F^{\times})$, $h+h^{\#}\ge 2e+1$ and $((1+\mathfrak{p}^{h^{\#}})F^{\times 2})^{\perp}=(1+\mathfrak{p}^{h})F^{\times 2}$.
	 \end{prop}
	\begin{proof}
   	It is clear from definition that $h^{\#}\in d(F^{\times})$ and $h+h^{\#}\ge 2e+1$. 
	
	For $h \in \{0,1,2e,\infty\}$, the equality holds by the first two assertions in \cite[Lemma 1.2(i)]{beli_integral_2003}. 
	
	Suppose that $1<h<2e$. Then $1<h^{\#}<2e$ is odd. If $ h $ is odd, then the equality holds by Proposition \ref{prop:1+p}(ii).  If $ h  $ is even, then the equality holds by Proposition \ref{prop:1+p}(ii) and (i).
\end{proof}
\begin{prop}\label{prop:ga-formula-F2}
	Let $h\in \mathbb{R}$ and $c\in F^{\times}$. Then
	\begin{enumerate}[itemindent=-0.5em,label=\rm (\roman*)]
		\item 
		\begin{align*}
		(1+\mathfrak{p}^{h})\mathcal{O}_{F}^{\times 2}F^{\times 2}=\mathcal{O}_{F}^{\times}F^{\times 2}\cap (1+\mathfrak{p}^{h})F^{\times 2}=\begin{cases}
			(1+\mathfrak{p}^{h})F^{\times 2}  &\text{$h>0$},\\
			\mathcal{O}_{F}^{\times}F^{\times 2} &\text{$h\le 0$}.
		\end{cases}
		\end{align*}
		
		 \item $((1+\mathfrak{p}^{h})\mathcal{O}_{F}^{\times 2}\cap N(c) )F^{\times 2}=\mathcal{O}_{F}^{\times}F^{\times 2}\cap (1+\mathfrak{p}^{h})F^{\times 2}\cap N(c)$.
	\end{enumerate}
\end{prop}
\begin{proof}
	 (i) Let $h>0$. First, if $h\in \mathbb{Z}$, then, clearly, $((1+\mathfrak{p}^{h})\mathcal{O}_{F}^{\times 2})F^{\times 2}=(1+\mathfrak{p}^{h})F^{\times 2}$. Then for $h\in \mathbb{R}$, since $ (1+\mathfrak{p}^{h})\mathcal{O}_{F}^{\times 2} =(1+\mathfrak{p}^{\lceil h\rceil})\mathcal{O}_{F}^{\times 2}$ and $ (1+\mathfrak{p}^{h})F^{\times 2} =(1+\mathfrak{p}^{\lceil h\rceil})F^{\times 2}$, the equality follows by the case $h\in \mathbb{Z}$. We also have $(1+\mathfrak{p}^{h})F^{\times 2}\subseteq \mathcal{O}_{F}^{\times}F^{\times 2}$, so $(1+\mathfrak{p}^{h})F^{\times 2}=\mathcal{O}_{F}^{\times}F^{\times 2}\cap (1+\mathfrak{p}^{h})F^{\times 2}$.
	 
	 If $h\le 0$, then $(1+\mathfrak{p}^{h})\mathcal{O}_{F}^{\times 2}=\mathcal{O}_{F}^{\times}$ and $(1+\mathfrak{p}^{h})F^{\times 2}=F^{\times}$, so $(1+\mathfrak{p}^{h})\mathcal{O}_{F}^{\times 2}F^{\times 2}=\mathcal{O}_{F}^{\times}F^{\times 2}=\mathcal{O}_{F}^{\times}F^{\times 2}\cap (1+\mathfrak{p}^{h})F^{\times 2}$. 
	 
	 (ii) By an elementary argument, if $H_{1},H_{2},H_{3}$ are the subgroups of an abelian group $ G$ and $H_{3}\subseteq H_{2}$, then $(H_{1}\cap H_{2})H_{3}=H_{1}H_{3}\cap H_{2}$. Then the assertion follows by (i) and taking $G=F^{\times}$, $H_{1}=(1+\mathfrak{p}^{h})\mathcal{O}_{F}^{\times 2}$, $H_{2}=N(c)$ and $H_{3}=F^{\times 2}$.
	 \end{proof}

We need a modified version of Lemma \ref{lem:ga-formula}.
\begin{lem}\label{lem:ga-formula-F2}
	Let $a\in \mathscr{A}$. Then $g(a)F^{\times 2}=  \mathcal{O}_{F}^{\times}F^{\times 2}\cap (1+\mathfrak{p}^{\alpha(a)})F^{\times 2}\cap N(-a)$. Precisely,
	\begin{enumerate}[itemindent=-0.5em,label=\rm (\roman*)]
		\item If $a\in \mathscr{H}$, then $g(a)F^{\times 2}=\mathcal{O}_{F}^{\times}F^{\times 2}$.
		
		\item If $a\not\in \mathscr{H}$, then $g(a)F^{\times 2}=(1+\mathfrak{p}^{\alpha(a)})F^{\times 2}\cap N(-a)$.
	\end{enumerate}
\end{lem}
\begin{proof}
	The equality follows by Lemma \ref{lem:ga-formula} and Proposition \ref{prop:ga-formula-F2}(ii).
	
	(i) If $a\in \mathscr{H}$, then $\ord(a)=-2e$ and $d(-a)=\infty$. Hence $\alpha(a)=0$. So $g(a)F^{\times 2}=((1+\mathfrak{p}^{0})\mathcal{O}_{F}^{\times 2}\cap N(1))F^{\times 2}=\mathcal{O}_{F}^{\times}F^{\times 2} $.
	
	(ii) If $a\not\in \mathscr{H}$, since $a\in \mathscr{A}$, $\alpha(a)\ge 0$. Hence either $\alpha(a)\ge 1$, or $\alpha(a)=0$ and $d(-a)=2e$. In the former case, $\mathcal{O}_{F}^{\times}F^{\times 2}\cap(1+\mathfrak{p}^{\alpha(a)})F^{\times 2}=(1+\mathfrak{p}^{\alpha(a)})F^{\times 2} $; in the latter case, $N(-a)=N(\Delta) =\mathcal{O}_{F}^{\times}F^{\times 2}$. Hence $  \mathcal{O}_{F}^{\times}F^{\times 2}\cap (1+\mathfrak{p}^{\alpha(a)})F^{\times 2}\cap N(-a)=  (1+\mathfrak{p}^{\alpha(a)})F^{\times 2}\cap N(-a)$.
\end{proof}

In the remainder of this section, we let $a=\pi^{R}\varepsilon$ with $R\in \mathbb{Z}$ and $\varepsilon\in \mathcal{O}_{F}^{\times}$. Put $S\coloneqq e-R/2$,
\begin{align*}
	\mathscr{S}&=\mathscr{S}_{F}\coloneqq\{a\in \mathscr{A}\mid d(-a)>S\},\\
	\mathscr{S}^{i}&=\mathscr{S}_{F}^{i}\coloneqq\{a\in \mathscr{S}\mid R\le 2e\;\text{and}\;S\equiv i\pmod{2}\}.
\end{align*}
If $a\in \mathscr{H}$, then $R=\ord(a)=-2e$ and so $S=2e$. Hence $d(-a)=\infty>S$ and thus $a\in \mathscr{S}$. For $a\in \mathscr{S}$, we define
\begin{align}\label{phi-a}
	\phi_{a}=\phi(a)\coloneqq\begin{cases}
		a\pi^{-2e}  &\text{if $R>2e$},\\
		a\pi^{-R-2\lfloor S/2\rfloor}  &\text{if $R\le 2e$}, 
	\end{cases}
\end{align}
which is a map from $\mathscr{S}$ to $\mathscr{A}$, by Proposition \ref{prop:phia-a} below.
\begin{prop}\label{prop:phia-a}
	Let $a \in \mathscr{S}$.
	\begin{enumerate}[itemindent=-0.5em,label=\rm (\roman*)]
		\item  We have $\phi_{a}\in aF^{\times 2}$, $\phi_{a}\in \mathscr{A}$, $d(-\phi_{a})=d(-a)$ and $N(-\phi_{a})=N(-a)$.
		
		\item 	If $R>2e$, then $\alpha(\phi_{a})=\min\{R/2,R-2e+d(-a)\}$.
		
		\item If $R\le 2e$, then  $R$ is even, $S\in \mathbb{Z}\cap [0,2e]$ and $\alpha(\phi_{a})=\min\{e-\lfloor S/2\rfloor, d(-a)-2\lfloor S/2\rfloor\}$. 
	\end{enumerate}
\end{prop}

\begin{proof}
	If $R>2e$, then $b\coloneqq\phi_{a}=\pi^{R-2e}\varepsilon$. It follows that $b\in aF^{\times 2}$ and thus $d(-b)=d(-a)$ and $N(-b)=N(-a)$. From definition, $\alpha(b)=\min\{R/2,R-2e+d(-a)\}$. Since $R>2e$, $\alpha(b)\ge 0$. So, by \eqref{alpha-A-equiv}, $b\in \mathscr{A}$.

	If $R\le 2e$, then $c\coloneqq\phi_{a}=\pi^{-2\lfloor S/2\rfloor}\varepsilon$.  Since $a\in \mathscr{A}$, $R\ge -2e$. This combined with $R\le 2e$ shows that $0\le S\le 2e$. Hence $d(-a)>S\ge 0$ and thus $d(-a)\ge 1$. So $R=\ord(a)$ must be even and thus $S=e-R/2\in \mathbb{Z}$, as desired.
	
	From the parity of $R$, we have $c\in aF^{\times 2}$. Thus $d(-c)=d(-a)$ and $N(-c)=N(-a)$. 
	
	From definition, $\alpha(c)=\min\{e- \lfloor S/2\rfloor, d(-a)-2\lfloor S/2\rfloor\}$ is clearly an integer. Since $ 0\le S\le 2e$ and  $d(-a)>S$, we see that
	\begin{align*}
		e-\lfloor S/2\rfloor&\ge e-(2e/2)=0, \\
		d(-a)-2\lfloor S/2\rfloor&\ge S-2(S/2)=0.
	\end{align*}
	By \eqref{alpha-A-equiv},  $c\in \mathscr{A}$. Now, we have shown (i), (ii) and (iii).
\end{proof}
 From Proposition \ref{prop:phia-a}(i), if $a\in \mathscr{S}$, then $\phi_{a}\in \mathscr{A}$ and thus $g(\phi_{a}) $ is defined.
\begin{lem}\label{lem:a-in-gphia}
	Let $a\in \mathscr{S}$. Then
	 \begin{enumerate}[itemindent=-0.5em,label=\rm (\roman*)]
	 	\item  If $\ord(a)=-2e$ or $\ord(\phi_{a})=-2e$, then $\phi_{a}=a$. Thus $a\in \mathscr{H}$ if and only if $ \phi_{a}\in \mathscr{H}$.
	 	
	 	\item 	If $R\le 2e$, then $a\in g(\phi_{a})$. Thus $\langle a\rangle g(\phi_{a})F^{\times 2}=g(\phi_{a})F^{\times 2}$.
	 	
	 	\item If $a\in \mathscr{H}$, then $\langle a\rangle g(\phi_{a})F^{\times 2}=g(\phi_{a})F^{\times 2}= \mathcal{O}_{F}^{\times}F^{\times 2} $.
	 \end{enumerate}
\end{lem}
\begin{proof}
	(i) If $R=\ord(a)=-2e<2e$, then $S=e-R/2=2e$ and thus $\phi_{a}=a\pi^{-R-2\lfloor S/2\rfloor}=a\in \mathscr{H} $.  If $ \ord(\phi_{a})=-2e$, then $R-2e=-2e$ or $-2\lfloor S/2 \rfloor=-2e$, according as $R>2e$ or not. In the first case, we have $0=R>2e$, which is impossible. Thus, the second case must hold, implying $R\le 2e$ and $ S\ge 2\lfloor S/2\rfloor=2e$. By Proposition \ref{prop:phia-a}(iii), we also have $S\le 2e$. Hence $S=2e$, which yields $ R=-2e$. Therefore, $a=\phi_{a}\pi^{R+2\lfloor S/2\rfloor}=\phi_{a}$.
	
	 The equivalence follows immediately from this equality and the definition of $\mathscr{H}$.
	
	 (ii) Clearly, $a\in  N(-a)=N(-\phi_{a})$. By Lemma \ref{lem:ga-formula}, it suffices to show that $d(a)\ge \alpha(\phi_{a}) $. First, by Proposition \ref{prop:phia-a}(iii), we have $S\ge 0$. Also, note that $-1=1-2=1+\delta\pi^{e}$ for some $\delta\in \mathcal{O}_{F}^{\times}$ and thus $ d(-1)\ge e$. Hence
	 \begin{align*}
	 	d(-a)\ge d(-a)-S\ge \alpha(\phi_{a}) \quad \text{and}\quad d(-1)\ge e\ge e-\lfloor S/2\rfloor \ge \alpha(\phi_{a}).
	 \end{align*}
	 So, by the domination principle, $d(a)\ge \alpha(\phi_{a})$, as required.
	
	 (iii) If $a\in \mathscr{H}$, then $R=-2e<2e$. So, by (ii), (i) and Lemma \ref{lem:ga-formula-F2}(i), 
	 \begin{align*}
	 	\langle a\rangle g(\phi_{a})F^{\times 2}=g(\phi_{a})F^{\times 2}=g(a)F^{\times 2}=\mathcal{O}_{F}^{\times}F^{\times 2}.
	 \end{align*}
\end{proof}
To better compute the function $G$, we collect some results linking the functions $G$ and $g$, instead of clarifying its complicated definition.

\begin{prop}\label{prop:G-g}
	Let $a\in F^{\times}$.
	\begin{enumerate}[itemindent=-0.5em,label=\rm (\roman*)]
		\item  If $a\in \mathscr{S}$, then $G(a)=\langle a\rangle g(\phi_{a})F^{\times 2}$.
		In particular, 		
		\begin{enumerate}[itemindent=-0.5em,label=\rm (\roman*)]
			\item[(a)] if $R>4e$, then $G(a)=\langle a\rangle F^{\times 2}$;
			
			\item[(b)] if $R\le 2e$, then $G(a)=g(\phi_{a})F^{\times 2}$;
			
			\item[(c)] if $a\in \mathscr{H}$, then $G(a)=\mathcal{O}_{F}^{\times}F^{\times 2}$.
	 	\end{enumerate}  
	 	\item If $a\not\in \mathscr{S}$, then $G(a)=N(-a)$. 
	 	
	 	\item In all cases, $G(a)\subseteq N(-a)$.
	\end{enumerate} 
\end{prop}
\begin{proof}
	(i) If $a\in \mathscr{H}$, from \cite[Definition 4]{beli_integral_2003} and Lemma \ref{lem:a-in-gphia}(iii), $G(a)=\mathcal{O}_{F}^{\times}F^{\times 2}=\langle a\rangle g(\phi_{a})F^{\times 2}$.
	
	Assume that $a\in \mathscr{S}\backslash \mathscr{H}$. If $R>4e$, then $\phi_{a}=a\pi^{-2e}$ and thus $\ord(\phi_{a})=R-2e>2e$. Hence, from \cite[Definition 4(I)]{beli_integral_2003} and  \cite[Definition 6(I)]{beli_integral_2003}, $G(a)=\langle a\rangle F^{\times 2}=\langle a\rangle \mathcal{O}_{F}^{\times 2}F^{\times 2}=\langle a\rangle g(\phi_{a})F^{\times 2}$. If $2e<R\le 4e$, by \cite[p.\hskip 0.1cm 137 and Lemma 3.13(i)]{beli_integral_2003}, $G(a)=\langle a\rangle G^{'}(a)=\langle a\rangle g(\phi_{a})F^{\times 2}$. If $R\le 2e$,  by \cite[Lemma 3.13(ii)]{beli_integral_2003} and Lemma \ref{lem:a-in-gphia}(ii), $G(a)=g(\phi_{a})F^{\times 2}=\langle a\rangle g(\phi_{a})F^{\times 2}$. The proof of (i) is completed.
	
	 (ii) For $a\in F^{\times}\backslash \mathscr{A}$, see the first definition in \cite[Definition 4]{beli_integral_2003}; for $a\in \mathscr{A}\backslash \mathscr{S}$, see \cite[Definition 4(I) and (III)(iv)]{beli_integral_2003}. 
	 
	 (iii) By (ii), we may let $a\in \mathscr{S}$. Then, by (i), $G(a)= \langle a\rangle g(\phi_{a})F^{\times 2}$.
	 
	 For any $c\in  N(-a)$, we have  $(ac,-a)_{\mathfrak{p}}=(c,-a)_{\mathfrak{p}}=1$, so $ac\in N(-a)$. Thus $\langle a\rangle N(-a)\subseteq N(-a)$. Hence, by Lemma \ref{lem:ga-formula-F2} and Proposition \ref{prop:phia-a}(i), 
	\begin{align*}
		 \langle a\rangle g(\phi_{a})F^{\times 2}\subseteq \langle a\rangle N(-\phi_{a})=\langle a\rangle N(-a)\subseteq N(-a),
	\end{align*}
     as desired.
\end{proof}
 
We present two propositions to emphasize that the piece-by-piece approach to spinor norms would also be effective for other related topics, such as spinor exceptions and the class number of lattices, although these are beyond the topic of this paper.
 
\begin{prop}\label{prop:Ga-Norm}
	Let  $a,c\in F^{\times}$. 
	Then $G(a)\subseteq N(-c)$ if and only if the following conditions hold:
	\begin{enumerate}[itemindent=-0.5em,label=\rm (\roman*)]
		 \item If $a\in \mathscr{S}$, then 	$(a,-c)_{\mathfrak{p}}=1$ and one of the following conditions holds:
		 
		 	\begin{enumerate}[itemindent=-0.5em,label=\rm (\roman*)]
		 
		 \item[(a)] $a\in \mathscr{H}$ and $d(-c)\ge 2e$; 
		 
		 \item[(b)] $\max\{d(-c),d(ca)\}>2e-\alpha(\phi_{a})$.
					\end{enumerate}	
			\item If $a\not\in \mathscr{S}$, then $-c\in  F^{\times 2}\cup (-a)F^{\times 2}$.			
		\end{enumerate}	
\end{prop}
\begin{proof}
 	 (i) First, by Proposition \ref{prop:G-g}(i), $G(a)=\langle a\rangle g(\phi_{a})F^{\times 2}$. By dualization and \eqref{Na},  
 	 \begin{align}\label{equiv-a}
 	 	\langle a \rangle F^{\times 2} \subseteq N(-c) \Longleftrightarrow  -c\in (\langle a\rangle F^{\times 2} )^{\perp}=N(a) \Longleftrightarrow (-c,a)_{\mathfrak{p}}=1.
 	 \end{align}
 	 
 	 For (a), if $a\in \mathscr{H}$, by Lemma \ref{lem:a-in-gphia}(i), $\phi_{a}\in \mathscr{H}$. Hence, by Lemma \ref{lem:ga-formula-F2}(i), $g(\phi_{a})F^{\times 2}=\mathcal{O}_{F}^{\times}F^{\times 2}$. By \cite[Lemma 1.2(i) and (iii)]{beli_integral_2003} with $k=1$,
 	 \begin{align}\label{N-aOF}
 	 	\mathcal{O}_{F}^{\times }F^{\times 2}= (1+\mathfrak{p})F^{\times 2}\subseteq N(-c) \Longleftrightarrow d(-c)\ge 2e. 
 	 \end{align} 
 	 So the assertion follows by \eqref{equiv-a} and \eqref{N-aOF}. 
 	 
 	  For (b), if $a\not\in \mathscr{H}$, again by Lemma \ref{lem:a-in-gphia}(i), $\phi_{a}\not\in \mathscr{H}$. Hence, by Lemma \ref{lem:ga-formula-F2}(ii) and Proposition \ref{prop:phia-a}(i), 
 	 \begin{align*}
 	 	g(\phi_{a})F^{\times 2}=(1+\mathfrak{p}^{\alpha(\phi_{a})})F^{\times 2}\cap N(-\phi_{a})=(1+\mathfrak{p}^{\alpha(\phi_{a})})F^{\times 2}\cap N(-a).
 	 \end{align*} 
 	 So, by \cite[Lemma 1.3(i)]{beli_integral_2003} with $H=(1+\mathfrak{p}^{\alpha(\phi_{a})})F^{\times 2}$ and \cite[Lemma 1.2(iii)]{beli_integral_2003} with $k=\alpha(\phi_{a})$, 
	 \begin{align}\label{equiv-chia}
	 	\begin{split}
	 		g(\phi_{a})F^{\times 2}\subseteq N(-c) &\Longleftrightarrow   (1+\mathfrak{p}^{\alpha(\phi_{a})})F^{\times 2}\cap N(-a)\subseteq N(-c)\\			
	 		&\Longleftrightarrow  (1+\mathfrak{p}^{\alpha(\phi_{a})})F^{\times 2}\subseteq N(-c)\quad\text{or}\quad(1+\mathfrak{p}^{\alpha(\phi_{a})})F^{\times 2}\subseteq N(ca)  \\
	 		&\Longleftrightarrow   d(-c)+\alpha(\phi_{a})>2e\quad\text{or}\quad d(ca)+\alpha(\phi_{a})>2e \\
	 		&\Longleftrightarrow  \max\{d(-c),d(ca)\}>2e-\alpha(\phi_{a}).
	 	\end{split}
	 \end{align}
  So the assertion follows by \eqref{equiv-a} and \eqref{equiv-chia}.
  
  (ii) By Proposition \ref{prop:G-g}(ii) and \eqref{Na}, $G(a)=N(-a)=(\langle -a\rangle F^{\times 2} )^{\perp}$. Then, by dualization,
	 \begin{align*}
	 	(\langle -a\rangle F^{\times 2} )^{\perp}\subseteq  N(-c)\Longleftrightarrow -c\in \langle -a\rangle F^{\times 2}=F^{\times 2}\cup (-a)F^{\times 2},
	 \end{align*}
	  as desired.
\end{proof}
 
 \begin{prop}\label{prop:Ga-OF}
 	Let $a\in F^{\times}$. Then $G(a)\supseteq \mathcal{O}_{F}^{\times}F^{\times 2}$ if and only if $d(-a)\ge 2e$ and either $R\le 4-2e$, or $e=f=1$, i.e., $F=\mathbb{Q}_{2}$, and $R=4$.
 \end{prop}
 \begin{proof}
   \textbf{Necessity.} By Proposition \ref{prop:G-g}(iii) and the hypothesis, $N(-a)\supseteq G(a)\supseteq \mathcal{O}_{F}^{\times}F^{\times 2}$. So, by \eqref{N-aOF}, $d(-a)\ge 2e$ and thus $R$ is even.
 	
 	 If $a\not\in\mathscr{S}$, then $2e\le d(-a)\le S=e-R/2$, so $R\le -2e<4-2e$.
 	 
 	Assume that $a\in \mathscr{S}$. If $R>4e$, by Proposition \ref{prop:G-g}(i)(a), $G(a)=\langle a\rangle F^{\times 2}$. But note that $\langle a\rangle F^{\times 2}=aF^{\times 2}\cup F^{\times 2}\nsupseteq\mathcal{O}_{F}^{\times}F^{\times 2}$, a contradiction. Therefore, $R\le 4e$. 
 	 
 	 If $R\le 2e$, by Proposition \ref{prop:G-g}(i)(b) and Lemma \ref{lem:ga-formula-F2}, $G(a)=g(\phi_{a})F^{\times 2}=\mathcal{O}_{F}^{\times}F^{\times 2}\cap (1+\mathfrak{p}^{\alpha(\phi_{a})})F^{\times 2}\cap N(-\phi_{a})$. Hence, by  \eqref{N-aOF},
 	 \begin{align*}
 	 	g(\phi_{a})F^{\times 2} \supseteq \mathcal{O}_{F}^{\times }F^{\times 2}&\Longleftrightarrow \begin{cases}
 	 		N(-\phi_{a})\supseteq \mathcal{O}_{F}^{\times}F^{\times 2},  \\
 	 		 (1+\mathfrak{p}^{\alpha(\phi_{a})})F^{\times 2} \supseteq \mathcal{O}_{F}^{\times}F^{\times 2}= (1+\mathfrak{p})F^{\times 2}.
 	 	\end{cases}\\
 	 	&\Longleftrightarrow \begin{cases}
 	 		d(-a)=d(-\phi_{a})\ge 2e,  \\
 	 		\alpha(\phi_{a})\le 1.
 	 	\end{cases}
 	 \end{align*} 
 	 By Proposition \ref{prop:phia-a}(iii), $S\le 2e$ and thus $\lfloor S/2\rfloor\le e$. So
 	 \begin{align*}
 	 	  e-\lfloor S/2\rfloor \le 2e-2\lfloor S/2\rfloor \le d(-a)-2\lfloor S/2\rfloor.
 	 \end{align*}
    It follows that $e-S/2\le e-\lfloor S/2\rfloor=\alpha(\phi_{a})\le 1 $. Recall that $S=e-R/2$ and hence $R\le 4-2e$.
    
    Suppose that $2e<R\le 4e$. If $R>4-2e$, recall that $R$ is even, so $R\ge 6-2e$. By \cite[Lemma 3.8]{beli_integral_2003}, $\pi^{6-2e}\varepsilon\in \mathscr{A}$ and $ G(\pi^{6-2e}\varepsilon)\supseteq G(\pi^{R}\varepsilon)\supseteq \mathcal{O}_{F}^{\times }F^{\times 2}$. Assume that $e\ge 2$. Then $4-2e<6-2e<2e$. In the case $R\le 2e$, as previously shown, $G(\pi^{6-2e}\varepsilon)\supsetneq \mathcal{O}_{F}^{\times}F^{\times 2}$, a contradiction. So $e=1$.

    Now, we have $2<R\le 4$ and thus $R=4$ from the parity of $R$. Since $d(-a)\ge 2=2e$, by \eqref{N-aOF},  
    \begin{align}\label{1+p2}
    	 (1+\mathfrak{p}^{2})F^{\times 2}\subseteq (1+\mathfrak{p} )F^{\times 2}= \mathcal{O}_{F}^{\times}F^{\times 2}\subseteq N(-a)=N(-\phi_{a}).
    \end{align}
    By Proposition \ref{prop:phia-a}(ii), $\alpha(\phi_{a})=2$. Hence, by Lemma \ref{lem:ga-formula-F2} \eqref{1+p2} and \cite[Lemma 1.2(i)]{beli_integral_2003},
    \begin{align*}
    	 g(\phi_{a})F^{\times 2}=\mathcal{O}_{F}^{\times}F^{\times 2}\cap (1+\mathfrak{p}^{\alpha(\phi_{a})})F^{\times 2}\cap N(-\phi_{a})=(1+\mathfrak{p}^{2})F^{\times 2}= \langle\Delta\rangle F^{\times 2}.
    \end{align*}
   So, by Proposition \ref{prop:G-g}(i) and the hypothesis,
    \begin{align*}
    	G(a)=\langle a\rangle g(\phi_{a})F^{\times 2}=\langle a\rangle \langle\Delta\rangle F^{\times 2}\supseteq \mathcal{O}_{F}^{\times}F^{\times 2}.
    \end{align*}
     Hence, by \cite[63:9]{omeara_quadratic_1963},
    \begin{align*}
    	   2N\mathfrak{p}=|\mathcal{O}_{F}^{\times}F^{\times 2}/F^{\times 2}|\le |\langle a\rangle \langle \Delta\rangle F^{\times 2 }/F^{\times 2}|\le 4.
    \end{align*}
     This implies that $2^{f}=N\mathfrak{p}\le 2$ and thus $f=1$, as desired.
    
     \textbf{Sufficiency.} If $e=f=1$ and $R=4$, then $F=\mathbb{Q}_{2}$, $\Delta\in 5\mathbb{Q}_{2}^{\times 2}$ and $-a\in \mathbb{Q}_{2}^{\times 2}\cup 5\mathbb{Q}_{2}^{\times 2}$ (as $d(-a)\ge 2e$). So $
     	 (1+\mathfrak{p}^{2})\mathbb{Q}_{2}^{\times 2}=(1+4\mathbb{Z}_{2})\mathbb{Q}_{2}^{\times 2}=\langle 5\rangle \mathbb{Q}_{2}^{\times 2}$ and
     \begin{align*}
     	\langle a\rangle (1+\mathfrak{p}^{2})F^{\times 2}=\langle a,5\rangle \mathbb{Q}_{2}^{\times 2}=\langle -1,5\rangle \mathbb{Q}_{2}^{\times 2} \quad\text{or}\quad \langle -5,5\rangle \mathbb{Q}_{2}^{\times 2},
     \end{align*}
      which equals to $\langle -1,5\rangle \mathbb{Q}_{2}^{\times 2} $ in both cases. Hence, from \cite[Definition 4(II)(iii)]{beli_integral_2003},
    \begin{align*}
    	G(a)=\langle a\rangle (1+\mathfrak{p}^{2})F^{\times 2}=\langle -1,5\rangle   \mathbb{Q}_{2}^{\times 2}= \mathbb{Z}_{2}^{\times}\mathbb{Q}_{2}^{\times 2}.
    \end{align*}
    Assume that $R\le 4-2e$. If $R<-2e$, then $a\not\in \mathscr{A}$. Then, by Proposition \ref{prop:G-g}(ii), $G(a)=N(-a)$. Since $d(-a)\ge 2e$, \eqref{N-aOF} implies $G(a)\supseteq \mathcal{O}_{F}^{\times}F^{\times 2}$; if $-2e\le R\le 4-2e$, then $S\ge e-(4-2e)/2=2e-2$, so $e-\lfloor S/2\rfloor \le e-\lfloor (2e-2)/2\rfloor=1$. Hence, from \cite[Definition 4(III)(vi)]{beli_integral_2003}, $G(a)=(1+\mathfrak{p}^{e-\lfloor S/2\rfloor})F^{\times 2}\supseteq (1+\mathfrak{p})F^{\times 2}= \mathcal{O}_{F}^{\times}F^{\times 2}$.
\end{proof}
From \cite[Lemma 7.2(i)]{beli_integral_2003} and \cite[Lemma 2.3.2(ii)]{beli_thesis_2001}, we also have equivalent conditions on the reverse inclusion.
\begin{prop}\label{prop:GaOF-equiv}
	Let $a\in F^{\times}$. Then $G(a)\subseteq \mathcal{O}_{F}^{\times }F^{\times 2}$ if and only if $R$ is even and the following conditions holds:
	\begin{enumerate}[itemindent=-1em,label=\rm (\roman*)]
		\item If $a\in \mathscr{A}$, then $d(-a)=2e=-R  $ or $d(-a)>S$, i.e., $a\in \mathscr{S}$.

		\item If $a\not\in \mathscr{A}$, then 	$d(-a)=2e<-R$.
	\end{enumerate}
	Thus either $a\in \mathscr{A}$ or $d(-\varepsilon)=d(-a)=2e$ holds.
\end{prop}

In what follows, we study the invariants in BONGs theory. First, let us recall the definition of a BONG and the equivalent conditions for a sequence of vectors to be a good BONG (cf. \cite[Definition 2]{beli_integral_2003} and \cite[Lemma 2.2]{HeHu2}).

 \begin{defn}\label{defn:bong}
	Let $ M $ be an $ \mathcal{O}_{F} $-lattice on a quadratic space $V$. The elements $ x_{1},\ldots,x_{m}\in V$ is called a basis of norm generators (abbr. BONG) for $ M $ if
	\begin{enumerate}[itemindent=-1em,label=\rm (\roman*)]
		\item  $ \mathfrak{n}(M)=Q(x_{1})\mathcal{O}_{F} $ and
		
		\item $ x_{2},\ldots,x_{m} $ is a BONG for $ \pr_{x_{1}^{\perp}}M $,  where $ \pr_{x_{1}^{\perp}} :FM\to (Fx_{1})^{\perp} $ denotes the projection map.
	\end{enumerate}
 	If moreover $ \ord (Q(x_{i}))\le \ord (Q(x_{i+2})) $ for all $ 1\le i\le m-2 $, then the BONG $ x_{1},\ldots,x_{m} $ is called good. 
\end{defn}

Let $a_{i}=Q(x_{i})$. We write $V\cong [a_{1},\ldots,a_{m}]$ if $V=Fx_{1}\perp\ldots \perp Fx_{m}$, and write $M\cong \prec a_{1},\ldots,a_{m}\succ$ if $x_{1},\ldots, x_{m}$ is a BONG for $M$. Note from Definition \ref{defn:bong} that $[a_{1},\ldots,a_{m}]=F\prec a_{1},\ldots,a_{m}\succ$.
 \begin{lem}\label{lem:2.2}
		Let $ x_{1},\ldots, x_{m} $ be pairwise orthogonal vectors of a quadratic space with $ Q(x_{i})=a_{i} $ and $ R_{i}=\ord (a_{i})$. Then $ x_{1},\ldots,x_{m} $ is a good BONG for some lattice if and only if
		\begin{equation}\label{eq:GoodBONGs}
			R_{i}\le R_{i+2} \quad \text{for all }\;  1\le i\le m-2,
		\end{equation}
		and $a_{i+1}/a_{i}\in \mathscr{A}$ for all $1\le i\le m-1$, i.e.,
		\begin{equation}\label{eq:BONGs}
			R_{i+1}-R_{i}+2e\ge 0 \quad\text{and}\quad  R_{i+1}-R_{i}+d(-a_{i}a_{i+1})\ge  0 \quad \text{for all }\; 1\le i\le m-1.
		\end{equation}
	\end{lem}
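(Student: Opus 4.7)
My plan is to prove both directions of the equivalence, drawing on Beli's structural theory of BONGs in dyadic local fields (as recalled in Section \ref{sec:BONGs}).

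For the \emph{necessity} direction, I would assume $x_{1},\ldots,x_{m}$ is a good BONG for some lattice $L$. The inequality \eqref{eq:GoodBONGs} is then the defining condition of ``good'' in Beli's terminology, so it holds automatically. For \eqref{eq:BONGs}, I would analyze the binary pair $(x_{i},x_{i+1})$: the bound $R_{i+1}-R_{i}+2e\ge 0$ reflects the general constraint between scale and norm ideals, applied to the binary sublattice spanned by $x_{i}$ and $x_{i+1}$; the bound $R_{i+1}-R_{i}+d(-a_{i}a_{i+1})\ge 0$ is the quadratic-defect constraint required for $\langle a_{i},a_{i+1}\rangle$ to appear as a consecutive BONG pair, which can be extracted from Beli's binary analysis in \cite[\S 4]{beli_integral_2003}.

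For the \emph{sufficiency} direction, I would proceed by induction on $m$, using Beli's reconstruction of a lattice from prescribed BONG data. The base case $m=1$ is trivial: take $L=\mathcal{O}_{F}x_{1}$. For $m=2$, condition \eqref{eq:BONGs} guarantees the existence of a binary $\mathcal{O}_{F}$-lattice on $Fx_{1}\oplus Fx_{2}$ having $(x_{1},x_{2})$ as a BONG: either the orthogonal sum $\mathcal{O}_{F}x_{1}\perp\mathcal{O}_{F}x_{2}$ when this is integral of the right shape, or otherwise a non-diagonal binary modular lattice of the appropriate scale. For the inductive step $m\ge 3$, take by induction a lattice $L^{\prime}$ on $Fx_{2}\oplus\cdots\oplus Fx_{m}$ with good BONG $(x_{2},\ldots,x_{m})$, and then attach $x_{1}$ to $L^{\prime}$ either as an orthogonal summand or as a binary attachment spliced to $x_{2}$, depending on the relative size of $R_{2}-R_{1}$. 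The condition $R_{1}\le R_{3}$ in \eqref{eq:GoodBONGs} is exactly what is needed to ensure that this attachment does not violate the ``good'' property already held by $L^{\prime}$.

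The hard part will be the sufficiency direction, specifically in the case where the consecutive pair $x_{i},x_{i+1}$ cannot be realized by the orthogonal sum $\mathcal{O}_{F}x_{i}\perp\mathcal{O}_{F}x_{i+1}$ and one is forced to use a non-diagonal binary modular component. Verifying that the pasted-together lattice admits the prescribed vectors $x_{1},\ldots,x_{m}$ as a good BONG with exactly the given norms $a_{i}$ requires tracking the scale, norm, and determinant invariants at each gluing step, and appealing to Beli's classification of binary dyadic modular lattices to confirm that the chosen vectors play their intended role in the BONG recursion.
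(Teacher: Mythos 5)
The paper itself offers no proof of this lemma: it is quoted verbatim from \cite[Lemma 2.2]{HeHu2}, and the justification there is a two-line translation of results already recalled in Section \ref{sec:BONGs}. Beli proves (\cite[Lemma 4.3(iii)]{beli_Anew_2010}; see also \cite[Lemma 4.6]{beli_integral_2003}) that mutually orthogonal anisotropic vectors $x_{1},\ldots,x_{m}$ form a BONG of some lattice if and only if $a_{i+1}/a_{i}\in\mathscr{A}$ for $1\le i\le m-1$; since $\ord(a_{i+1}/a_{i})=R_{i+1}-R_{i}$ and $d(-a_{i+1}/a_{i})=d(-a_{i}a_{i+1})$, the description \eqref{a-in-mathscrA} of $\mathscr{A}$ turns this into exactly \eqref{eq:BONGs}, and ``good'' is by definition the condition \eqref{eq:GoodBONGs}. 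So the intended argument is a direct citation, not a construction.

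Your proposal instead sets out to re-prove Beli's criterion from first principles. The necessity half and the reduction of \eqref{eq:BONGs} to the binary statement ``$a_{i+1}/a_{i}\in\mathscr{A}$'' are sound, and the base cases $m=1,2$ are indeed handled by \cite[Lemma 3.5]{beli_integral_2003}. But in the sufficiency direction the only step carrying real content --- producing, from a lattice $L'$ with BONG $(x_{2},\ldots,x_{m})$ and from $a_{2}/a_{1}\in\mathscr{A}$, a lattice $L$ with $x_{1}$ a norm generator and $\pr_{x_{1}^{\perp}}L=L'$ --- is precisely the assertion of Beli's lemma, and you leave it as a description (``a binary attachment spliced to $x_{2}$'') rather than an argument; in particular you never verify that the glued lattice projects back onto $L'$ so that $(x_{2},\ldots,x_{m})$ remains a BONG of $\pr_{x_{1}^{\perp}}L$. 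As written the sufficiency direction is therefore not closed: either carry out Beli's construction in detail or simply invoke \cite[Lemma 4.3(iii)]{beli_Anew_2010}, as the source does. One further small correction: the goodness condition \eqref{eq:GoodBONGs} is not ``needed to ensure the attachment does not violate'' anything --- existence of the BONG is independent of it, and goodness is simply the definition, to be checked (trivially) at the end.
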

\begin{re}\label{re:Ri+Ri+2-increasing}
	 From \eqref{eq:GoodBONGs}, if $1\le i\le j\le m-1$, then $R_{i}+R_{i+1}\le  R_{j}+R_{j+1} $.
\end{re}

\medskip

	In the remainder of this section, let $M\cong \prec a_{1},\ldots, a_{m} \succ $ be an $ \mathcal{O}_{F} $-lattice relative to some good BONG.

\begin{defn}\label{defn:alpha}
		For $1\le i\le m$, we define the $R_{i}$-invariant $R_{i}=R_{i}(M)\coloneqq \ord(a_{i})$.

		For $ 1\le i\le m-1 $, we put
		\begin{align}\label{T}
		 T_{j}^{(i)}=T_{j}^{(i)}(M)\coloneqq
			\begin{cases}
				(R_{i+1}-R_{i})/2+e  &\text{if $ j=0$}, \\
				R_{i+1}-R_{j}+d(-a_{j}a_{j+1}) &\text{if $ 1\le j\le i $}, \\
			 R_{j+1}-R_{i}+d(-a_{j}a_{j+1})  &\text{if $ i\le j\le m-1 $},
			\end{cases}
		\end{align}
		and define the $\alpha_{i}$-invariant $ \alpha_{i}=\alpha_{i}(M)\coloneqq\min\{T_{0}^{(i)},\ldots, T_{m-1}^{(i)}\} $.
\end{defn}
\begin{re}\label{re:alpha-binary}
		In particular, if $L$ is a binary lattice, then $R_{2}(L)-R_{1}(L)=\ord(a(L))$ and $\alpha_{1}(L)=\alpha(a(L))$.
\end{re}
Let $  c_{1},c_{2},\ldots \in F^{\times} $. For $ 1\le i\le j+1 $, we write $ c_{i,j}=c_{i}\cdots c_{j} $ for short and set $ c_{i,i-1}=1 $. For $ 0\le i-1\le j\le m $ and $c\in F^{\times}$, we define
\begin{equation}\label{defn:d[]}
  d[ca_{i,j}]\coloneq  \min\{d(ca_{i,j}),\alpha_{i-1},\alpha_{j}\}.
\end{equation}	Here, if $ i-1\in \{0,m\}$, $ \alpha_{i-1} $ is ignored; if $ j\in\{0,m\}$, $ \alpha_{j} $ is ignored. 
 By \cite[Corollary\;2.5(i)]{beli_Anew_2010}, the invariants $\alpha_{i}$ can be reformulated as
\begin{align}\label{eq:alpha-defn}
	\alpha_{i}=\min\{(R_{i+1}-R_{i})/2+e,R_{i+1}-R_{i}+d[-a_{i,i+1}]\}\,.
\end{align}

Let us recall some useful properties of the invariants  $ R_{i} $ and $ \alpha_{i} $ \cite[Propositions 2.2 and 2.3]{He22}.  
\begin{prop}\label{prop:Rproperty}
	 Let $ 1\le i\le m-1 $. 		
		\begin{enumerate}[itemindent=-0.5em,label=\rm (\roman*)]
			\item  $ R_{i+1}-R_{i}>2e$ (resp. $ =2e $, $ <2e $) if and only if $ \alpha_{i}>2e $ (resp. $ =2e $, $ <2e $).
			
			\item   If $ R_{i+1}-R_{i}\ge 2e$ or $ R_{i+1}-R_{i}\in \{-2e,2-2e,2e-2\} $, then $ \alpha_{i}=(R_{i+1}-R_{i})/2+e $.
			
			\item If $ R_{i+1}-R_{i}\le 2e $, then $ \alpha_{i}\ge R_{i+1}-R_{i} $, and the equality holds if and only if $ R_{i+1}-R_{i}=2e $ or $ R_{i+1}-R_{i} $ is odd. 
		\end{enumerate} 
	\end{prop}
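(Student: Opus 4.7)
The plan is to deduce all three parts from the compact formula
\[
\alpha_i = \min\{(R_{i+1}-R_i)/2 + e,\; R_{i+1} - R_i + d[-a_{i,i+1}]\}
\]
of \eqref{eq:alpha-defn}, combined with the BONG inequalities \eqref{eq:GoodBONGs}--\eqref{eq:BONGs} and the observation that $d[-a_{i,i+1}]\ge 0$ (each of $d(-a_ia_{i+1})$, $\alpha_{i-1}$, $\alpha_{i+1}$ entering \eqref{defn:d[]} is non-negative). I would prove (i) first and use it freely in the other parts.

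For (i), formula \eqref{eq:alpha-defn} immediately yields the sandwich
\[
\min\{(R_{i+1}-R_i)/2 + e,\; R_{i+1}-R_i\}\;\le\; \alpha_i\;\le\;(R_{i+1}-R_i)/2+e,
\]
and comparing each end with $2e$ in the three cases $R_{i+1}-R_i>2e$, $=2e$, $<2e$ gives the trichotomy.

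For (ii), the goal is to show the first term $(R_{i+1}-R_i)/2+e$ attains the minimum in \eqref{eq:alpha-defn}. The case $R_{i+1}-R_i\ge 2e$ is direct, since then $(R_{i+1}-R_i)/2+e\le R_{i+1}-R_i\le R_{i+1}-R_i+d[-a_{i,i+1}]$. For the exceptional values $R_{i+1}-R_i\in\{-2e,\,2-2e,\,2e-2\}$, I would use the BONG conditions to force $d[-a_{i,i+1}]$ to be large. For $R_{i+1}-R_i=-2e$, \eqref{eq:GoodBONGs} gives $R_i-R_{i-1}\ge 2e$ and $R_{i+2}-R_{i+1}\ge 2e$, hence $\alpha_{i-1},\alpha_{i+1}\ge 2e$ by (i); together with $d(-a_ia_{i+1})\ge 2e$ from \eqref{eq:BONGs}, this forces $d[-a_{i,i+1}]\ge 2e$, so the second term is $\ge 0=(R_{i+1}-R_i)/2+e$. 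For $R_{i+1}-R_i=\pm(2e-2)$, the same scheme works once one invokes the parity refinement that the possible values of $d(c)$ for a non-square $c$ of even order are the odd integers in $[1,2e-1]$ together with $\{2e,\infty\}$; this upgrades $d(-a_ia_{i+1})\ge 2e-2$ to $d(-a_ia_{i+1})\ge 2e-1$, and analogous bounds $\alpha_{i\pm 1}\ge 2e-1$ from (i) close the argument.

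For (iii), the inequality $\alpha_i\ge R_{i+1}-R_i$ when $R_{i+1}-R_i\le 2e$ is immediate from \eqref{eq:alpha-defn}: both arguments of the minimum dominate $R_{i+1}-R_i$. The equality at $R_{i+1}-R_i=2e$ follows from part (i). When $R_{i+1}-R_i<2e$ is odd, $\ord(-a_ia_{i+1})=R_i+R_{i+1}$ is also odd, so $d(-a_ia_{i+1})=0$; hence $d[-a_{i,i+1}]=0$ and $\alpha_i=R_{i+1}-R_i$. The converse—that even values $R_{i+1}-R_i<2e$ force strict inequality $\alpha_i>R_{i+1}-R_i$—is the main obstacle. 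The plan is to combine $d(-a_ia_{i+1})\ge 1$ (from the even-order case, with $\infty$ allowed) with $\alpha_{i\pm 1}\ge 1$ (which follows by iterating part (i) together with \eqref{eq:GoodBONGs}) to obtain $d[-a_{i,i+1}]\ge 1$, yielding $\alpha_i\ge R_{i+1}-R_i+1$. Ruling out the vanishing of a neighboring $\alpha$ in this regime is the delicate bookkeeping step, but it reduces via the BONG inequalities to a small collection of edge cases of the same flavor as in (ii).
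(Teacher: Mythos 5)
The paper does not prove this proposition; it is recalled verbatim from \cite[Propositions 2.2, 2.3]{He22} (ultimately going back to Beli), so there is no internal proof to compare against. Judged on its own terms, your part (i) is correct and complete: the sandwich $\min\{(R_{i+1}-R_i)/2+e,\,R_{i+1}-R_i\}\le\alpha_i\le (R_{i+1}-R_i)/2+e$ does give the trichotomy. The cases $R_{i+1}-R_i\ge 2e$ and $R_{i+1}-R_i=-2e$ of (ii), and the inequality plus the ``$2e$ or odd'' direction of (iii), are also fine.

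The gap is in the remaining cases, where you need lower bounds on the \emph{neighboring} invariants $\alpha_{i\pm 1}$ and assert they ``follow from (i).'' They do not. Part (i) only locates $\alpha_{i\pm1}$ relative to $2e$, and the sandwich only gives $\alpha_{i-1}\ge\min\{(R_i-R_{i-1})/2+e,\,R_i-R_{i-1}\}$. For $R_{i+1}-R_i=2-2e$ you need $\alpha_{i-1}\ge 2e-1$ while \eqref{eq:GoodBONGs} only guarantees $R_i-R_{i-1}\ge 2e-2$, for which the sandwich yields only $\alpha_{i-1}\ge 2e-2$; for $R_{i+1}-R_i=2e-2$ you need $\alpha_{i-1}\ge 1$ while $R_i-R_{i-1}$ may be as small as $2-2e$, for which the sandwich yields a negative bound; and the converse of (iii) needs $\alpha_{i\pm1}\ge 1$ in the same weak regime. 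The bounds you need are precisely instances of (ii) and (iii) at the indices $i\pm1$, so running the argument through the recursive formula \eqref{eq:alpha-defn} (whose right-hand side contains $\alpha_{i-1},\alpha_{i+1}$ inside $d[-a_{i,i+1}]$) is circular unless organized as an actual induction — and there is no obvious well-founded order, since $\alpha_{i-1}$ in turn depends on $\alpha_i$. The standard route is to avoid \eqref{eq:alpha-defn} for these estimates and bound every term $T_j^{(i)}$ of Definition \ref{defn:alpha} directly, using Proposition \ref{prop:T} together with $d(-a_ja_{j+1})\ge\max\{0,R_j-R_{j+1}\}$ from \eqref{eq:BONGs}, the parity fact you correctly identify ($d$ of an even-order non-square lies in $\{1,3,\dots,2e-1,2e\}$), and the observation that an odd difference $R_{j+1}-R_j$ forces $R_{j+1}-R_j\ge 1$. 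Your sketch explicitly defers this (``delicate bookkeeping'') rather than carrying it out, so as written the proof is incomplete exactly at the non-routine points.
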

	\begin{prop}\label{prop:alphaproperty}
		Let $ 1\le i\le m-1 $.
			\begin{enumerate}[itemindent=-0.5em,label=\rm (\roman*)]
			\item  Either $ 0\le \alpha_{i}\le 2e $ and $ \alpha_{i}\in \mathbb{Z} $, or $ 2e<\alpha_{i}<\infty $ and $ 2\alpha_{i}\in \mathbb{Z} $; thus $ \alpha_{i}\ge 0 $.
			
			\item   $ \alpha_{i}=0 $ if and only if $ R_{i+1}-R_{i}=-2e $.
			
			\item $ \alpha_{i}=1 $ if and only if either $ R_{i+1}-R_{i}\in \{2-2e,1\} $, or $ R_{i+1}-R_{i}\in [4-2e,0]^{E} $ and $ d[-a_{i}a_{i+1}]=R_{i}-R_{i+1}+1 $.
			
			\item  If $ \alpha_{i}=0 $, i.e., $ R_{i+1}-R_{i}=-2e $, then $ d[-a_{i}a_{i+1}]\ge 2e $.
			
			\item If $ \alpha_{i}=1 $, then $ d[-a_{i}a_{i+1}]\ge R_{i}-R_{i+1}+1 $, and the equality holds if $ R_{i+1}-R_{i}\not=2-2e $.
		\end{enumerate} 
  
	\end{prop}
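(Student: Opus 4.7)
The plan is to derive all five parts directly from the compact formula \eqref{eq:alpha-defn},
\[
\alpha_i=\min\bigl\{(R_{i+1}-R_i)/2+e,\; R_{i+1}-R_i+d[-a_{i,i+1}]\bigr\},
\]
combined with the good-BONG inequalities $R_{i+1}-R_i+2e\ge 0$ and $R_{i+1}-R_i+d(-a_ia_{i+1})\ge 0$ from Lemma~\ref{lem:2.2}, the bracket bound $d[-a_{i,i+1}]\le d(-a_ia_{i+1})$ built into \eqref{defn:d[]}, and the structural facts collected in Proposition~\ref{prop:Rproperty}. A second essential input is the classical list of admissible values of the quadratic defect: $d(c)=0$ whenever $\ord(c)$ is odd, while for $c\in\mathcal{O}_F^\times$ one has $d(c)\in\{1,3,\ldots,2e-1,2e,\infty\}$. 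Since $d[-a_{i,i+1}]$ itself involves $\alpha_{i\pm 1}$, the whole argument is set up as an outer induction on $i$, the boundary indices $i=1$ and $i=m-1$ providing the base (one of $\alpha_{i\pm 1}$ being ignored there).

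For (i), non-negativity $\alpha_i\ge 0$ is immediate because both arguments of the min are non-negative. The integrality dichotomy is then obtained by splitting at $R_{i+1}-R_i=2e$ via Proposition~\ref{prop:Rproperty}(i): when $R_{i+1}-R_i>2e$ the second argument exceeds $2e$, so the min is $(R_{i+1}-R_i)/2+e$ and $2\alpha_i\in\mathbb{Z}$; when $R_{i+1}-R_i\le 2e$ is even, $(R_{i+1}-R_i)/2+e\in\mathbb{Z}$ already; when $R_{i+1}-R_i\le 2e$ is odd, Proposition~\ref{prop:Rproperty}(iii) pins $\alpha_i=R_{i+1}-R_i\in\mathbb{Z}$. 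For (ii), the reverse direction follows from Proposition~\ref{prop:Rproperty}(ii). Forward, assume $\alpha_i=0$ with $R_{i+1}-R_i>-2e$; the first min-argument is strictly positive, so $R_{i+1}-R_i+d[-a_{i,i+1}]=0$. Split by which of $d(-a_ia_{i+1}),\alpha_{i-1},\alpha_{i+1}$ realizes $d[-a_{i,i+1}]$: in the first subcase, parity of $R_{i+1}-R_i$ together with the admissible-defect list and the good-BONG inequality forces $d(-a_ia_{i+1})=2e$ and $R_{i+1}-R_i=-2e$, a contradiction; the other two subcases are excluded by combining the inductive hypothesis on (ii), the ordering $R_{i-1}\le R_{i+1}$ from \eqref{eq:GoodBONGs}, and Proposition~\ref{prop:Rproperty}(iii), which together violate the good-BONG defect inequality at the neighbouring index.

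Parts (iii)--(v) follow by the same dichotomy with finer bookkeeping. Part (iv) is immediate: $R_{i+1}-R_i=-2e$ makes the first argument equal to $\alpha_i=0$, so the second being $\ge 0$ forces $d[-a_{i,i+1}]\ge 2e$. For (v), if $\alpha_i=1$ and $R_{i+1}-R_i\ne 2-2e$, the first argument differs from $1$, hence the min is attained by the second argument, yielding $d[-a_{i,i+1}]=R_i-R_{i+1}+1$; in the remaining case $R_{i+1}-R_i=2-2e$ only the inequality survives. Part (iii) then enumerates the pairs $(R_{i+1}-R_i,d[-a_{i,i+1}])$ compatible with $\alpha_i=1$: the first argument equals $1$ precisely when $R_{i+1}-R_i=2-2e$, while the second argument attaining the value $1$ yields, via Proposition~\ref{prop:Rproperty}(iii), either the odd case $R_{i+1}-R_i=1$ or an even $R_{i+1}-R_i\in[4-2e,0]^E$ satisfying the stated defect condition.

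The principal obstacle is the recursive character of $d[-a_{i,i+1}]$, which entangles $\alpha_i$ with $\alpha_{i\pm 1}$; the clean resolution is the outer induction on $i$ sketched above, after which every subcase reduces to a finite check driven by the parity of $R_{i+1}-R_i$ and the short list of admissible unit quadratic defects.
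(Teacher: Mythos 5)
The paper does not prove this proposition at all: it is quoted verbatim from \cite[Propositions 2.2, 2.3]{He22} (ultimately from Beli's work), so there is no internal proof to compare against. Your reconstruction from \eqref{eq:alpha-defn}, the good-BONG inequalities, Proposition \ref{prop:Rproperty}, and the admissible values of the quadratic defect is the standard route, and the case analysis you sketch for (ii)--(v) is essentially correct: in particular, the key step in (ii) that an even value $-(R_{i+1}-R_i)\in[0,2e)$ cannot be a unit defect, and the dichotomy in (v) according to whether $T_0^{(i)}=(R_{i+1}-R_i)/2+e$ equals $1$, are exactly the right observations.

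The one structural flaw is the ``outer induction on $i$.'' As set up it is not well-founded: $d[-a_{i,i+1}]$ involves \emph{both} $\alpha_{i-1}$ and $\alpha_{i+1}$, so at your proposed base case $i=1$ the quantity $\alpha_2$ is still present (only $\alpha_0$ is ignored), and there is no order in which the indices can be processed so that both neighbours are already known. Fortunately the induction is also unnecessary. Part (i) holds for every index unconditionally, because it follows from the \emph{original} definition $\alpha_i=\min_j T_j^{(i)}$ (each $T_j^{(i)}$ with $j\ge 1$ is a non-negative integer by \eqref{eq:GoodBONGs} and \eqref{eq:BONGs}, and $T_0^{(i)}\ge 0$), together with Proposition \ref{prop:Rproperty}(i)--(iii) --- not from the two-term formula \eqref{eq:alpha-defn}, whose second argument involves $\alpha_{i\pm1}$ and is therefore not ``immediately'' non-negative as you assert. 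Once (i) and Proposition \ref{prop:Rproperty}(iii) are available at \emph{all} indices, the subcases of (ii) in which $d[-a_{i,i+1}]$ is realized by $\alpha_{i\pm1}$ are excluded by exactly the comparison you indicate ($\alpha_{i+1}=R_i-R_{i+1}$ forces $R_{i+2}=R_i$ and $R_{i+2}-R_{i+1}$ equal to $2e$ or odd via \ref{prop:Rproperty}(iii), each leading to a contradiction), with no appeal to the conclusion of (ii) at a neighbouring index. Rewriting the argument with this observation in place of the induction makes it complete.
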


We present a simple but new proposition that is the key to proving the stability of the invariant $\alpha_{i}$ under unramified extensions (cf. Proposition \ref{prop:alpha-extension}(iii)).

\begin{prop}\label{prop:T}
	Let $1\le i,j\le m-1$. If $d(-a_{j}a_{j+1})\ge 2e$, then $T_{j}^{(i)}\ge T_{0}^{(i)}$.
\end{prop}
\begin{proof}
	 If $j\le i$, then, by \eqref{eq:BONGs} and Remark \ref{re:Ri+Ri+2-increasing},
	\begin{align*}
		-R_{j}+d(-a_{j}a_{j+1})\ge -R_{j}+2e\ge -(R_{j}+R_{j+1})/2+e\ge -(R_{i}+R_{i+1})/2+e.
	\end{align*}
	 So
	\begin{align*}
		 T_{j}^{(i)}&=R_{i+1}-R_{j}+d(-a_{j}a_{j+1}) \ge R_{i+1}-(R_{i}+R_{i+1})/2+e=T_{0}^{(i)}.
	\end{align*}
	If $j\ge i$, then, again by \eqref{eq:BONGs} and Remark \ref{re:Ri+Ri+2-increasing},
	\begin{align*}
		R_{j+1}+d(-a_{j}a_{j+1})\ge R_{j+1}+2e\ge (R_{j}+R_{j+1})/2+e\ge (R_{i}+R_{i+1})/2+e.
	\end{align*}
	 So
	\begin{align*}
		  T_{j}^{(i)}&=R_{j+1}-R_{i}+d(-a_{j}a_{j+1})\ge (R_{i}+R_{i+1})/2+e-R_{i}=T_{0}^{(i)}.
	\end{align*}
\end{proof}
 
	Let $M\cong \prec a_{1},\ldots,a_{m}\succ$ and $N\cong \prec b_{1},\ldots, b_{n}\succ$ relative to some good BONGs $x_{1},\ldots,x_{m}$ and $y_{1},\ldots,y_{n}$ with $a_{i}=\pi^{R_{i}}\varepsilon_{i}$ and $b_{i}=\pi^{S_{i}}\eta_{i}$, where $R_{i},S_{i}\in \mathbb{Z}$ and $\varepsilon_{i},\eta_{i}\in \mathcal{O}_{F}^{\times}$. Also, let $ \alpha_{i}=\alpha_{i}(M) $ and $ \beta_{i}=\alpha_{i}(N) $.
	For $ 0\le i,j\le m $ and $c\in F^{\times}$, we define
	\begin{equation}\label{defn:d[ab]}
		d[ca_{1,i}b_{1,j}]=\min\{d(ca_{1,i}b_{1,j}),\alpha_{i},\beta_{j}\}.
	\end{equation}
	Here if $ i\in \{0,m\} $, then $ \alpha_{i} $ is ignored; if $ j\in \{0,n\} $, $ \beta_{j} $ is ignored.

Write $P_{i,k}=P_{i,k}(M,N)\coloneqq R_{i+k}-S_{i}$. Define
\begin{equation}
	\begin{aligned}\label{A-i-M-N}
		A_{i}\coloneqq&\;A_{i}(M,N)\\
		 =&\;\begin{cases}
			\min\{P_{i,1}/2+e,P_{i,1}+d[-a_{1,i+1}b_{1,i-1}], P_{i-1,3}+P_{i,1}+d[a_{1,i+2}b_{1,i-2}] \}  \\
			\hskip 8cm\text{if $1\le i\le \min\{m-1,n\}$},\\
			\min\{P_{n+1,1}+d[-a_{1,n+2}b_{1,n}],P_{n+1,1}+P_{n,3}+d[a_{1,n+3}b_{1,n-1}]\}   \\
			\hskip 8cm\text{if $n\le m-2 $ and $i=n+1$},
		\end{cases}
	\end{aligned} 
\end{equation}
 where  the term $ P_{i-1,3}+P_{i,1}+d[a_{1,i+2}b_{1,i-2}] $ is ignored if $ i\in \{1,m-1\} $.

Now, we introduce Beli's representation theorem on lattices in dyadic local fields  (cf. \cite[Theorem 4.5]{beli_representations_2006}).

	\begin{thm}\label{thm:beligeneral}
	Suppose $  n\le m$. Then $ N\rep M $ if and only if  $ FN\rep FM $ and the following conditions hold:
			\begin{enumerate}[itemindent=-0.5em,label=\rm (\roman*)]
		\item For any $ 1\le i\le n $, we have either $ R_{i}\le S_{i} $, or $ 1<i<m $ and $ R_{i}+R_{i+1}\le S_{i-1}+S_{i} $.
		
		\item  For any $ 1\le i\le \min\{m-1,n\} $, we have $ d[a_{1,i}b_{1,i}]\ge A_{i} $.
		
		\item  For any $ 1<i\le \min\{m-1,n+1\} $ such that $R_{i+1}>S_{i-1}$ and $A_{i-1}+A_{i}>2e+R_{i}-S_{i}$, we have $ [b_{1},\ldots, b_{i-1}]\rep [a_{1},\ldots,a_{i}] $.
		
		\item For any $ 1<i\le \min\{m-2,n+1\} $ such that $ S_{i}\ge R_{i+2}>S_{i-1}+2e\ge R_{i+1}+2e$, we have $ [b_{1},\ldots,b_{i-1}]\rep [a_{1},\ldots,a_{i+1}] $. (If $ i=n+1 $, the condition $ S_{i}\ge R_{i+2} $ is ignored.)
	\end{enumerate} 
 
\end{thm}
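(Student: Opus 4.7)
The plan is to prove Theorem \ref{thm:beligeneral} by induction on the rank $n$ of $N$. The base case $n=1$ reduces to representing a single vector of norm $b_1$ inside $M$, which is controlled by the function $G(b_1)$ together with the invariants $R_i(M)$ and $\alpha_i(M)$, and can be verified from the one-variable analogues of the $d[\,\cdot\,]$ and $\alpha$ estimates in Propositions \ref{prop:G-g}--\ref{prop:alphaproperty}. For the inductive step I would split $N\cong \langle b_{1}\rangle \perp N'$ relative to the fixed good BONG $y_{1},\ldots,y_{n}$, produce an orthogonal decomposition $M\cong \langle a_{1}'\rangle \perp M'$ realising $b_{1}$ (possibly after replacing $x_{1},\ldots,x_{m}$ by another good BONG of $M$ within the same isometry class, using Lemma \ref{lem:2.2}), and then apply the inductive hypothesis to embed $N'$ into $M'$.

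For the necessity direction I would analyse what a hypothetical isometry $\sigma\colon N\hookrightarrow M$ forces on the BONG invariants. Condition (i) expresses that the chain of norms $\ord\mathfrak{n}(M_{k})$ of truncated sublattices $M_{k}=\langle x_{1},\ldots,x_{k}\rangle$ must dominate the corresponding chain for $N$, up to the single averaged exception $R_{i}+R_{i+1}\le S_{i-1}+S_{i}$ that accounts for the well-known jump phenomenon when $R_{i+1}-R_{i}$ is small or negative. Condition (ii) comes from a determinant/quadratic-defect comparison: the quantity $d[a_{1,i}b_{1,i}]$ must be at least the sharpest of the three thresholds appearing in \eqref{A-i-M-N}, and Beli's identification of $A_{i}$ in \cite{beli_Anew_2010} shows these three candidate terms are optimal. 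Conditions (iii) and (iv) single out the borderline configurations where the coarse $d[\,\cdot\,]$ inequality becomes tight and an honest lower-rank subform representation is forced; these correspond exactly to the cases where Witt-type cancellation fails in dyadic fields.

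For sufficiency the main obstacle is constructing the embedding precisely at the indices where (iii) or (iv) apply, since in those cases a naive peel-off of $b_{1}$ destroys the good BONG conditions \eqref{eq:GoodBONGs}--\eqref{eq:BONGs}. The strategy is to use the guaranteed subrepresentations $[b_{1},\ldots,b_{i-1}]\rep [a_{1},\ldots,a_{i}]$ (respectively $[a_{1},\ldots,a_{i+1}]$) to fix the first $i-1$ vectors globally, and then reduce to a lower-rank representation problem with strictly improved invariants. I expect the hardest step to be the bookkeeping: after removing $b_{1}$ and the chosen $a_{1}'$ one must verify that $(N',M')$ still satisfies the full list (i)--(iv) with updated $R_{i}$, $S_{i}$, $\alpha_{i}$, $\beta_{i}$, and $T_{j}^{(i)}$, and in particular that each of the three candidates inside $A_{i}$ propagates correctly. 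This requires the technical transfer lemmas for $d[\,\cdot\,]$ under dropping a leading vector developed throughout \cite{beli_representations_2006,beli_Anew_2010}, together with Proposition \ref{prop:T} to control how $T_{j}^{(i)}$ shifts; rather than reprove them I would quote them and check the propagation case by case according to the parity of $i$ and the position of the minimum in \eqref{eq:alpha-defn}.
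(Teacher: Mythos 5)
This statement is not proved in the paper at all: it is quoted verbatim from Beli (Theorem~4.5 of \cite{beli_representations_2006}), and the paper's ``proof'' is simply that citation. So the relevant question is whether your sketch would stand on its own as a proof of Beli's theorem, and it would not. What you have written is a plan in which every load-bearing step is deferred. The base case $n=1$ is itself a substantial theorem (it is essentially Beli's characterization of which elements are represented by a lattice, requiring the full $G$-function machinery), not something that ``can be verified from the one-variable analogues'' of the propositions in Section~\ref{sec:BONGs}. In the inductive step, the entire difficulty is precisely the part you acknowledge but do not carry out: showing that after peeling off $b_{1}$ and a suitable $a_{1}'$, the pair $(N',M')$ again satisfies (i)--(iv) with the updated invariants, and conversely that a representation forces each of the three terms inside $A_{i}$ as a lower bound for $d[a_{1,i}b_{1,i}]$. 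Your justifications for the necessity direction are assertions rather than arguments --- e.g.\ that the three candidates in \eqref{A-i-M-N} ``are optimal'' or that (iii)--(iv) ``correspond exactly to the cases where Witt-type cancellation fails'' --- and no mechanism is given for producing the subrepresentations $[b_{1},\ldots,b_{i-1}]\rep[a_{1},\ldots,a_{i}]$ from an integral embedding $N\to M$.

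The broad shape you describe (induction on rank, reduction via $N^{*}$ and $M^{*}$, transfer lemmas for $d[\,\cdot\,]$) does match Beli's actual strategy, so the outline is not wrong in direction. But at this level of detail it cannot be checked, and the steps you label as ``bookkeeping'' are where Beli's proof spends most of its length. For the purposes of this paper the correct move is the one the author makes: cite \cite{beli_representations_2006} rather than reprove the theorem. If you do want to reprove it, you would need to supply the reduction lemmas for $A_{i}$ and $d[\,\cdot\,]$ under dropping a leading BONG vector explicitly, not merely promise to quote them.
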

\begin{re}
	If $n\le m-2$ and $i=n+1$, although $S_{n+1}$ is undefined, the inequality  $A_{n}+A_{n+1}>2e+R_{n+1}-S_{n+1}$ still makes sense, because it contains $-S_{n+1}$ on both sides, which will be canceled naturally. 
\end{re}

Suppose that $N\subseteq M$. Put $O^{+}(M)\coloneqq\{\sigma\in O^{+}(FM)\mid \sigma(M)=M\}$ and
\begin{align}\label{local-XMN}
 X(M/N)\coloneqq\{\sigma\in O^{+}(FM)\mid N\subseteq \sigma(M)\}.
\end{align}
 For short, we denote by $\theta$ the usual spinor norm and write $\theta(M/N)$ (resp. $\theta(M)$) for $\theta(X(M/N))$ (resp. $\theta(O^{+}(M))$).

By virtue of the properties introduced in \cite[\S 4]{beli_integral_2003}, one can separate the calculation of integral spinor norms into several cases and then use the corresponding formulas.
\begin{defn}\label{defn:A}
	 Suppose that $M$ has a Jordan splitting $M=M_{1}\perp \cdots \perp M_{t}$. We say that $M$ has property A if $\rank M_{i}\le 2$ for any $i$ and 
	\begin{align*} 
		0<\ord\, (\mathfrak{n}(M_{j}))-\ord\,(\mathfrak{n}(M_{i}))<2(\ord\, (\mathfrak{s}(M_{j}))-\ord\,(\mathfrak{s}(M_{i})))
	\end{align*}
	for any $i<j$.
\end{defn}
\begin{re}\label{re:property-A-B}
		\begin{enumerate}[itemindent=-0.5em,label=\rm (\roman*)]		
		\item  From \cite[\S 7]{beli_integral_2003}, if $M$ does not have property A, then $\theta(M)=F^{\times}$ or $\mathcal{O}_{F}^{\times}F^{\times 2}$.

		\item From \cite[Lemma 4.3(i)]{beli_integral_2003}, Property A is equivalent to $R_{i}<R_{i+2}$ for all $1\le i\le m-2$.
			\end{enumerate} 
\end{re}

Before giving Beli's formulas for $\theta(M)$ (\cite[Theorems 1 and 3]{beli_integral_2003}) and $\theta(M/N)$ (\cite[Theorem II.2 and II.3]{beli_thesis_2001}), we introduce some notations. Put
\begin{align}
	\gamma(M,N)&\coloneqq\min_{1\le i\le m-2}\{\lfloor (R_{i+2}-S_{i})/2\rfloor\}, \label{gammaMN}\\
	\intertext{and when $M=N$, put}
	\gamma(M)&\coloneqq\gamma(M,M)=\min_{1\le i\le m-2}\{\lfloor (R_{i+2}-R_{i})/2\rfloor\}.\label{gammaM}
\end{align}

\begin{thm}\label{thm:integralspinornorm-A}
	If $M$ has property A, then
	\begin{align*}
		\theta(M)=G(a_{2}/a_{1})G(a_{3}/a_{2})\cdots G(a_{m}/a_{m-1})(1+\mathfrak{p}^{\gamma})F^{\times 2},
	\end{align*}
	where $\gamma=\gamma(M)$.
\end{thm}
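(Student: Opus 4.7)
The plan is to prove the two inclusions $\theta(M) \supseteq \prod_{i=1}^{m-1} G(a_{i+1}/a_i)(1+\mathfrak{p}^{\gamma}) F^{\times 2}$ and $\theta(M) \subseteq \prod_{i=1}^{m-1} G(a_{i+1}/a_i)(1+\mathfrak{p}^{\gamma}) F^{\times 2}$ separately, by exploiting the block structure of $M$ that property A forces.

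Property A forces the Jordan splitting $M = M_1 \perp \cdots \perp M_t$ to have $\rank M_i \le 2$, so the good BONG $\prec a_1,\ldots,a_m \succ$ groups naturally into unary or binary Jordan constituents. For each binary constituent $L_k$ spanned by consecutive BONG vectors $x_i, x_{i+1}$, I would first establish that $\theta(L_k) F^{\times 2} = G(a_{i+1}/a_i) F^{\times 2}$ using Lemma \ref{lem:ga-formula}, Proposition \ref{prop:G-g}, and the identification $\alpha(a(L_k)) = \alpha_1(L_k)$ from Remark \ref{re:alpha-binary}, which aligns the two definitions of the function $G$. This binary computation is the single most-used atomic input for both inclusions.

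For $\supseteq$, I would exhibit three families of isometries whose spinor norms collectively generate the right-hand side: (1) lifts to $O^{+}(M)$ of full rotations of each binary Jordan component, contributing $G(a_{i+1}/a_i)$ at binary blocks; (2) rotations in the binary subspace spanned by two consecutive BONG vectors that straddle a Jordan gap, contributing the $G$-factors at unary junctions via the same binary analysis; and (3) Eichler-type transformations between consecutive Jordan components whose spinor norms lie in $1+\mathfrak{p}^{\gamma}$, where the exponent $\gamma = \min_i \lfloor (R_{i+2}-R_i)/2 \rfloor$ arises precisely from how close adjacent scales are allowed to be under \eqref{eq:A}. For $\subseteq$, I would take an arbitrary $\sigma \in O^{+}(M)$ and argue that after multiplying by Eichler transformations in $(1+\mathfrak{p}^{\gamma}) F^{\times 2}$ one may assume $\sigma(M_i) = M_i$ for every $i$; then $\sigma$ splits as $\sigma_1 \perp \cdots \perp \sigma_t$ with $\sigma_k \in O^{+}(M_k)$, and $\theta(\sigma) = \prod_k \theta(\sigma_k)$ lies in the claimed product of $G$-factors by the binary/unary analysis. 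Property A is used decisively here: the strict inequalities in \eqref{eq:A} prevent genuine mixing of components beyond elementary transformations whose spinor norms are already in $1+\mathfrak{p}^{\gamma}$.

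The main obstacle, in my view, is this reduction step in $\subseteq$: proving that every $\sigma \in O^{+}(M)$ can be brought, modulo an Eichler transformation of controlled spinor norm, to a constituent-preserving isometry. This requires tracking $\sigma(M_i)$ with enough precision to guarantee that the corrective Eichler maps are genuine integral isometries of $M$ and do not destroy the goodness of the BONG. The cleanest route seems to be induction on the number $t$ of Jordan constituents, peeling off one constituent at a time via the BONG reduction formulas and using property A in its sharp form to bound the spinor norms of the correction factors by $1+\mathfrak{p}^{\gamma}$; the complementary remark in Remark \ref{re:property-A-B}(i) that property A is BONG-independent is what makes such an induction well-posed.
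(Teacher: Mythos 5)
First, note that the paper gives no proof of this statement at all: it is imported verbatim from Beli (Theorem 1 of \emph{Integral spinor norms over dyadic local fields}), so there is no internal argument to compare against. Judged on its own terms, your outline has the right architecture --- binary contributions supply the $G$-factors, Eichler-type maps supply $(1+\mathfrak{p}^{\gamma})$, and the inclusion $\subseteq$ is the hard direction --- but both of its load-bearing steps are missing rather than merely compressed.

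Concretely: (1) the ``atomic input'' $\theta(L_k)F^{\times 2}=G(a_{i+1}/a_i)F^{\times 2}$ for a binary constituent does not follow from Lemma \ref{lem:ga-formula}, Proposition \ref{prop:G-g} and Remark \ref{re:alpha-binary}. Those results only define and interrelate the abstract subgroups $g(a)$ and $G(a)$ of $F^{\times}/F^{\times 2}$; none of them asserts that these subgroups coincide with spinor norms of integral rotations of a binary lattice. That identification is itself the analytic core of Beli's paper, generalizing the Earnest--Hsia $2$-adic computations, and cannot be treated as a consequence of the definitions. (2) For $\subseteq$, the claim that any $\sigma\in O^{+}(M)$ can be corrected by Eichler transformations with spinor norms in $(1+\mathfrak{p}^{\gamma})F^{\times 2}$ so as to preserve every Jordan constituent is exactly the generation-type statement that is \emph{not} available over general dyadic fields (the O'Meara--Pollak generation theorem needs restrictive hypotheses such as $e=1$); you flag it as ``the main obstacle'' but offer no argument, and Beli's actual proof circumvents it by working with BONGs and reduction maps of the type $M\mapsto M'$, $M\mapsto M^{*}$ rather than by splitting isometries along Jordan components. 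A further technical point for $\supseteq$: consecutive BONG vectors $x_i,x_{i+1}$ straddling a Jordan gap are orthogonal, but $\mathcal{O}_F x_i\perp\mathcal{O}_F x_{i+1}$ is not in general an orthogonal summand of $M$, so a rotation of that plane extended by the identity need not lie in $O^{+}(M)$; producing the junction factors $G(a_{i+1}/a_i)$ therefore also requires genuine work. As it stands the proposal is a plausible roadmap, not a proof.
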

\begin{thm} \label{thm:integralspinornorm}
	$\theta(M)\subseteq \mathcal{O}_{F}^{\times}F^{\times 2}$ if and only if  the following conditions hold:
		\begin{enumerate}[itemindent=-0.5em,label=\rm (\roman*)]
		\item  for $1\le i\le m-1$, we have $G(a_{i+1}/a_{i})\subseteq \mathcal{O}_{F}^{\times}F^{\times 2}$;
		
		\item  for $1\le i\le m-2$ such that $R_{i}=R_{i+2}$, we have $(R_{i+1}-R_{i})/2\equiv e\pmod{2}$.
	\end{enumerate} 
 
\end{thm}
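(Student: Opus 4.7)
The plan is to reduce both directions to the explicit formula of Theorem \ref{thm:integralspinornorm-A} together with the dichotomy for when properties A and B fail, as summarized in Remark \ref{re:property-A-B}. The key observation underlying the sufficiency direction is that $\gamma=\min_{1\le i\le m-2}\lfloor (R_{i+2}-R_{i})/2\rfloor\ge 0$ by the good-BONG inequality \eqref{eq:GoodBONGs}, so $(1+\mathfrak{p}^{\gamma})F^{\times 2}\subseteq \mathcal{O}_{F}^{\times}F^{\times 2}$. Hence if $M$ has property A, Theorem \ref{thm:integralspinornorm-A} combined with (i) immediately gives $\theta(M)\subseteq \mathcal{O}_{F}^{\times}F^{\times 2}$.

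For the sufficiency direction ((i) and (ii) imply the inclusion) the remaining work is to handle the case when property A fails. Under this assumption one still has $\theta(M)\subseteq \mathcal{O}_{F}^{\times}F^{\times 2}$ or $\theta(M)=F^{\times}$ by Remark \ref{re:property-A-B}(ii)--(iii), so it suffices to rule out the latter. I would argue that a failure of property A can be traced to a specific position $i$ where either $R_{i+1}-R_{i}$ lies in a prohibited regime (falling into one of the four cases of Proposition \ref{prop:G-g}) or a collision $R_{i}=R_{i+2}$ occurs; in the former regime (i) restricts the admissible residues via the formulas in Proposition \ref{prop:G-g}, while in the latter regime condition (ii) is precisely the parity needed to keep the spinor norm inside the units. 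In this way, a combined BONG-modification/case-split would establish the inclusion.

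For the necessity direction I would generate explicit elements of $\theta(M)$. If (i) fails at index $i$, then choosing a planar rotation supported on $\operatorname{span}(x_{i},x_{i+1})$ (extended by identity on the remaining BONG vectors) yields an element of $O^{+}(M)$ whose spinor norm lies in $G(a_{i+1}/a_{i})\setminus \mathcal{O}_{F}^{\times}F^{\times 2}$, contradicting the hypothesis. If instead (ii) fails at some $i$ with $R_{i}=R_{i+2}$, a ternary symmetry on $\operatorname{span}(x_{i},x_{i+1},x_{i+2})$ produces a spinor norm of valuation $R_{i+1}-R_{i}$, and the parity failure $(R_{i+1}-R_{i})/2\not\equiv e\pmod 2$ prevents this from lying in $\mathcal{O}_{F}^{\times}F^{\times 2}$.

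The main obstacle I anticipate is the sufficiency direction, specifically the combinatorial bookkeeping that converts the pointwise hypothesis (i) on $G(a_{i+1}/a_{i})$ into the global structural conclusion that property A holds (possibly after a BONG modification), and the verification that (ii) exactly compensates for the $R_{i}=R_{i+2}$ collisions, which are the only obstructions remaining. Once these local-to-global and parity checks are laid out case by case via Propositions \ref{prop:Rproperty}, \ref{prop:alphaproperty}, and \ref{prop:G-g}, the result follows from a direct invocation of Theorem \ref{thm:integralspinornorm-A}.
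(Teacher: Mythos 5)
This statement is not proved in the paper at all: it is quoted verbatim as Theorem 3 of Beli's \emph{Integral spinor norms over dyadic local fields} \cite{beli_integral_2003}, so there is no internal proof to compare yours against; the only fair comparison is with Beli's original argument, which occupies his Sections 6--7 and is substantially longer and more delicate than your sketch.

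On the merits, your outline has concrete gaps. First, the ``key observation'' for sufficiency is false as stated: $\gamma\ge 0$ does not give $(1+\mathfrak{p}^{\gamma})F^{\times 2}\subseteq\mathcal{O}_{F}^{\times}F^{\times 2}$, since $1+\mathfrak{p}^{0}=\mathcal{O}_{F}$ contains $\pi$; you need $\gamma\ge 1$, and the borderline case $\gamma=0$ (in particular $R_{i}=R_{i+2}$) is exactly where condition (ii) lives, so it cannot be absorbed into the property-A computation. Second, the entire sufficiency argument when property A fails --- which is the actual content of Beli's Theorem 3 and rests on his Section 7 analysis (e.g.\ his Lemma 7.2, which the present paper does invoke elsewhere) classifying $\theta(M)$ as $\mathcal{O}_{F}^{\times}F^{\times 2}$ versus $F^{\times}$ --- is reduced to ``a combined BONG-modification/case-split would establish the inclusion,'' which is not an argument. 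Third, in the necessity direction your constructions do not work as described: the vectors of a BONG do not in general span an orthogonal direct summand of $M$, so a rotation of $Fx_{i}+Fx_{i+1}$ extended by the identity need not preserve $M$; the inclusion $G(a_{i+1}/a_{i})\subseteq\theta(M)$ is itself a nontrivial theorem of Beli, not a one-line construction. Finally, for (ii) the element forced into $\theta(M)$ when $R_{i}=R_{i+2}$ has order $e+(R_{i+1}-R_{i})/2$ --- whose parity is exactly what condition (ii) controls --- not order $R_{i+1}-R_{i}$ as you claim, so your parity argument does not match the statement being proved.
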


For $1\le i\le m-1$, let $\xi_{i}= \varepsilon_{1,i+1}\eta_{1,i-1}\in \mathcal{O}_{F}^{\times }$.

 \begin{thm}\label{thm:Beli-II2}
	Assume that $m-n\le 2$.
	\begin{enumerate}[itemindent=-0.5em,label=\rm (\roman*)]
		\item  	If $R_{i+2}>S_{i}$ for each $1\le i\le  m-2$, put
		\begin{align}\label{Gi}
			G_{i}=\begin{cases}
				G(\pi^{R_{i+1}-S_{i}}\xi_{i}) &\text{if $i\le n$ and $\sum_{k=1}^{i}(R_{k}-S_{k})\equiv 0\pmod{2}$}, \\
				N(-a_{1,i+1}b_{1,i-1})  &\text{otherwise}.
			\end{cases}
		\end{align}
		Then 
		\begin{align*}
			\theta(M/N)=\theta(M)G_{1}\cdots G_{m-1}(1+\mathfrak{p}^{\gamma})F^{\times 2},
		\end{align*}
		where $\gamma=\gamma(M,N)$ and we ignore the term $(1+\mathfrak{p}^{\gamma})F^{\times 2} $ if $m\le 2$.
		
		\item If $R_{j+2}\le S_{j}$ for some $1\le j\le  m-2$, then $\theta(M/N)\supseteq \mathcal{O}_{F}^{\times} F^{\times 2}$.
		
	\end{enumerate}
\end{thm}
\begin{re}\label{re:m-n3}
	When $m-n\ge 3$, $\theta(M/N)=F^{\times}$ (cf. \cite[\S 4]{hsia_indefinite_1998}).
\end{re}
For $1\le i\le \min\{m-1,n\}$, let $T_{i}=\max\{S_{i-1},R_{i+1}\}-\min\{S_{i},R_{i+2}\}$, where we ignore $S_{i-1}$ in the maximum if $i=1$ and we ignore $R_{i+2}$ in the minimum if $i=m-1$.
\begin{thm}\label{thm:Beli-II3}
 	$\theta(M/N)\subseteq \mathcal{O}_{F}^{\times}F^{\times 2}$ if and only if the following conditions hold:
	\begin{enumerate}[itemindent=-0.5em,label=\rm (\roman*)]
		 \item $R_{1}\equiv\cdots\equiv R_{m}\equiv S_{1}\equiv \cdots \equiv S_{n}\pmod{2}$.
		
		\item For any $1\le i\le n$ such that $R_{i+2}\le S_{i} $, one of conditions (a) and (b) holds:
		\begin{enumerate}[itemindent=-1em,label=\rm (\roman*)]
			\item[(a)] $R_{i+1}+R_{i+2}=S_{i}+S_{i+1} $ and $(R_{i+2}-R_{i+1})/2\equiv (S_{i+1}-S_{i})/2 \equiv e\pmod{2}$;
			
			\item[(b)] $R_{i+2}=S_{i}$ and either $R_{i+2}-R_{i+1}=-2e$ or $S_{i+1}-S_{i}=-2e$.
			
		\end{enumerate}
		\item For any $1\le i\le m-1$, we have either $d(-\xi_{i})=2e$ or $i\le n$ and $G(\pi^{T_{i}}\xi_{i})\subseteq \mathcal{O}_{F}^{\times }F^{\times 2}$.
		
		\item $\theta(M)\subseteq \mathcal{O}_{F}^{\times }F^{\times 2}$ and  $\theta(N)\subseteq \mathcal{O}_{F}^{\times }F^{\times 2}$.
	\end{enumerate}
\end{thm}
 
\section{Liftings over local fields}\label{sec:lifting}
 
 In this section, we assume that $F$ is a non-archimedean local field and $E$ is a finite extension of $F$ at the primes $\mathfrak{P}|\mathfrak{p}$. Denote by $e_{\mathfrak{P}}=\ord_{\mathfrak{P}}(2)$, $e_{\mathfrak{p}}=\ord_{\mathfrak{p}}(2)$, $e_{\pmid}=e(\mathfrak{P}|\mathfrak{p})$ the ramification index and  $f_{\pmid}=f(\mathfrak{P}|\mathfrak{p})$ the inertia degree. Clearly, $e_{\mathfrak{P}}=e_{\mathfrak{p}}e_{\pmid}$.
 
Write $ \widetilde{d}: E^{\times}/E^{\times 2} \to \mathbb{N}\cup \{\infty\} $, $ \widetilde{d}(c)\coloneqq\ord_{\mathfrak{P}}(c^{-1}\mathfrak{d}_{E}(c)) $ for the lifting over $E$ of the function $ d$. For short, also write $\ord_{\mathfrak{P}}(c)$ instead of $\ord_{\mathfrak{P}}(\tilde{c})$ for $c\in F^{\times}$, and $\ord_{\mathfrak{P}}(\mathfrak{c})$ instead of $\ord_{\mathfrak{P}}(\tilde{\mathfrak{c}})$ for any fractional ideal $\mathfrak{c}$ of $F$.

Let $M$ be an integral $\mathcal{O}_{F}$-lattice of rank $m$ and $\widetilde{M}=M\otimes_{\mathcal{O}_{F}} \mathcal{O}_{E}$. When $F$ is non-dyadic, we denote by $ J_{i}(M) $ the Jordan component of $M$, with possible zero rank and $ \mathfrak{s}(J_{i}(M))=\mathfrak{p}^{i} $. Note that $J_{i}(M)$ is unique when $F$ is non-dyadic. When $F$ is dyadic, we let $R_{i}=R_{i}(M)$, $T_{j}^{(i)}=T_{j}^{(i)}(M)$ and $\alpha_{i}=\alpha_{i}(M)$ (cf. Definition \ref{defn:alpha}).
 
\begin{lem}\label{lem:defectP-p}
	Suppose $F$ to be non-dyadic or dyadic. Let $c\in  F^{\times}$ and $\mathfrak{c}$ be a fractional ideal of $F$. Then 
	\begin{enumerate}[itemindent=-0.5em,label=\rm (\roman*)]
		\item $\ord_{\mathfrak{P}}(c)=\ord_{\mathfrak{p}}(c)e_{\pmid}$ and	$\ord_{\mathfrak{P}}(\mathfrak{c})=\ord_{\mathfrak{p}}(\mathfrak{c})e_{\pmid}$.
		
		\item  When $F$ is dyadic, $ \widetilde{d}(c)\ge d(c)e_{\pmid}$, and the equality holds if and only if one of the following conditions holds:
		\begin{enumerate}[itemindent=-1em,label=\rm (\roman*)]
			\item[(a)]  $d(c)<2e_{\mathfrak{p}}$ and $e_{\pmid}$ is odd;
			
			\item[(b)]  $d(c)=2e_{\mathfrak{p}}$ and $f_{\pmid}$ is odd, i.e., $c\not\in \mathcal{O}_{E}^{\times 2}$;
			
			\item[(c)]  $d(c)=\infty$.
		\end{enumerate} 
		In particular, if $[E:F]$ is odd, then $\widetilde{d}(c)=d(c)e_{\pmid}$.
	\end{enumerate} 
\end{lem}	
 	\begin{proof}
 	(i) This is clear from \cite[16:2]{omeara_quadratic_1963} and the definitions of $\ord_{\mathfrak{p}}$ and $\ord_{\mathfrak{P}}$.
 	
 	(ii)  Let $c=\eta^{2}+a$ with $d(c)=\ord_{\mathfrak{p}}(a)$. Then, by (i), $\widetilde{d}(c)\ge \ord_{\mathfrak{P}}(a)= \ord_{\mathfrak{p}}(a)e_{\pmid}=d(c)e_{\pmid}$.
 	
 	(a) If $d(c)=0$, then $\ord_{\mathfrak{p}}(c)$ is odd. Then $\widetilde{d}(c)=0$ if and only if $\ord_{\mathfrak{P}}$ is odd, i.e., $e_{\pmid}$ is odd; if $1\le d(c)<2e_{\mathfrak{p}}$, then $d(c)$ is odd and $1\le \widetilde{d}(c)<2e_{\mathfrak{P}}$. By \cite[63:5]{omeara_quadratic_1963},  $\widetilde{d}(c)=d(c)e_{\pmid}$ if and only if  $d(c)e_{\pmid}$ is odd, i.e., $e_{\pmid}$ is odd.
 	
 	(b) If $d(c)=2e_{\mathfrak{p}}$, i.e., $c\in \Delta_{F}\mathcal{O}_{F}^{\times 2}$, then $\widetilde{d}(c)\ge d(c)e_{\pmid}=2e_{\mathfrak{P}}$ and thus $c\in \Delta_{E}\mathcal{O}_{E}^{\times 2}\cup \mathcal{O}_{E}^{\times 2}$. Note from \cite[63:3]{omeara_quadratic_1963} that $F(\sqrt{c})=F(\sqrt{\Delta_{F}})$ is unramified over $F$. So $c\in \mathcal{O}_{E}^{\times 2}$ if and only if  $F(\sqrt{\Delta_{F}})\subseteq K $, where $K $ is the unique maximal unramified extension over $F$ contained in $E$ and $[K:F]=f_{\pmid}$. Hence, $c\in \mathcal{O}_{E}^{\times 2}$ if and only if $f_{\pmid}$ is even.
 	
 	(c) If $d(c)=\infty$, i.e., $c\in \mathcal{O}_{F}^{\times 2}$, then $c\in \mathcal{O}_{E}^{\times 2}$, i.e., $ \widetilde{d}(c)=\infty$. 
 	
 	 If $[E:F]$ is odd, by \cite[16:4]{omeara_quadratic_1963}, both $e_{\pmid}$ and $f_{\pmid}$ are odd. So one of conditions (a)-(c) holds, so does the equality.
 \end{proof} 
 \begin{re}\label{re:defectP-p}
 	When $F$ is non-dyadic and $c\in F^{\times}$, if $\ord_{\mathfrak{p}}(c)$ is even, applying a similar argument as Lemma \ref{lem:defectP-p}(ii)(b) and (c), we see that $\widetilde{c}\in E^{\times 2}$ if and only if $c\in F^{\times 2}$ or $c\in \Delta_{F}F^{\times 2}$ and $f_{\pmid}$ is even. 
 \end{re}

\begin{prop}\label{prop:scale-norm-volume}
	Suppose $F$ to be non-dyadic or dyadic. Then
		\begin{enumerate}[itemindent=-0.5em,label=\rm (\roman*)]
    		\item $\ord_{\mathfrak{P}}(\mathfrak{s}(\widetilde{M}))=\ord_{\mathfrak{p}}(\mathfrak{s}(M))e_{\pmid}$, $\ord_{\mathfrak{P}}(\mathfrak{n}(\widetilde{M}))=\ord_{\mathfrak{p}}(\mathfrak{n}(M))e_{\pmid}$ and $\ord_{\mathfrak{P}}(\mathfrak{v}(\widetilde{M}))=\ord_{\mathfrak{p}}(\mathfrak{v}(M))e_{\pmid}$.
    		
    		\item  $\mathfrak{s}(\widetilde{M})=\mathfrak{s}(M)\mathcal{O}_{E}$, $\mathfrak{n}(\widetilde{M})=\mathfrak{n}(M)\mathcal{O}_{E}$ and $\mathfrak{v}(\widetilde{M})=\mathfrak{v}(M)\mathcal{O}_{E}$.
    		
    		\item For any fractional ideal $\mathfrak{a}$ of $F$, $\widetilde{M}$ is $\widetilde{\mathfrak{a}}$-modular if and only if $M$ is $\mathfrak{a}$-modular.
    	\end{enumerate} 
     \end{prop}
\begin{proof}
	  (i) This follows from \cite[82:8]{omeara_quadratic_1963}, the definition of volume (cf. \cite[p.\hskip 0.1cm 229]{omeara_quadratic_1963}), the property $\ord(\mathfrak{a}+\mathfrak{b})=\min\{\ord(\mathfrak{a}),\ord(\mathfrak{b})\}$ (\cite[\S22]{omeara_quadratic_1963}) and Lemma \ref{lem:defectP-p}(i).
	  
	  (ii) This follows from (i). 
	  
	  (iii) This follows from (ii) and the property $\mathfrak{v}(M)=\mathfrak{s}(M)^{m}$ of modular lattices.
\end{proof}

\begin{lem}\label{lem:extension-Jordansplittings}
	 Suppose $F$ to be non-dyadic. If $M$ has a Jordan splitting $ M=J_{0}(M)\perp J_{1}(M)\perp \cdots \perp J_{t}(M)$, then $\widetilde{M}$ has a Jordan splitting 
	  \begin{align*}
	 	\widetilde{M}&=J_{0}(\widetilde{M})\perp J_{e_{\pmid}}(\widetilde{M})\perp \cdots \perp J_{te_{\pmid}}(\widetilde{M})=\widetilde{J_{0}(M)}\perp \widetilde{J_{e_{\pmid}}(M)}\perp \cdots \perp \widetilde{J_{te_{\pmid}}(M)}
	 \end{align*}
	  and $\rank J_{ie_{\pmid}}(\widetilde{M})=\rank \widetilde{J_{i}(M)}=\rank J_{i}(M)$ for $0\le i\le t$.
\end{lem}
\begin{proof}
 This follows by Proposition \ref{prop:scale-norm-volume} and the fact that  $J_{ie_{\pmid}}(\widetilde{M})=\widetilde{J_{i}(M)}$.
\end{proof}

\begin{lem}\label{lem:extension-BONG}
		Suppose $F$ to be dyadic. If  $M\cong \prec a_{1},\ldots, a_{m}\succ$ relative to a good BONG $ x_{1},\ldots, x_{m} $, then $\widetilde{M}\cong \prec \widetilde{a}_{1},\ldots,\widetilde{a}_{m}\succ $ relative to the good BONG $ \widetilde{x}_{1},\ldots, \widetilde{x}_{m} $.
\end{lem}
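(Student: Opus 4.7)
The plan is to verify that the good-BONG conditions from Lemma~\ref{lem:2.2} are preserved under the scalar extension $\mathcal{O}_F\hookrightarrow\mathcal{O}_E$ and then invoke that lemma. Set $R_i=\ord_{\mathfrak{p}}(a_i)$ and $\tilde{R}_i=\ord_{\mathfrak{P}}(a_i)$; by Lemma~\ref{lem:defectP-p}(i) we have $\tilde{R}_i=R_ie_{\pmid}$, and since $\ord_{\mathfrak{P}}(2)=\ord_{\mathfrak{p}}(2)e_{\pmid}$ the ramification indices satisfy $e_{\mathfrak{P}}=e_{\mathfrak{p}}e_{\pmid}$. The orthogonality relations $B(x_i,x_j)=0$ and the Gram scalars $Q(x_i)=a_i$ are manifestly preserved by tensoring with $\mathcal{O}_E$.

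First I would verify \eqref{eq:GoodBONGs} over $E$: since $e_{\pmid}>0$, the inequality $R_i\le R_{i+2}$ multiplies to $\tilde{R}_i\le\tilde{R}_{i+2}$ for all $1\le i\le m-2$.

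Next I would verify \eqref{eq:BONGs} over $E$. Multiplying $R_{i+1}-R_i+2e_{\mathfrak{p}}\ge 0$ by $e_{\pmid}$ yields
\[
\tilde{R}_{i+1}-\tilde{R}_i+2e_{\mathfrak{P}}=(R_{i+1}-R_i+2e_{\mathfrak{p}})e_{\pmid}\ge 0,
\]
while Lemma~\ref{lem:defectP-p}(ii), which supplies $\tilde{d}(c)\ge d(c)e_{\pmid}$, yields
\[
\tilde{R}_{i+1}-\tilde{R}_i+\tilde{d}(-a_ia_{i+1})\ge(R_{i+1}-R_i+d(-a_ia_{i+1}))e_{\pmid}\ge 0.
\]
By Lemma~\ref{lem:2.2}, $\{x_1,\ldots,x_m\}$ is therefore a good BONG for \emph{some} $\mathcal{O}_E$-lattice $M'$ inside $E\otimes_F FM$ with Gram scalars $a_1,\ldots,a_m$.

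The remaining and only mildly delicate step is to identify $M'$ with $\tilde{M}=M\otimes_{\mathcal{O}_F}\mathcal{O}_E$; this is what I expect to be the main point requiring care. I would argue it by functoriality: the reconstruction of the lattice from a good BONG is obtained recursively via the selection of a norm generator together with orthogonal projection onto its complement, and both operations commute with the flat base change $\mathcal{O}_F\to\mathcal{O}_E$. Consequently the $\mathcal{O}_E$-lattice reconstructed from $\{x_1,\ldots,x_m\}$ coincides with $\tilde{M}$, giving $\tilde{M}\cong\prec a_1,\ldots,a_m\succ$. The arithmetic inequalities above are immediate consequences of Lemma~\ref{lem:defectP-p} and present no substantive obstacle.
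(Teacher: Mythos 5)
Your argument is correct in substance, but note that the paper does not actually prove this lemma in-house: its entire proof is the citation ``See \cite[Lemma 8.1(iii)]{He22}'', so you are supplying a proof where the author defers to earlier work. Your route --- verify the hypotheses of Lemma \ref{lem:2.2} over $E$ via $\tilde{R}_i=R_ie_{\pmid}$, $e_{\mathfrak{P}}=e_{\mathfrak{p}}e_{\pmid}$ and $\tilde{d}(c)\ge d(c)e_{\pmid}$, then identify the resulting $\mathcal{O}_E$-lattice with $\tilde{M}$ --- works, and the numerical verifications are exactly right. The one place to tighten is the step you yourself flag: rather than appealing to a somewhat loose ``functoriality of the reconstruction,'' it is cleaner to bypass Lemma \ref{lem:2.2} altogether and verify the recursive \emph{definition} of a good BONG directly for $\tilde{M}$. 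Namely, $x_1$ is a norm generator of $\tilde{M}$ because $Q(x_1)\mathcal{O}_E=\mathfrak{n}(M)\mathcal{O}_E=\mathfrak{n}(\tilde{M})$ by Proposition \ref{prop:scale-norm-volume}(i); the projection satisfies $\pr_{x_1^{\perp}}(\tilde{M})=\widetilde{\pr_{x_1^{\perp}}(M)}$ since every element of $\tilde{M}$ is an $\mathcal{O}_E$-combination of elements of $M$ and $\pr_{x_1^{\perp}}$ is linear; induction on $m$ then shows $x_2,\ldots,x_m$ is a BONG for $\pr_{x_1^{\perp}}(\tilde{M})$, and goodness is preserved because $R_i\le R_{i+2}$ implies $\tilde{R}_i\le\tilde{R}_{i+2}$. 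This removes the detour through ``some lattice $M'$'' and the implicit appeal to uniqueness of the lattice determined by a BONG. With that adjustment your proof is complete and self-contained, which is arguably more informative than the paper's external citation.
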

\begin{proof}
	See \cite[Lemma 8.1(iii)]{He22}.
\end{proof}

 In the rest of this section, we assume that $F$ is dyadic and write $\widetilde{R}_{i}$, $\widetilde{T}_{j}^{(i)}$ and $\widetilde{\alpha}_{i}$ for the liftings of $R_{i}$, $T_{j}^{(i)}$ and $\alpha_{i}$ accordingly. Namely, $\widetilde{R}_{i}=R_{i}(\widetilde{M})$, $\widetilde{T}_{j}^{(i)}=T_{j}^{(i)}(\widetilde{M})$ and $\widetilde{\alpha}_{i}=\alpha_{i}(\widetilde{M})$.

			\begin{prop}\label{prop:R-extension}
			
			Let $1\le i\le m$.
				\begin{enumerate}[itemindent=-0.5em,label=\rm (\roman*)]
				\item $\widetilde{R}_{i}=R_{i}e_{\pmid}$.
				
				\item   $\widetilde{R}_{i}=0$ if and only if $R_{i}=0$.
				
				\item  $\widetilde{R}_{i}=-2e_{\mathfrak{P}}$  if and only if $R_{i}=-2e_{\mathfrak{p}}$.
				
				\item   $\widetilde{R}_{i}=1$ if and only if $R_{i}=1$ and $e_{\pmid}=1$.	
			\end{enumerate} 	
 
		\end{prop}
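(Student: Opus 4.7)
The proof should be essentially a direct reduction to the two extension lemmas already available, together with the basic multiplicativity $e_{\mathfrak{P}}=e_{\pmid}\,e_{\mathfrak{p}}$ of ramification indices. The key observation is that $R_{i}$ and $\tilde{R}_{i}$ are computed from the \emph{same} coefficients $a_{i}$, by Lemma \ref{lem:extension-BONG}, so one only needs to track how $\ord$ changes under extension.

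For part (i), I would invoke Lemma \ref{lem:extension-BONG} to write $\tilde{M}\cong\prec a_{1},\ldots,a_{m}\succ$ with respect to the same good BONG $\{x_{1},\ldots,x_{m}\}$. Hence $\tilde{R}_{i}=\ord_{\mathfrak{P}}(a_{i})$, and Lemma \ref{lem:defectP-p}(i) gives $\ord_{\mathfrak{P}}(a_{i})=\ord_{\mathfrak{p}}(a_{i})\,e_{\pmid}=R_{i}e_{\pmid}$. Parts (ii)--(iv) are then purely arithmetic consequences of (i), using that $e_{\pmid}\ge 1$ is a positive integer. Explicitly: $R_{i}e_{\pmid}=0$ forces $R_{i}=0$; for (iv), $R_{i}e_{\pmid}=1$ forces $R_{i}=e_{\pmid}=1$ since both are positive integers.

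The only slightly less automatic step is (iii): one uses $e_{\mathfrak{P}}=e_{\pmid}\,e_{\mathfrak{p}}$ (which follows from $\ord_{\mathfrak{P}}(2)=\ord_{\mathfrak{p}}(2)\,e_{\pmid}$ via Lemma \ref{lem:defectP-p}(i) applied to $c=2$) so that the equation $R_{i}e_{\pmid}=-2e_{\mathfrak{P}}=-2e_{\mathfrak{p}}e_{\pmid}$ simplifies to $R_{i}=-2e_{\mathfrak{p}}$.

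There is no real obstacle here; the statement is essentially a compatibility check for the invariant $R_{i}$ under scalar extension. The non-trivial input, namely that a good BONG of $M$ remains a good BONG of $\tilde{M}$, has already been established in \cite{He22} and cited as Lemma \ref{lem:extension-BONG}. I would keep the write-up to a few lines, presenting (i) first and then deducing (ii)--(iv) as immediate corollaries.
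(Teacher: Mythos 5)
Your proposal matches the paper's proof: part (i) follows from Lemma \ref{lem:extension-BONG} together with the order relation $\ord_{\mathfrak{P}}(a_{i})=\ord_{\mathfrak{p}}(a_{i})e_{\pmid}$ of Lemma \ref{lem:defectP-p}(i), and (ii)--(iv) are the same arithmetic consequences using $e_{\mathfrak{P}}=e_{\mathfrak{p}}e_{\pmid}$. The argument is correct and essentially identical to the paper's.
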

	 	\begin{proof}
		For (i), from Lemma \ref{lem:extension-BONG}, we have $\widetilde{R}_{i}=\ord_{\mathfrak{P}}(a_{i})$ and $R_{i}=\ord_{\mathfrak{p}}(a_{i})$. So the equality holds by Lemma \ref{lem:defectP-p}(i). Then (ii)-(iv) are clear from (i). 
	\end{proof}
 \begin{prop}\label{prop:alpha-extension}	
		 Let $1\le i\le m-1$.
				\begin{enumerate}[itemindent=-0.5em,label=\rm (\roman*)]
				\item $\widetilde{T}_{0}^{(i)}=T_{0}^{(i)}e_{\pmid}$.
				
				\item  For $j=1,\ldots,m-1$, $\widetilde{T}_{j}^{(i)}\ge T_{j}^{(i)}e_{\pmid}$, and the equality holds if $e_{\pmid}$ is odd and $d(-a_{j}a_{j+1})\not=2e_{\mathfrak{p}}$, or $f_{\pmid}$ is odd and $d(-a_{j}a_{j+1})=2e_{\mathfrak{p}}$.
				
				\item  $\widetilde{\alpha}_{i}\ge \alpha_{i}e_{\pmid}$, and the equality holds if $e_{\pmid}$ is odd, in particular, if $[E:F]$ is odd.
				
				\item $\widetilde{\alpha}_{i}=0$ if and only if $\alpha_{i}=0$.
				
				\item  $\widetilde{\alpha}_{i}=1$ if and only if $\alpha_{i}=1$ and $e_{\pmid}=1$.		
			\end{enumerate} 	
	 
	\end{prop}	 
		\begin{proof}	
		 (i) This follows from Proposition \ref{prop:R-extension}(i).
		 
		 (ii) This follows from Proposition \ref{prop:R-extension}(i) and Lemma \ref{lem:defectP-p}(ii). 
			
		(iii) First, the inequality holds from (i), (ii) and \eqref{T}. By \cite[16:4]{omeara_quadratic_1963}, we may assume that $e_{\pmid}$ is odd. It is sufficient to show $\widetilde{\alpha}_{i}\le \alpha_{i}e_{\pmid}$.
		
	    If $\alpha_{i}=T_{0}^{(i)}$, by (i), $\widetilde{\alpha}_{i}\le \widetilde{T}_{0}^{(i)}=T_{0}^{(i)}e_{\pmid}=\alpha_{i}e_{\pmid}$. If $\alpha_{i}\not=T_{0}^{(i)}$, then $\alpha_{i}=T_{j}^{(i)}<T_{0}^{(i)}$ for some $1\le j\le m-1$.  Hence, Proposition \ref{prop:T} implies that $d(-a_{j}a_{j+1})<2e_{\mathfrak{p}}$. So, by (ii),
	     \begin{align*}
	     	\widetilde{\alpha}_{i}\le \widetilde{T}_{j}^{(i)}=T_{j}^{(i)}e_{\pmid}=\alpha_{i}e_{\pmid}.
	     \end{align*}

	     (iv) This follows from Propositions \ref{prop:Rproperty}(ii), \ref{prop:R-extension}(i) and $e_{\mathfrak{P}}=e_{\mathfrak{p}}e_{\pmid}$.
	     
	     (v) This follows from (iii) immediately.
 \end{proof}

Let $N$ be another $\mathcal{O}_{F}$-lattice of rank $n$ and $\widetilde{N}=N\otimes_{\mathcal{O}_{F}} \mathcal{O}_{E}$. Then we write $\widetilde{d}[ca_{1,i}b_{1,j}]$ and $\widetilde{A}_{i}$ for the liftings of the quantities $d[ca_{1,i}b_{1,j}]$ and $A_{i}$, as defined in \eqref{defn:d[ab]} and \eqref{A-i-M-N}. Precisely, $\widetilde{d}[ca_{1,i}b_{1,j}]\coloneqq\min\{\widetilde{d}(ca_{1,i}b_{1,j}),\widetilde{\alpha}_{i},\widetilde{\beta}_{j}\}$ and $\widetilde{A}_{i}\coloneqq A_{i}(\widetilde{M},\widetilde{N})$. 

Based on Lemma \ref{lem:defectP-p}(ii), Propositions \ref{prop:R-extension}(i) and \ref{prop:alpha-extension}(iii), one can show the following three propositions by multiplying or dividing those equalities or inequalities by $e_{\pmid}$.
	 \begin{prop}
	Let $c\in F^{\times}$.
	\begin{enumerate}[itemindent=-0.5em,label=\rm (\roman*)]
		\item For $1\le i\le m$ and $1\le j\le n$, we have $\widetilde{d}[ca_{1,i}b_{1,j}]\ge d[ca_{1,i}b_{1,j}]e_{\pmid}$, and the equality holds if $[E:F]$ is odd.
		
		\item   For $1\le i\le n+1$, we have $\widetilde{A}_{j}\ge A_{j}e_{\pmid}$, and the equality holds if $[E:F]$ is odd.
	\end{enumerate} 
 
\end{prop}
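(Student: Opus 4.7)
The plan is to derive both assertions by reducing them to the termwise scaling behaviour of the ingredients already established in Lemma \ref{lem:defectP-p}(ii), Proposition \ref{prop:R-extension}(i), Proposition \ref{prop:alpha-extension}(iv), and Lemma \ref{lem:odd-degree-invariants}. Since the quantities $d[\cdot]$ and $A_{i}$ are defined as minima of finite lists of expressions over $F$, and their tilded counterparts are the same minima computed over $E$, it suffices to compare corresponding entries after scaling by $e_{\pmid}$ and then apply the monotonicity of $\min$.

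For (i), unfolding \eqref{defn:d[ab]} one has
\[
d[ca_{1,i}b_{1,j}]=\min\{d(ca_{1,i}b_{1,j}),\alpha_{i},\beta_{j}\},
\]
and the same formula holds over $E$ with every quantity replaced by its tilded version. The defect entry obeys $\tilde{d}(ca_{1,i}b_{1,j})\ge d(ca_{1,i}b_{1,j})e_{\pmid}$ by Lemma \ref{lem:defectP-p}(ii), while Proposition \ref{prop:alpha-extension}(iv) applied to $M$ and $N$ respectively yields $\tilde{\alpha}_{i}\ge \alpha_{i}e_{\pmid}$ and $\tilde{\beta}_{j}\ge \beta_{j}e_{\pmid}$; taking minima gives the inequality. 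When $[E:F]$ is odd, $e_{\pmid}$ is odd too, so Lemma \ref{lem:odd-degree-invariants}(ii) promotes the $\alpha$/$\beta$ estimates to equalities, and Lemma \ref{lem:odd-degree-invariants}(i) together with Lemma \ref{lem:defectP-p}(ii) promotes the defect estimate to an equality, the former covering precisely the residual case $d(ca_{1,i}b_{1,j})=2e_{\mathfrak{p}}$ that Lemma \ref{lem:defectP-p}(ii) alone cannot handle.

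For (ii), I would inspect \eqref{A-i-M-N} candidate by candidate. Each entry of the minimum defining $A_{i}$, as well as the boundary expression used when $n\le m-2$ and $i=n+1$, decomposes as an $R$-linear part (for instance $P_{i,1}/2+e_{\mathfrak{p}}$ or $P_{i-1,3}+P_{i,1}$) together with at most one $d[\cdot]$-summand. Proposition \ref{prop:R-extension}(i) gives $\tilde{P}_{i,k}=P_{i,k}e_{\pmid}$, and $e_{\mathfrak{P}}=e_{\mathfrak{p}}e_{\pmid}$ by definition, so the $R$-linear part scales \emph{exactly} by $e_{\pmid}$. The $d[\cdot]$-summand scales by at least $e_{\pmid}$, with equality in the odd-degree case, by part (i). Combining these termwise bounds and taking the minimum yields $\tilde{A}_{i}\ge A_{i}e_{\pmid}$, with equality whenever $[E:F]$ is odd.

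No genuine obstacle is expected: the proof should reduce to substitution followed by the monotonicity of $\min$ under termwise inequality, which is likely why the authors call it "quite straightforward". The one subtlety worth flagging is the defect entry in the odd-degree regime: without Lemma \ref{lem:odd-degree-invariants}(i), the bound $\tilde{d}(c)\ge d(c)e_{\pmid}$ could be strict precisely when $d(c)=2e_{\mathfrak{p}}$, and that lemma is exactly what neutralizes this borderline possibility so that the equality in both (i) and (ii) goes through uniformly.
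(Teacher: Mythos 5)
Your argument is correct and is exactly the termwise-scaling-plus-minimum argument the paper has in mind when it declares these propositions ``quite straightforward'' from Lemma \ref{lem:odd-degree-invariants} (the paper omits the details entirely). You also correctly identify the one genuine subtlety --- the borderline case $d(\cdot)=2e_{\mathfrak{p}}$ where Lemma \ref{lem:defectP-p}(ii) alone gives only an inequality --- and resolve it with Lemma \ref{lem:odd-degree-invariants}(i), which is precisely the role that lemma plays here.
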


\begin{prop}\label{prop:beliconditions-iso}
	Suppose that $[E:F]$ is odd and $m=n$.
	\begin{enumerate}[itemindent=-0.5em,label=\rm (\roman*)]
		\item For $ 1\le i\le n $, $ \widetilde{R}_{i}=\widetilde{S}_{i} $ if and only if $ R_{i}= S_{i} $.
		
		\item  For $ 1\le i\le n-1 $,  $ \widetilde{\alpha}_{i}=\widetilde{\beta}_{i} $  if and only if $  \alpha_{i}=\beta_{i} $.
		
		\item For $1\le i\le n-1$, $\widetilde{d}(a_{1,i}b_{1,i})\ge \widetilde{\alpha}_{i}$  if and only if $d(a_{1,i}b_{1,i})\ge \alpha_{i}$.
		
		\item For $1<i<n$, $\widetilde{\alpha}_{i-1}+\widetilde{\alpha}_{i}>2e_{\mathfrak{P}}$  if and only if $\alpha_{i-1}+\alpha_{i}>2e_{\mathfrak{p}}$.	
	\end{enumerate} 
\end{prop}
\begin{prop}\label{prop:beliconditions-rep}
 Suppose that $[E:F]$ is odd and $m\ge n$.
		\begin{enumerate}[itemindent=-0.5em,label=\rm (\roman*)]
		\item For $ 1\le i\le n $, $ \widetilde{R}_{i}\le \widetilde{S}_{i} $  if and only if $ R_{i}\le S_{i} $; for $ 1<i<m $, $ \widetilde{R}_{i}+\widetilde{R}_{i+1}\le \widetilde{S}_{i-1}+\widetilde{S}_{i} $  if and only if $ R_{i}+R_{i+1}\le S_{i-1}+S_{i} $.
		
		\item  For $ 1\le i\le \min\{m-1,n\} $,  $ \widetilde{d}[a_{1,i}b_{1,i}]\ge \widetilde{A}_{i} $  if and only if $ d[a_{1,i}b_{1,i}]\ge A_{i} $.

		\item For $ 1<i\le \min\{m-1,n+1\} $,  $\widetilde{R}_{i+1}>\widetilde{S}_{i-1}$  if and only if $R_{i+1}>S_{i-1}$; $\widetilde{A}_{i-1}+\widetilde{A}_{i}>2e_{\mathfrak{P}}+\widetilde{R}_{i}-\widetilde{S}_{i}$  if and only if $A_{i-1}+A_{i}>2e_{\mathfrak{p}}+R_{i}-S_{i}$.
		
		\item For $ 1<i\le \min\{m-2,n+1\} $, $ \widetilde{S}_{i}\ge \widetilde{R}_{i+2}>\widetilde{S}_{i-1}+2e_{\mathfrak{P}}\ge \widetilde{R}_{i+1}+2e_{\mathfrak{P}}$  if and only if $ S_{i}\ge R_{i+2}>S_{i-1}+2e_{\mathfrak{p}}\ge R_{i+1}+2e_{\mathfrak{p}}$.
		\end{enumerate} 
\end{prop}
With Propositions \ref{prop:beliconditions-iso} and \ref{prop:beliconditions-rep}, the following result is a direct application of Springer Theorem and Beli’s theorems  (\cite[Theorem 3.1]{beli_Anew_2010} and Theorem \ref{thm:beligeneral}).

\begin{thm}\label{thm:springer-thm-dyadic}
	Suppose that $[E:F]$ is odd. 
	\begin{enumerate}[itemindent=-0.5em,label=\rm (\roman*)]
		\item   If $\widetilde{M}\cong \widetilde{N}$, then $M\cong N$.
		
		\item If $\widetilde{M}$ represents $\widetilde{N}$, then $M$ represents $N$.
	\end{enumerate} 
 
\end{thm}
\begin{proof}

	 (i) This is clear from \cite[Theorem 3.1]{beli_Anew_2010}, Proposition \ref{prop:beliconditions-iso} and Springer Theorem.
	 
	 (ii) This is clear from Theorem \ref{thm:beligeneral}, Proposition \ref{prop:beliconditions-rep} and Springer Theorem.
\end{proof}
	\begin{re}\label{re:springer-thm-non-dyadic}
		Theorem \ref{thm:springer-thm-dyadic} is also true for non-dyadic local fields from \cite[Theorem 1]{omeara_integral_1958} and Springer Theorem (cf. \cite[Theorem 5.2]{xu_springer_1999}).
	\end{re}

		\section{Norm principles for spinor norms} \label{sec:norm-principle}
		
Throughout this section, we assume that $F$ is a dyadic local field and $E/F$ is a finite extension of local fields at the primes $\mathfrak{P}\mid \mathfrak{p}$, and let $a=\pi^{R}\varepsilon $, with $R\in \mathbb{Z}$ and $\varepsilon\in \mathcal{O}_{F}^{\times}$. 	Put $S= e_{\mathfrak{p}} -R/2 $. Let $\Pi$ be a uniformizer of $E$ and $\pi=\lambda\Pi^{e_{\pmid}}$ for some $\lambda\in \mathcal{O}_{E}^{\times}$. Then, in $E^{\times}$, we have $a=\pi^{R}\varepsilon=\Pi^{Re_{\pmid}}\lambda^{R}\varepsilon$ and denote by $\widetilde{a}\coloneqq\Pi^{Re_{\pmid}}\lambda^{R}\varepsilon$.  Then, by Proposition \ref{prop:R-extension}(i), $\widetilde{R}=\ord_{\mathfrak{P}}(a)=Re_{\pmid}$ and $\widetilde{S}=Se_{\pmid}=e_{\mathfrak{P}}-\widetilde{R}/2$.

	By convention, we write $\widetilde{\phi}\colon\mathscr{S}_{E}\to \mathscr{A}_{E}$, $\widetilde{N}(\tilde{a})\coloneqq N(E(\sqrt{\tilde{a}})/E) $,  $\widetilde{G}\colon E^{\times}/\mathcal{O}_{E}^{\times 2}\to \sgp(E^{\times}/E^{\times 2})$ and $\widetilde{g}\colon\mathscr{A}_{E}\to \sgp(\mathcal{O}_{E}^{\times}/\mathcal{O}_{E}^{\times 2})$ for the lifting over $E$ of the map  $\phi$ (cf. \eqref{phi-a}),  the subgroup $N(a)$ of $F^{\times}/F^{\times 2}$ (cf. \eqref{Na}) and the functions $G,g$ introduced in Section \ref{sec:BONGs}, respectively. We also  abuse $\theta$ for the spinor norm $\theta_{\mathfrak{p}}$ in ground fields and $\theta_{\mathfrak{P}}$ in extension fields. 
	
	In this section, we mainly prove the following two theorems.

\begin{thm}\label{thm:norm-principle-M}
	Let $M$ be an $\mathcal{O}_{F}$-lattice. Then $N_{E/F}(\theta(\widetilde{M}))\subseteq \theta(M)$.
\end{thm}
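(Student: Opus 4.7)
By Lemma \ref{lem:extension-BONG}, $M$ and $\tilde M$ share the good BONG $\prec a_1,\ldots,a_m\succ$, so all relevant invariants on the $E$-side are comparable with those on the $F$-side through Propositions \ref{prop:R-extension}, \ref{prop:alpha-extension}, \ref{prop:G-g} and Lemma \ref{lem:defectP-p}. The inclusion is trivial when $\theta(M)=F^{\times}$; by Remark \ref{re:property-A-B}, in all remaining cases $\theta(M)\subseteq \mathcal{O}_F^{\times}F^{\times 2}$, and Theorem \ref{thm:integralspinornorm} forces (i) $G_F(a_{i+1}/a_i)\subseteq \mathcal{O}_F^{\times}F^{\times 2}$ for each $i$ and (ii) the parity congruence whenever $R_i=R_{i+2}$.

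I first establish $\theta(\tilde M)\subseteq \mathcal{O}_E^{\times}E^{\times 2}$, which already suffices in the non-property-A case $\theta(M)=\mathcal{O}_F^{\times}F^{\times 2}$ because $N_{\pmid}(\mathcal{O}_E^{\times}E^{\times 2})\subseteq \mathcal{O}_F^{\times}F^{\times 2}$. Applying Theorem \ref{thm:integralspinornorm} to $\tilde M$: condition (ii) transfers because $\tilde R_i=e_{\pmid}R_i$ (Proposition \ref{prop:R-extension}(i)) and $e_{\mathfrak{P}}=e_{\pmid}e_{\mathfrak{p}}$ scale both sides of the congruence by the same factor $e_{\pmid}$; condition (i), namely $G_E(a_{i+1}/a_i)\subseteq \mathcal{O}_E^{\times}E^{\times 2}$, is verified by walking through the four cases of Proposition \ref{prop:G-g} applied over $E$, using the extension behaviour of $R_i$, $\alpha_i$ and $d(-a_ia_{i+1})$ encoded in Propositions \ref{prop:R-extension}, \ref{prop:alpha-extension} and Lemma \ref{lem:defectP-p}.

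When $M$ has properties A and B, Theorem \ref{thm:integralspinornorm-A} writes
\[
\theta(M)=\prod_{i=1}^{m-1}G_F(a_{i+1}/a_i)\,(1+\mathfrak{p}^{\gamma})F^{\times 2},\quad \theta(\tilde M)=\prod_{i=1}^{m-1}G_E(a_{i+1}/a_i)\,(1+\mathfrak{P}^{\tilde\gamma})E^{\times 2},
\]
and I prove the inclusion factor by factor. For each $G$-piece, $N_{\pmid}(G_E(a_{i+1}/a_i))\subseteq G_F(a_{i+1}/a_i)$ reduces, via the four cases of Proposition \ref{prop:G-g}, either to the projection formula $N_{\pmid}(N_E(-a))\subseteq N_F(-a)$, to $N_{\pmid}(\langle a\rangle E^{\times 2})\subseteq \langle a\rangle F^{\times 2}$, or, using Lemma \ref{lem:ga-formula}, to
\[
N_{\pmid}\bigl((1+\mathfrak{P}^{\tilde\alpha})\mathcal{O}_E^{\times 2}\cap N_E(-a)\bigr)\subseteq (1+\mathfrak{p}^{\alpha})\mathcal{O}_F^{\times 2}\cap N_F(-a);
\]
the last inclusion follows from $\tilde\alpha\ge e_{\pmid}\alpha$ (Proposition \ref{prop:alpha-extension}(iv)) combined with the classical norm estimate $N_{E/F}(1+\mathfrak{P}^{k})\subseteq 1+\mathfrak{p}^{\lceil k/e_{\pmid}\rceil}$ on principal units. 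For the cyclotomic piece, $\tilde R_i=e_{\pmid}R_i$ gives $\tilde\gamma\ge e_{\pmid}\gamma$, so the same norm estimate yields $N_{\pmid}(1+\mathfrak{P}^{\tilde\gamma})\subseteq 1+\mathfrak{p}^{\gamma}$.

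The main obstacle is the case analysis for the $G$-pieces: one has to match Proposition \ref{prop:G-g} over $F$ with its analogue over $E$, particularly in cases (ii) and (iv), where $G_F$ is written in terms of the auxiliary values $g(b)$ or $g(c)$ produced by Proposition \ref{prop:b-c-alpha}. One must verify that the $E$-analogues $g_E(\tilde b)$, $g_E(\tilde c)$ are built from compatible data, and control the exceptional situation $d(-a_ia_{i+1})=2e_{\mathfrak{p}}$, where $\tilde d$ need not equal $e_{\pmid}\cdot d$ (Lemma \ref{lem:defectP-p}), by invoking Proposition \ref{prop:T}(iii) to keep $\tilde\alpha$ under control.
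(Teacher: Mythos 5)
Your proposal is correct and follows the same architecture as the paper's proof: split according to whether $M$ has property A; in the non-property-A case reduce to showing $\theta(\tilde M)\subseteq\mathcal{O}_E^{\times}E^{\times 2}$ by transferring the two conditions of Theorem \ref{thm:integralspinornorm}; in the property-A case apply Theorem \ref{thm:integralspinornorm-A} to both $M$ and $\tilde M$ and prove the inclusion factor by factor, reducing each $G$-piece through the four cases of Proposition \ref{prop:G-g} to the norm principles for $\langle a\rangle$, $N(-a)$ and $g(\cdot)$. Two implementation points differ from the paper. First, for the principal-unit factors you invoke the estimate $N_{E/F}(1+\mathfrak{P}^{k})\subseteq 1+\mathfrak{p}^{\lceil k/e_{\pmid}\rceil}$, whereas the paper's Lemma \ref{lem:norm-principle-hk} obtains only the containment modulo squares, by pairing against $(1+\mathfrak{p}^{h^{\#}})F^{\times 2}$ under the Hilbert symbol and applying Bender's lifting formula; your route is more elementary and gives a stronger conclusion, provided you justify the estimate (it follows from the valuations of the elementary symmetric functions of the conjugates of $x\in\mathfrak{P}^{k}$), and it is all that is needed since $\tilde\alpha\ge e_{\pmid}\alpha$ and $\tilde\gamma\ge e_{\pmid}\gamma$ are available. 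Second, in the non-property-A case the paper transfers $G(a)\subseteq\mathcal{O}_F^{\times}F^{\times 2}$ to $E$ via Beli's explicit criterion (Lemma \ref{lem:Ga-subseteq-O_F2}) rather than by re-running the case analysis of Proposition \ref{prop:G-g} over $E$; both work, the former is shorter. One phrasing slip: the claim that ``in all remaining cases $\theta(M)\subseteq\mathcal{O}_F^{\times}F^{\times 2}$'' is false as literally stated, since a lattice with properties A and B can have $\theta(M)$ containing classes of odd order without equalling $F^{\times}$; your subsequent case division, which treats property A separately via the product formula, shows this is not a gap in the logic, but the sentence should be restricted to the non-property-A case.
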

\begin{thm}\label{thm:norm-principle-M-N}
	Let $M,N$ be $\mathcal{O}_{F}$-lattices and $N\subseteq M$. Then $N_{E/F}(\theta(\widetilde{M}/\widetilde{N}))\subseteq \theta(M/N)$.
\end{thm}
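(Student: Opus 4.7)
The plan is to induct on the rank $m=\rank M$, applying the reduction formulas of Lemma \ref{lem:reduction-formula} at each step. These formulas either split $\theta(M/N)$ as a product $\theta(M^{*}/N^{*})\theta(M)$ with $\rank M^{*}=m-1$, or reduce it to $\theta(M'/N)$ at the same rank, or collapse it to $F^{\times}$; in the last situation the desired inclusion is automatic. Since $N_{\pmid}$ is multiplicative and Theorem \ref{thm:norm-principle-M} already handles the $\theta(M)$ factor, the product case will then fall out from the inductive hypothesis.

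The preliminary check is that all the conditions governing which case of Lemma \ref{lem:reduction-formula} applies are stable under base change from $F$ to $E$. By Proposition \ref{prop:R-extension}(i) we have $\tilde{R}_{i}=R_{i}e_{\pmid}$ and $\tilde{S}_{i}=S_{i}e_{\pmid}$, and $2e_{\mathfrak{P}}=2e_{\mathfrak{p}}e_{\pmid}$, so the conditions $R_{1}>S_{1}$, $R_{1}=S_{1}$, $R_{1}<R_{3}$, $R_{2}-R_{1}=2e_{\mathfrak{p}}$, and $R_{2}=S_{2}$ are each equivalent to their tilde versions. I would further verify that the BONG-level operations commute with base change: Lemma \ref{lem:extension-BONG} together with the explicit BONG formula in Lemma \ref{lem:reduction-formula}(ii) yields $(\tilde{M})^{*}=\widetilde{M^{*}}$ and $(\tilde{N})^{*}=\widetilde{N^{*}}$, while the condition that $M'$ is an $\mathcal{O}_{F}$-lattice can be read off the $R_{i}$'s and is therefore preserved under lifting, together with the identity $(\tilde{M})'=\widetilde{M'}$ whenever both sides make sense.

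The inductive step then proceeds case by case. If $R_{1}=S_{1}$ and the reduction of Lemma \ref{lem:reduction-formula}(ii) applies, the inductive hypothesis on $M^{*}/N^{*}$ gives $N_{\pmid}(\theta(\widetilde{M^{*}}/\widetilde{N^{*}}))\subseteq \theta(M^{*}/N^{*})$, Theorem \ref{thm:norm-principle-M} gives $N_{\pmid}(\theta(\tilde{M}))\subseteq \theta(M)$, and multiplicativity of $N_{\pmid}$ finishes this branch. If $R_{1}>S_{1}$ with $M'$ a lattice, we pass to the pair $(M',N)$; since $\rank M'=m$ but $R_{1}(M')>R_{1}(M)$, one needs a secondary induction on an auxiliary parameter such as $R_{m}-R_{1}$, with the good-BONG inequalities of Lemma \ref{lem:2.2} forcing termination. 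In all remaining subcases $\theta(M/N)=F^{\times}$ and the inclusion holds trivially.

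The main obstacle is precisely the handling of case (i), where the rank is preserved: one must prove termination of the iteration $M\mapsto M'\mapsto M''\mapsto \cdots$, and simultaneously confirm that the analogous iteration on the $E$-side stays in step, so that the subcases dividing on ``$M'$ is a lattice'' match on both sides. Once these BONG-level compatibilities are in hand, the theorem is a routine combination of reduction, induction, and Theorem \ref{thm:norm-principle-M}.
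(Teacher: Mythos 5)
Your proposal follows essentially the same route as the paper: a double induction driven by the reduction formulas of Lemma \ref{lem:reduction-formula}, with the base-change compatibilities $(\tilde{M})^{*}\cong \widetilde{M^{*}}$, $(\tilde{N})^{*}=\widetilde{N^{*}}$, $(\tilde{M})^{'}=\widetilde{M^{'}}$ (the paper's Lemma \ref{lem:M'-M*}) and Theorem \ref{thm:norm-principle-M} supplying the $\theta(M)$ factor exactly as you describe. The only difference is the secondary induction parameter in the $R_{1}>S_{1}$ branch: the paper uses $S_{1}-R_{1}$, which strictly decreases because $R_{1}(M^{'})>R_{1}$ while $S_{1}$ is fixed and bounds $R_{1}(M^{'})$ from above (since $N\subseteq M^{'}$) --- a cleaner choice than your tentative $R_{m}-R_{1}$, though your termination mechanism is the same.
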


		\begin{re}\label{re:norm-principle-non-dyadic}
		In the non-dyadic case, the proof of Theorem \ref{thm:norm-principle-M} follows from Kneser's computation for the spinor norm group, see \cite[Proposition 3.1]{earnest-hisa_spinorgeneraI-1977}; the proof of Theorem \ref{thm:norm-principle-M-N} is referred to \cite[Theorem 3.3]{xu_springer_1999}.
	      \end{re}
 \begin{prop}\label{prop:a=b-inOE}
	 	 Let $a,b\in E^{\times}$. Then $a\in b\mathcal{O}_{E}^{\times 2}$ if and only if 
	 	 	  $a\in b E^{\times 2}$ and $ \ord_{\mathfrak{P}}(a)=\ord_{\mathfrak{P}}(b)$.
	 \end{prop}
		\begin{proof}
			Necessity is clear. For sufficiency, let $a=bc^{2}$ for some $c\in E^{\times}$. Since $\ord_{\mathfrak{P}}(b)=\ord_{\mathfrak{P}}(a)=\ord_{\mathfrak{P}}(b)+2\ord_{\mathfrak{P}}(c)$, $\ord_{\mathfrak{P}}(c)=0$ and thus $c\in \mathcal{O}_{E}^{\times}$, so $a=bc^{2}\in b\mathcal{O}_{E}^{\times 2}$.
		\end{proof}
		 \begin{lem}\label{lem:phia-extension}
		 	Let $a\in \mathscr{A}_{F}$. 
		 		\begin{enumerate}[itemindent=-0.5em,label=\rm (\roman*)]
		 		
		 		\item $\widetilde{\alpha}(a)\ge \alpha(a)e_{\pmid}$.
		 		
		 		\item If $a\in\mathscr{S}_{F}$, then $ \widetilde{a}\in \mathscr{S}_{E}, \widetilde{\phi(a)},\widetilde{\phi}(\tilde{a})\in \mathscr{A}_{E}$ and $\widetilde{\phi}(\tilde{a})=\widetilde{\phi(a)}=\widetilde{a}$ in $E^{\times}/E^{\times 2}$. 
		 		
		 		\item Suppose $a\in \mathscr{S}_{F}$. If $R>2e_{\mathfrak{p}}$ or $a\in \mathscr{S}_{F}^{0}$, then $\widetilde{\phi}(\tilde{a})=\widetilde{\phi(a)}$ in $E^{\times}/\mathcal{O}_{E}^{\times 2}$.
		 	\end{enumerate}
		 \end{lem}
		  \begin{proof}
		  	(i) By Remark \ref{re:alpha-binary} and Proposition \ref{prop:alpha-extension}(iii), we have $\widetilde{\alpha}(a)=\alpha_{1}(\widetilde{L_{a}})\ge \alpha_{1}(L_{a})e_{\pmid}=\alpha(a)e_{\pmid}$ for some binary $\mathcal{O}_{F}$-lattice $L_{a}$.

		  	(ii) Since $a\in \mathscr{S}_{F}$, by (i) and \eqref{alpha-A-equiv}, $\widetilde{\alpha}(a)\ge \alpha(a)e_{\pmid}\ge 0$ and thus $\widetilde{a}\in \mathscr{A}_{E}$. Since $d(-a)>S$, by Lemma \ref{lem:defectP-p}(ii), $\widetilde{d}(-a)\ge d(-a)e_{\pmid}>Se_{\pmid}=\widetilde{S}$. So $\widetilde{a}\in \mathscr{S}_{E}$. Thus both $\widetilde{\phi(a)}$ and  $\widetilde{\phi}(\tilde{a})$ are defined. Combining with Proposition \ref{prop:phia-a}(i), we have $\widetilde{\phi(a)}\in \mathscr{A}_{E} $ and $\widetilde{\phi}(\tilde{a})\in \mathscr{A}_{E}$. This shows (ii) except for the equalities. 
		  	
		  	By Proposition \ref{prop:phia-a}(i),   $\widetilde{\phi}(\tilde{a})\in \widetilde{a}E^{\times 2}$, and $\phi(a)\in aF^{\times 2}$, which implies   $\widetilde{\phi(a)}\in \widetilde{a}E^{\times 2}$. So the equalities are proved.
		  	
		  	(iii)  By Proposition \ref{prop:a=b-inOE} and (ii), it suffices to show that $\ord_{\mathfrak{P}}(\widetilde{\phi}(\tilde{a}))=\ord_{\mathfrak{P}}(\widetilde{\phi(a)})$. If $R>2e_{\mathfrak{p}}$, then
		  	\begin{align*}
		  		\ord_{\mathfrak{P}}(\widetilde{\phi}(\tilde{a}))&=Re_{\pmid}-2e_{\mathfrak{P}}=(R-2e_{\mathfrak{p}})e_{\pmid}= \ord_{\mathfrak{P}}(\widetilde{\phi(a)});
		  	\end{align*}
		  	if $R\le 2e_{\mathfrak{p}}$ and $a\in \mathscr{S}_{F}^{0}$, since $S$ is even,
		  	\begin{align*}
		  		 \ord_{\mathfrak{P}}(\widetilde{\phi}(\tilde{a}))=-2\lfloor Se_{\pmid}/2\rfloor=-2\lfloor S/2\rfloor e_{\pmid}= \ord_{\mathfrak{P}}(\widetilde{\phi(a)}),
		  	\end{align*}
		  	as desired. 
		  \end{proof}
 \begin{cor}\label{cor:alpha-N-g-tilde=}
		Let $a\in \mathscr{S}_{F}$.
		 
		\begin{enumerate}[itemindent=-0.5em,label=\rm (\roman*)]
		 	
		 	\item  We have $\widetilde{d}(-\widetilde{\phi}(\tilde{a}))=\widetilde{d}(-\widetilde{\phi(a)}))=\widetilde{d}(-\tilde{a})$ and $\widetilde{N}(-\widetilde{\phi}(\tilde{a}))=\widetilde{N}(-\widetilde{\phi(a)})=\widetilde{N}(-\tilde{a})$.
		 	
		 	\item If $R>2e_{\mathfrak{p}}$ or  $a\in \mathscr{S}_{F}^{0}$, then  $\widetilde{\alpha}(\widetilde{\phi}(\tilde{a}))=\widetilde{\alpha}(\widetilde{\phi(a)})$ and $\widetilde{g}(\widetilde{\phi}(\tilde{a}))=\widetilde{g}(\widetilde{\phi(a)})$.
		 	 \end{enumerate}
		  \end{cor}
\begin{re}\label{re:alpha-N-g-tilde=}
 			In our convention in General Settings, the above equalities can be rewritten as
				\begin{align*}
					&\widetilde{d}(-\widetilde{\phi}(a))=\widetilde{d}(-\phi(a)))=\widetilde{d}(-a),\widetilde{N}(-\widetilde{\phi}(a))=\widetilde{N}(-\phi(a))=\widetilde{N}(-a),\\
					&\widetilde{\alpha}(\widetilde{\phi}(a))=\widetilde{\alpha}( \phi(a)) \quad\text{and}\quad\widetilde{g}(\widetilde{\phi}(a))=\widetilde{g}(\phi(a)),
				\end{align*}
			by dropping out the tilde symbols above the elements $a$ and $\phi(a)$.
		 \end{re}
\begin{proof}
		 	(i) Since $\widetilde{d}$ is defined on $E^{\times}/E^{\times 2}$ and $\widetilde{N}(\cdot)$ is unchanged under scaling a square of $E^{\times}$, the first two equalities follows from Lemma \ref{lem:phia-extension}(ii). 
		 	
		 	(ii) Since $\widetilde{\alpha}$ is defined on $\mathscr{A}_{E}\subseteq E^{\times}/\mathcal{O}_{E}^{\times 2}$, the equality $\widetilde{\alpha}(\widetilde{\phi}(\tilde{a}))=\widetilde{\alpha}(\widetilde{\phi(a)})$ follows from Lemma \ref{lem:phia-extension}(iii). So, by Lemma \ref{lem:ga-formula}, $\widetilde{g}(\widetilde{\phi}(\tilde{a}))=\widetilde{g}(\widetilde{\phi(a)}) $.
		 \end{proof}
 If $a\in \mathscr{S}_{F}$, from Lemma \ref{lem:phia-extension}(ii), $\widetilde{\phi(a)}$ and $ \widetilde{\phi}(\tilde{a})$ are defined, so $\widetilde{\alpha}(\widetilde{\phi(a)})$ and $ \widetilde{\alpha}(\widetilde{\phi}(\tilde{a}))$ are also defined. However, they are not equal in general when  $a\in \mathscr{S}_{F}^{1}$, because $\widetilde{\phi}(\tilde{a})=\widetilde{\phi(a)}$ may fail in $E^{\times}/\mathcal{O}_{E}^{\times 2}$. Thus, we also need the following proposition to treat this case.

\begin{prop}\label{prop:phia-Sodd}
		 		Let $a\in \mathscr{S}_{F} $, $\alpha=\alpha(\phi(a))$ and $\widetilde{\alpha}=\widetilde{\alpha}(\widetilde{\phi}(\tilde{a}))$. Assume that $a\in \mathscr{S}_{F}^{1}$. Then $\alpha,\widetilde{\alpha}\in \mathbb{Z}$ and $\widetilde{\alpha}>(\alpha-1)e_{\pmid} $.
		 \end{prop}
		 \begin{proof}
First, note that $S$ is odd. From  Proposition \ref{prop:phia-a}(iii),  
		  \begin{align*}
		  	 \alpha&=\min\{e_{\mathfrak{p}}-S/2+1/2,d(-a)-S+1\}\in \mathbb{Z},\\
		  \intertext{and then}
		 		(\alpha-1)e_{\pmid}&=\min\{e_{\mathfrak{p}}-S/2-1/2,d(-a)-S \}e_{\pmid} \\
		 		&=\min\{e_{\mathfrak{P}}-\widetilde{S}/2-e_{\pmid}/2,d(-a)e_{\pmid}-\widetilde{S}\}.
		 \end{align*}  
		 	On the other hand, $\widetilde{d}(-\widetilde{\phi}(\tilde{a}))=\widetilde{d}(-a)$ and $\ord_{\mathfrak{P}}(\widetilde{\phi}(\tilde{a}))=-2\lfloor  \widetilde{S}/2\rfloor$. Hence $\widetilde{\alpha}=\min\{e_{\mathfrak{P}}-\lfloor  \widetilde{S}/2\rfloor,	\widetilde{d}(-a)-2\lfloor \widetilde{S}/2\rfloor\}\in \mathbb{Z}$.
		 	
		 	If $\widetilde{\alpha}=e_{\mathfrak{P}}-\lfloor \widetilde{S}/2\rfloor$, then $
		 		 \widetilde{\alpha}\ge e_{\mathfrak{P}}-\widetilde{S}/2>e_{\mathfrak{P}}-\widetilde{S}/2-e_{\pmid}/2\ge  (\alpha-1)e_{\pmid}$,
		 	as desired.
		 	
		 	We may assume that $\widetilde{\alpha}=\widetilde{d}(-a)-2\lfloor \widetilde{S}/2\rfloor< e_{\mathfrak{P}}-\lfloor \widetilde{S}/2\rfloor$. By Proposition \ref{prop:phia-a}(iii), $S\le 2e_{\mathfrak{p}}$ and thus $\widetilde{d}(-a)< e_{\mathfrak{P}}+\lfloor \widetilde{S}/2\rfloor \le e_{\mathfrak{P}}+\lfloor 2e_{\mathfrak{P}}/2\rfloor=2e_{\mathfrak{P}}$. Hence, by Lemma \ref{lem:defectP-p}(ii), $d(-a)<2e_{\mathfrak{p}}$ and it follows that
		 	\begin{align*}
		 			\widetilde{d}(-a)\ge d(-a)e_{\pmid},
		 	\end{align*}
		 	 and the equality holds if and only if $e_{\pmid}$ is odd. Recall that $S$ is odd. So 
		 	 \begin{align*}
		 	 	-2\lfloor \widetilde{S}/2\rfloor \ge -\widetilde{S},
		 	 \end{align*}
		 	  and the equaltiy holds if and only if $\widetilde{S}$ is even, i.e, $e_{\pmid}$ is even. Since $e_{\pmid}$ cannot be odd and even simultaneously, one of these two inequalities must be strict. Hence, when adding them, we get the strict inequality
		 	\begin{align*}
		 		 \widetilde{\alpha}=\widetilde{d}(-a)-2\lfloor \widetilde{S}/2 \rfloor>d(-a)e_{\pmid}-\widetilde{S}\ge (\alpha-1)e_{\pmid}.
		 	\end{align*}  
		 \end{proof}
	 
		 \begin{lem}\label{lem:norm-principle-hk}
			Let $h,k\in \mathbb{Z}$. If $k>(h-1)e_{\pmid} $, then $N_{E/F}((1+\mathfrak{P}^{k})E^{\times 2})\subseteq (1+\mathfrak{p}^{h})F^{\times 2}$. 
			
			In particular, if $k\ge he_{\pmid}$, then $N_{E/F}((1+\mathfrak{P}^{k})E^{\times 2})\subseteq (1+\mathfrak{p}^{h})F^{\times 2}$.
 		\end{lem}
		\begin{proof}
			If $h\le 0$, by Proposition \ref{prop:1+p}(iii), $(1+\mathfrak{p}^{h})F^{\times 2}=F^{\times}$, so the inclusion is trivial.
			
			If $h\ge 2e_{\mathfrak{p}}+1$, then 
			\begin{align*}
				k\ge (h-1)e_{\pmid}+1\ge 2e_{\mathfrak{p}}e_{\pmid}+1=2e_{\mathfrak{P}}+1.
			\end{align*}
			 Hence $ (1+\mathfrak{p}^{h})F^{\times 2}=F^{\times 2}$ and $(1+\mathfrak{P}^{k})E^{\times 2}=E^{\times 2}$. Since $N_{E/F}(E^{\times 2})\subseteq F^{\times 2}$, we are done.  
			 
			 	Suppose that $1\le h \le 2e_{\mathfrak{p}}$. By Proposition \ref{prop:hsharp}, we have $h+h^{\#}\ge 2e_{\mathfrak{p}}+1$ and  $((1+\mathfrak{p}^{h^{\#}})F^{\times 2})^{\perp}=(1+\mathfrak{p}^{h})F^{\times 2}$.
			 
			 Let $c\in (1+\mathfrak{P}^{k})E^{\times 2}$. For any $b\in (1+\mathfrak{p}^{h^{\#}})F^{\times 2}$, by Lemma \ref{lem:defectP-p}(ii), $\widetilde{d}(b)\ge d(b)e_{\pmid}\ge h^{\#}e_{\pmid}$. So
			 \begin{align*}
			 	\widetilde{d}(c)+\widetilde{d}(b)&\ge   k +h^{\#}e_{\pmid}\ge  (h-1)e_{\pmid}+1+h^{\#}e_{\pmid} \\
			 	&=(h+h^{\#})e_{\pmid}-e_{\pmid}+1\ge (2e_{\mathfrak{p}}+1)e_{\pmid}-e_{\pmid}+1=2e_{\mathfrak{P}}+1.
			 \end{align*}
			 Hence, by \cite[Corollary]{bender_lifting_1973}, we deduce that
			 \begin{align*}
			 	(N_{E/F}(c),b)_{\mathfrak{p}}=(c,b)_{\mathfrak{P}}=1.
			 \end{align*}
			 Hence $N_{E/F}(c)\in N(b)$.
			 So, by the arbitrariness of $b$ and \cite[Lemma 1.2(ii)]{beli_integral_2003}, $N_{E/F}(c)\in ((1+\mathfrak{p}^{h^{\#}})F^{\times 2})^{\perp}=(1+\mathfrak{p}^{h})F^{\times 2}$, as required. 
		\end{proof}

		 \begin{cor}\label{cor:norm-principle-hk-R}
				Let $h,k\in \mathbb{R}$. If $k\ge he_{\pmid} $, then $N_{E/F}((1+\mathfrak{P}^{k})E^{\times 2})\subseteq (1+\mathfrak{p}^{h})F^{\times 2}$. 				
		\end{cor}
		 \begin{proof}
			From Proposition \ref{prop:1+p}(iii), we may let $h>0$. First, by Proposition \ref{prop:1+p}(iv), we have
			\begin{align*}
				(1+\mathfrak{p}^{h})F^{\times 2}=(1+\mathfrak{p}^{\lceil h\rceil})F^{\times 2}\quad\text{and}\quad (1+\mathfrak{P}^{k})E^{\times 2}=(1+\mathfrak{P}^{\lceil k\rceil})E^{\times 2}.
			\end{align*}
			Since $h>\lceil h\rceil-1$, we have $\lceil k\rceil \ge k\ge he_{\pmid}>(\lceil h\rceil-1)e_{\pmid}$. So the assertion follows by Lemma \ref{lem:norm-principle-hk}.
		\end{proof}
  \begin{lem}\label{lem:norm-principle-pieces}
			Let $a\in F^{\times}$.			
			\begin{enumerate}[itemindent=-0.5em,label=\rm (\roman*)]
			 \item  $N_{E/F}( \langle \tilde{a}\rangle E^{\times 2})\subseteq \langle a\rangle F^{\times 2}$.
			 
			  \item $N_{E/F}(\widetilde{N}(a))\subseteq N(a)$.
			 
			 \item If $a\in \mathscr{A}_{F}$, then $N_{E/F}((1+\mathfrak{P}^{\widetilde{\alpha}(a)})E^{\times 2})\subseteq (1+\mathfrak{p}^{\alpha(a)})F^{\times 2}$.
			 
			 \item  If $a\in\mathscr{A}_{F}$, then $N_{E/F}(\widetilde{g}(a)E^{\times 2})\subseteq g(a)F^{\times 2}$.
			\end{enumerate} 		
		\end{lem}
		\begin{proof}
			(i)  We have $N_{E/F}(\tilde{a})=a^{[E:F]}\in \langle a\rangle$ and $N_{E/F}(E^{\times 2})\subseteq F^{\times 2}$, as desired.
			
			(ii) Let $c\in \widetilde{N}(a)$. By \cite[Corollary]{bender_lifting_1973}, $(N_{E/F}(c),a)_{\mathfrak{p}}=(c,a)_{\mathfrak{P}}=1$, so $N_{E/F}(c)\in N(a)$.
			
			(iii) By Lemma \ref{lem:phia-extension}(i), we have $\widetilde{\alpha}(a)\ge\alpha(a)e_{\pmid}$, where $\alpha(a),\widetilde{\alpha}(a)\in \mathbb{Q}$, from Proposition \ref{prop:alphaproperty}(i). Hence, by Corollary \ref{cor:norm-principle-hk-R}, we see that
			\begin{align*}
				N_{E/F}((1+\mathfrak{P}^{\widetilde{\alpha}(a)})E^{\times 2})\subseteq   (1+\mathfrak{p}^{\alpha(a)})F^{\times 2}.
			\end{align*} 
	 
			(iv) For sets $S_{1}, S_{2}, S_{3}, S_{4}$, if $S_{1}\subseteq S_{3}$ and $S_{2}\subseteq S_{4}$, then $S_{1}\cap S_{2}\subseteq S_{3}\cap S_{4}$. So the assertion follows from  \cite[63:16]{omeara_quadratic_1963}, (ii), (iii) and Lemma \ref{lem:ga-formula-F2}.
\end{proof} 
\begin{lem}\label{lem:phia-norm-Sodd}
	Let $a\in \mathscr{S}_{F} $, $\alpha=\alpha(\phi(a))$ and $\widetilde{\alpha}=\widetilde{\alpha}(\widetilde{\phi}(\tilde{a}))$. Assume that $a\in \mathscr{S}_{F}^{1}$. Then $N_{E/F}((1+\mathfrak{P}^{\widetilde{\alpha}})E^{\times 2})\subseteq (1+\mathfrak{p}^{\alpha})F^{\times 2}$.
\end{lem}
\begin{proof}
	By Proposition \ref{prop:phia-Sodd}(iii), we have $\widetilde{\alpha}> (\alpha-1)e_{\pmid}$. Take $h=\alpha$ and $k=\widetilde{\alpha}$ in Lemma \ref{lem:norm-principle-hk}, as required.
\end{proof}
\begin{lem}\label{lem:norm-principle-ga}
	Let $a\in \mathscr{S}_{F} $. Then $N_{E/F}(\widetilde{g}(\widetilde{\phi}(a))E^{\times 2})\subseteq g(\phi(a))F^{\times 2}$.
\end{lem}
\begin{proof}
	If $R>2e_{\mathfrak{p}} $ or $a\in \mathscr{S}_{F}^{0}$, then, by Corollary \ref{cor:alpha-N-g-tilde=}(ii) and Remark \ref{re:alpha-N-g-tilde=}, $\widetilde{g}(\widetilde{\phi}(a))=\widetilde{g}(\phi(a))$. By Proposition \ref{prop:phia-a}(i), $\phi(a)\in \mathscr{A}_{F}$, so we are done by Lemma \ref{lem:norm-principle-pieces}(iv).
	
	If  $a\in \mathscr{S}_{F}^{1}$, then $\widetilde{g}(\widetilde{\phi}(a))=\widetilde{g}(\widetilde{\phi}(\tilde{a}))$ (from our convention), and by Corollary \ref{cor:alpha-N-g-tilde=}(i), $N(-\phi(a))=N(-a)$ and $\widetilde{N}(-\widetilde{\phi}(\tilde{a}))=\widetilde{N}(-\phi(a))=\widetilde{N}(-a)$. Hence, by Lemma \ref{lem:ga-formula-F2},
	\begin{align*}
				g(\phi(a))F^{\times 2}&=\mathcal{O}_{F}^{\times}F^{\times 2}\cap(1+\mathfrak{p}^{\alpha})F^{\times 2}\cap N(-a), \\
		\widetilde{g}(\widetilde{\phi}(a))E^{\times 2} &=\mathcal{O}_{E}^{\times}E^{\times 2}\cap(1+\mathfrak{P}^{\widetilde{\alpha}})E^{\times 2}\cap \widetilde{N}(-a),
	\end{align*}
	where $\alpha=\alpha(\phi(a))$ and $\widetilde{\alpha}=\widetilde{\alpha}(\widetilde{\phi}(\tilde{a}))$. So we are done by \cite[63:16]{omeara_quadratic_1963}, Lemmas \ref{lem:phia-norm-Sodd} and   \ref{lem:norm-principle-pieces}(ii).
\end{proof}

		\begin{lem}\label{lem:norm-principle-Ga}
			Let $a\in F^{\times}$. Then $N_{E/F}(\widetilde{G}(a))\subseteq G(a)$. 
		\end{lem}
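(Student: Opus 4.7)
The plan is to prove $N_{\pmid}(\tilde G(a)) \subseteq G(a)$ by case analysis on the explicit formulas for $G(a)$ recorded in Proposition~\ref{prop:G-g} and Remark~\ref{re:G-g}, and to reduce each resulting case to an application of Lemma~\ref{lem:norm-principle-pieces}. Writing $R = \ord_{\mathfrak{p}}(a)$, the relations $\tilde R = R e_{\pmid}$ and $\tilde e = e_{\pmid}e_{\mathfrak{p}}$ show that the three range alternatives $R > 4 e_{\mathfrak{p}}$, $2 e_{\mathfrak{p}} < R \le 4 e_{\mathfrak{p}}$, and $R \le 2 e_{\mathfrak{p}}$ transport without change to their $E$-counterparts; thus the case of $\tilde G(a)$ predicted by Proposition~\ref{prop:G-g} over $E$ matches that of $G(a)$ over $F$, with the sole exception that an $F$-case (iii) may be promoted to an $E$-case (iv) when $\tilde d(-a)$ grows strictly faster than $d(-a)e_{\pmid}$.

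In each routine case, $\tilde G(a)$ decomposes as a product of primitive factors of the form $\langle a\rangle E^{\times 2}$, $\tilde N(-a)$, or $\tilde g(\cdot)E^{\times 2}$ for a distinguished element $b$ or $c$ from Proposition~\ref{prop:b-c-alpha}. Applying Lemma~\ref{lem:norm-principle-pieces} piece by piece, part~(i) handles the factor $\langle a\rangle$; part~(ii) handles the factor $\tilde N(-a)$ (and also takes care of the promotion of $F$-case (iii) to $E$-case (iv), using the universal containment $\tilde G(a) \subseteq \tilde N(-a)$ that follows from Lemma~\ref{lem:ga-formula} together with the observation that $\tilde b, \tilde c$ are in the same $E^{\times}/E^{\times 2}$-class as $a$); and part~(iv) handles the factor $\tilde g(b)$ because Proposition~\ref{prop:b-c-alpha}(i) guarantees $b \in \mathscr A_F$ with $1 \le \alpha(b) \le 2 e_{\mathfrak{p}}$, while the $E$-analog $\tilde b$ differs from $b$ only by a unit square in $\mathcal O_E^{\times 2}$, giving $\tilde g(\tilde b) = \tilde g(b)$. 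The case $a \equiv -1/4 \pmod{\mathcal O_F^{\times 2}}$ follows at once from $N_{\pmid}(\mathcal O_E^{\times}) \subseteq \mathcal O_F^{\times}$ and $N_{\pmid}(E^{\times 2}) \subseteq F^{\times 2}$.

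The main obstacle is case~(iv), where $G(a) = g(c) F^{\times 2}$ with $c = \pi^{-2\lfloor e_{\mathfrak{p}}/2 - R/4 \rfloor}\varepsilon \in \mathscr A_F$ and $\tilde G(a) = \tilde g(c') E^{\times 2}$ with $c' = \tilde\pi^{-2\lfloor \tilde e/2 - \tilde R/4 \rfloor}\tilde\varepsilon \in \mathscr A_E$. Although $c$ and $c'$ represent the same class in $F^{\times}/F^{\times 2}$, so that $\tilde N(-c') = \tilde N(-a)$, their $\mathfrak P$-orders need not be related by simple scaling by $e_{\pmid}$; in particular one can have $\tilde\alpha(c') < e_{\pmid}\alpha(c)$, so Lemma~\ref{lem:norm-principle-pieces}(iv) cannot be invoked with $c'$ directly. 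My plan here is to rewrite both sides in the uniform form
\begin{align*}
\tilde g(c') E^{\times 2} &= (1 + \mathfrak P^{\tilde\alpha(c')}) E^{\times 2} \cap \tilde N(-a), \\
g(c) F^{\times 2} &= (1 + \mathfrak p^{\alpha(c)}) F^{\times 2} \cap N(-a),
\end{align*}
derivable from Lemma~\ref{lem:ga-formula} after absorbing the unit-square factor, and then to combine the norm estimate from Lemma~\ref{lem:norm-principle-hk} with the Hilbert-symbol identity $(N_{\pmid}(u), -a)_{\mathfrak{p}} = (u, -a)_{\mathfrak{P}}$ of Bender, already used in Lemma~\ref{lem:norm-principle-pieces}(ii). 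The extra $\tilde N(-a)$-constraint should supply the arithmetic compensating for any shortfall in the $\mathfrak{P}$-defect exponent under norm, yielding the desired inclusion $N_{\pmid}(\tilde g(c')E^{\times 2}) \subseteq g(c) F^{\times 2}$.
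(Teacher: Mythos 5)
Your overall strategy is the paper's: run through the cases of Proposition~\ref{prop:G-g} and Remark~\ref{re:G-g} and reduce each to Lemma~\ref{lem:norm-principle-pieces}. The cases $R>4e_{\mathfrak{p}}$ and $2e_{\mathfrak{p}}<R\le 4e_{\mathfrak{p}}$, the case $a\in(-1/4)\mathcal{O}_F^{\times 2}$ or $a\notin\mathscr{A}_F$, and the ``promotion'' of the $F$-case (iii) to the $E$-case (iv) (via $\tilde G(a)=\tilde g(\cdot)\subseteq\tilde N(-a)$ from Lemma~\ref{lem:ga-formula}) are all handled essentially as in the paper, and correctly.

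The case you flag as the main obstacle --- $R\le 2e_{\mathfrak{p}}$ with $d(-a)>e_{\mathfrak{p}}-R/2$ --- is, however, not closed. Your final sentence, that the extra $\tilde N(-a)$-constraint ``should supply the arithmetic compensating for any shortfall,'' is a hope rather than an argument, and as sketched it does not work: if $\tilde\alpha(c')<e_{\pmid}\alpha(c)$, Lemma~\ref{lem:norm-principle-hk} only gives $N_{\pmid}(1+\mathfrak{P}^{\tilde\alpha(c')})\subseteq 1+\mathfrak{p}^{h}$ for $h\le\tilde\alpha(c')/e_{\pmid}<\alpha(c)$, and the duality step you invoke requires $(u,v)_{\mathfrak{P}}=1$ for all $v\in 1+\mathfrak{p}^{\alpha(c)^{\#}}$, i.e. $\tilde d(u)+e_{\pmid}\alpha(c)^{\#}>2e_{\mathfrak{P}}$; knowing $u\in\tilde N(-a)$ controls only the single pairing $(u,-a)_{\mathfrak{P}}$ and cannot make up the deficit against these other units. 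The paper sidesteps the problem entirely: it applies Proposition~\ref{prop:G-g}(iv) over $E$ with the \emph{same} element $c=\pi^{-2\lfloor e_{\mathfrak{p}}/2-R/4\rfloor}\varepsilon\in\mathscr{A}_F\subseteq\mathscr{A}_E$, so that $\tilde G(a)=\tilde g(c)E^{\times 2}$, and then quotes Lemma~\ref{lem:norm-principle-pieces}(iv) verbatim, since for a fixed $F$-rational $c$ one has $\tilde\alpha(c)\ge e_{\pmid}\alpha(c)$ by Proposition~\ref{prop:alpha-extension}(iv). So the correct use of your (legitimate) observation about normalizations is to justify that the $E$-side formula may be written with $c$ in place of $c'$ (this is what the cited Lemma~3.13(ii) of Beli delivers), not to push a possibly deficient exponent through the norm map. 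Two smaller omissions: Lemma~\ref{lem:norm-principle-pieces}(iv) needs $1\le\alpha(c)\le 2e_{\mathfrak{p}}$, which by Proposition~\ref{prop:b-c-alpha}(ii) requires $R>-2e_{\mathfrak{p}}$; one must rule out $R=-2e_{\mathfrak{p}}$ by noting it would force $d(-a)=\infty$, i.e. $a\in(-1/4)\mathcal{O}_F^{\times 2}$, which was excluded. You verify the analogous bound for $b$ but not for $c$.
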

		\begin{proof}
			 If $a\in \mathscr{S}_{F}$, then, by Lemma \ref{lem:phia-extension}(ii), $\widetilde{a}\in \mathscr{S}_{E}$. Hence, by Proposition \ref{prop:G-g}(i), $G(a)=\langle a\rangle g(\phi(a))F^{\times 2}$ and $\widetilde{G}=\langle \tilde{a}\rangle \widetilde{g}(\widetilde{\phi}(a))E^{\times 2}$. So, by Lemmas \ref{lem:norm-principle-pieces}(i) and \ref{lem:norm-principle-ga},  
				\begin{equation*} 
					\begin{aligned}
							N_{E/F}(\widetilde{G}(a))=N_{E/F}(\langle \tilde{a}\rangle\widetilde{g}(\widetilde{\phi}(a))E^{\times 2})&=N_{E/F}( \langle a\rangle E^{\times 2})N_{E/F}(\widetilde{g}(\phi(a))E^{\times 2})\\
						&\subseteq \langle a\rangle g(\phi(a))F^{\times 2}=G(a).
					\end{aligned}			
			\end{equation*}
			
			If $a\not\in \mathscr{S}_{F}$,  by Proposition \ref{prop:G-g}(ii) and (iii), $G(a)=N(-a)$ and  $\widetilde{G}(a)\subseteq\widetilde{N}(-a) $. Hence, by Lemma \ref{lem:norm-principle-pieces}(ii),
			\begin{align*}
				N_{E/F}(\widetilde{G}(a))\subseteq N_{E/F}(\widetilde{N}(-a))\subseteq N(-a)=G(a).
			\end{align*}
		\end{proof}
\begin{lem}\label{lem:Ga-subseteq-O_F2}
			Suppose that either $a\in \mathscr{A}_{F}$ or $a\not\in \mathscr{A}_{F} $ and $\Delta_{F}\not\in E^{\times 2}$. If $G(a)\subseteq \mathcal{O}_{F}^{\times}F^{\times 2}$, then $\widetilde{G}(a)\subseteq \mathcal{O}_{E}^{\times}E^{\times 2}$.
		\end{lem}
		\begin{proof}
			Since $G(a)\subseteq \mathcal{O}_{F}^{\times}F^{\times 2}$, by Proposition \ref{prop:GaOF-equiv}, $R$ is even, so is $\widetilde{R}$.

			Assume $a\in \mathscr{A}_{F}$. Then, by Proposition \ref{prop:GaOF-equiv}(i), either 
			\begin{align*}
				d(-a)=2e_{\mathfrak{p}}=-R\quad\text{or} \quad a\in \mathscr{S}_{F}.
			\end{align*}
			If $d(-a)=2e_{\mathfrak{p}}=-R$, by Lemma \ref{lem:defectP-p}(ii), $ \widetilde{d}(-a)=2e_{\mathfrak{P}}$ or $\infty$. In the first case, $\widetilde{d}(-a)=2e_{\mathfrak{P}}=-\widetilde{R}$; in the second case, $\widetilde{d}(-a)=\infty>\widetilde{S}$ and thus $\widetilde{a}\in \mathscr{S}_{E}$. If $a\in \mathscr{S}_{F}$, by Lemma \ref{lem:phia-extension}(ii), we also have $\widetilde{a}\in \mathscr{S}_{E}$.
			Hence, in both cases, $\widetilde{R}$ is even and either
			\begin{align*}
				\widetilde{d}(-a)=2e_{\mathfrak{P}}=-\widetilde{R}\quad\text{or}\quad  \widetilde{a}\in \mathscr{S}_{E},
			\end{align*}
			as desired.
			
			Assume that $a\not\in \mathscr{A}_{F}$ and $\Delta_{F}\not\in E^{\times 2}$. Then, by Proposition \ref{prop:GaOF-equiv}(ii),  $d(-a)=2e_{\mathfrak{p}}<-R$. So, by Lemma \ref{lem:defectP-p}(ii), $\widetilde{d}(-a)=2e_{\mathfrak{P}}<-\widetilde{R} $. 
			
			Now, by Proposition \ref{prop:GaOF-equiv}, we deduce that $\widetilde{G}(a)\subseteq \mathcal{O}_{E}^{\times}E^{\times 2}$, whenever $a\in \mathscr{A}_{F}$ or $a\not\in \mathscr{A}_{F}$ and $\Delta_{F}\not\in E^{\times 2}$.
		\end{proof}

		In the rest of this section, we keep the setting as in Section \ref{sec:BONGs}. Let $M\cong \prec a_{1},\ldots,a_{m}\succ$  relative to a good BONG $x_{1},\ldots,x_{m}$ with $R_{i}=\ord(a_{i})$. Then, by Proposition \ref{prop:R-extension} and Lemma \ref{lem:extension-BONG}, $\widetilde{R}_{i}=\ord_{\mathfrak{P}}(\widetilde{a}_{i})=R_{i}e_{\pmid}$ and
	$ \widetilde{M} =M\otimes_{\mathcal{O}_{F}} \mathcal{O}_{E}\cong \prec \widetilde{a}_{1},\ldots,\widetilde{a}_{m}\succ $ relative to the good BONG  $\widetilde{x}_{1},\ldots,\widetilde{x}_{m}$.
		\begin{lem}\label{lem:extension-property-A}
			If $M$ has property A, then $\widetilde{M}$ has property A.
		\end{lem}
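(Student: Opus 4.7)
The plan is to lift the Jordan splitting of $M$ component-wise to a Jordan splitting of $\tilde{M}$ by tensoring with $\mathcal{O}_E$, and then to observe that multiplying the property A inequalities by the positive integer $e_{\pmid}$ preserves them, using the scaling formulas for scale, norm, and volume proved in Proposition \ref{prop:scale-norm-volume}.

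Concretely, I would start with a Jordan splitting $M = M_1 \perp \cdots \perp M_t$ exhibiting property A, so that $\rank M_i \le 2$ for each $i$ and the strict inequalities of Definition \ref{defn:A}(i) hold for all $i < j$. Tensoring each summand over $\mathcal{O}_F$ with $\mathcal{O}_E$ produces an orthogonal decomposition
$$\tilde{M} = \tilde{M}_1 \perp \cdots \perp \tilde{M}_t, \qquad \tilde{M}_i := M_i \otimes_{\mathcal{O}_F} \mathcal{O}_E,$$
in which $\rank \tilde{M}_i = \rank M_i \le 2$.

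The first step is to verify that the above decomposition is a Jordan splitting of $\tilde{M}$. Modularity of each $\tilde{M}_i$ follows from Proposition \ref{prop:scale-norm-volume}(i)(ii): since $\mathfrak{v}(M_i) = \mathfrak{s}(M_i)^{\rank M_i}$, applying the order formulas $\ord_{\mathfrak{P}}(\mathfrak{s}(\tilde{M}_i)) = e_{\pmid} \ord_{\mathfrak{p}}(\mathfrak{s}(M_i))$ and $\ord_{\mathfrak{P}}(\mathfrak{v}(\tilde{M}_i)) = e_{\pmid} \ord_{\mathfrak{p}}(\mathfrak{v}(M_i))$ yields $\mathfrak{v}(\tilde{M}_i) = \mathfrak{s}(\tilde{M}_i)^{\rank \tilde{M}_i}$. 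Moreover, the orders $\ord_{\mathfrak{P}}(\mathfrak{s}(\tilde{M}_i))$ are strictly increasing in $i$ because $e_{\pmid} > 0$ and the orders $\ord_{\mathfrak{p}}(\mathfrak{s}(M_i))$ are strictly increasing. Hence $\tilde{M}_1 \perp \cdots \perp \tilde{M}_t$ is a valid Jordan splitting of $\tilde{M}$.

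The second step is to check the defining inequalities. For any $i < j$, multiplying
$$0 < \ord_{\mathfrak{p}} \mathfrak{n}(M_j) - \ord_{\mathfrak{p}} \mathfrak{n}(M_i) < 2\bigl(\ord_{\mathfrak{p}} \mathfrak{s}(M_j) - \ord_{\mathfrak{p}} \mathfrak{s}(M_i)\bigr)$$
through by the positive integer $e_{\pmid}$ and applying Proposition \ref{prop:scale-norm-volume}(i) converts this directly into the analogous inequality in $\ord_{\mathfrak{P}}$, which is exactly property A for $\tilde{M}$. There is no genuine obstacle here: the verification is a routine combination of the behavior of ideal invariants under a (possibly ramified) extension with the fact that the property A inequalities are homogeneous and therefore invariant under positive integer scaling.
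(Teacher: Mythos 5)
Your proposal is correct and follows essentially the same route as the paper, which simply observes that the conclusion follows from Definition \ref{defn:A} together with the order formulas of Proposition \ref{prop:scale-norm-volume}: the Jordan splitting lifts component-wise and the property A inequalities, being homogeneous, are preserved under multiplication by $e_{\pmid}$. You merely spell out the (standard) verification that the lifted decomposition is again a Jordan splitting, which the paper leaves implicit.
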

		\begin{proof}
			This is clear from Definition  \ref{defn:A} and Proposition \ref{prop:scale-norm-volume}(i).
		\end{proof}
		\begin{lem}\label{lem:norm-principle-A}
			If $M$ has property A, then $N_{E/F}(\theta(\widetilde{M}))\subseteq \theta(M)$.
		\end{lem}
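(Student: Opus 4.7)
The plan is to combine the explicit formula for $\theta(M)$ provided by Theorem~\ref{thm:integralspinornorm-A} with the factor-wise norm-principle lemmas already established, then transport everything through $N_{\pmid}$ piece by piece.

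First, by Lemma~\ref{lem:extension-property-A}, $\tilde{M}$ also has property~A, so Theorem~\ref{thm:integralspinornorm-A} applies to both $M$ and $\tilde{M}$, yielding
\[
\theta(M)=\prod_{i=1}^{m-1}G(a_{i+1}/a_i)\cdot(1+\mathfrak{p}^{\gamma})F^{\times 2},\qquad
\theta(\tilde{M})=\prod_{i=1}^{m-1}\tilde{G}(a_{i+1}/a_i)\cdot(1+\mathfrak{P}^{\tilde{\gamma}})E^{\times 2},
\]
where $\gamma=\min_{1\le i\le m-2}\lfloor (R_{i+2}-R_i)/2\rfloor$ and $\tilde{\gamma}$ is its analogue for $\tilde{M}$. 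By Proposition~\ref{prop:R-extension}(i), $\tilde{R}_{i+2}-\tilde{R}_i=(R_{i+2}-R_i)e_{\pmid}$, and the elementary inequality $\lfloor e_{\pmid}x/2\rfloor\ge e_{\pmid}\lfloor x/2\rfloor$ for integers $x\ge 0$ (checked separately in the parities of $x$ and $e_{\pmid}$) gives $\tilde{\gamma}\ge e_{\pmid}\gamma$.

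Next I would apply $N_{\pmid}$ factor by factor. Since $N_{\pmid}$ is multiplicative, $N_{\pmid}(AB)\subseteq N_{\pmid}(A)N_{\pmid}(B)$ for any subsets $A,B\subseteq E^{\times}$. Lemma~\ref{lem:norm-principle-Ga} handles each Gauss factor, giving $N_{\pmid}(\tilde{G}(a_{i+1}/a_i))\subseteq G(a_{i+1}/a_i)$. For the principal-unit factor, when $\gamma\ge 1$ Lemma~\ref{lem:norm-principle-hk} with $h=k=\gamma$ combined with $\tilde{\gamma}\ge e_{\pmid}\gamma$ produces
\[
N_{\pmid}(1+\mathfrak{P}^{\tilde{\gamma}})\subseteq N_{\pmid}(1+\mathfrak{P}^{e_{\pmid}\gamma})\subseteq 1+\mathfrak{p}^{\gamma};
\]
when $\gamma=0$, the conclusion is trivial since $N_{\pmid}(\mathcal{O}_E^{\times})\subseteq\mathcal{O}_F^{\times}\subseteq(1+\mathfrak{p}^0)F^{\times 2}$. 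Finally $N_{\pmid}(E^{\times 2})\subseteq F^{\times 2}$ is immediate. Multiplying all of these inclusions gives $N_{\pmid}(\theta(\tilde{M}))\subseteq\theta(M)$.

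The bulk of the proof is really bookkeeping, so I do not expect any serious obstacle; the key substantive inputs (stability of property~A under the extension, the comparison $\tilde{\gamma}\ge e_{\pmid}\gamma$, and the factor-wise norm principles) are already in hand. The only delicate check is the floor-function estimate for $\tilde{\gamma}$, together with the small-$\gamma$ edge case, but both are straightforward.
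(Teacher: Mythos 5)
Your proof is correct and follows essentially the same route as the paper: both apply Theorem~\ref{thm:integralspinornorm-A} to $M$ and $\tilde{M}$ (via Lemma~\ref{lem:extension-property-A} and Lemma~\ref{lem:extension-BONG}) and then push each factor through $N_{\pmid}$ using Lemmas~\ref{lem:norm-principle-Ga} and~\ref{lem:norm-principle-hk} together with multiplicativity. The only cosmetic difference is that the paper asserts the equality $\tilde{\gamma}=\gamma e_{\pmid}$, whereas you establish only the inequality $\tilde{\gamma}\ge e_{\pmid}\gamma$ (plus the $\gamma=0$ edge case), which is all the argument actually needs.
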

		\begin{proof}
			By Theorem \ref{thm:integralspinornorm-A}, we have
			\begin{align*}
				\theta(M)=G(a_{2}/a_{1}) \cdots G(a_{m}/a_{m-1})(1+\mathfrak{p}^{\gamma})F^{\times 2},
			\end{align*}
			where $\gamma=\gamma(M)$. Also, by \eqref{gammaM} and Remark \ref{re:property-A-B}(ii), $\gamma=\lfloor k\rfloor$ for some $k\in \mathbb{R}$ with $k>0 $.
			
				By Lemma \ref{lem:extension-property-A}, $\widetilde{M}$ also satisfies property $A$. Hence, by Lemma \ref{lem:extension-BONG} and Theorem \ref{thm:integralspinornorm-A}, we also have 
			 \begin{align*}
			 	\theta(\widetilde{M})=\widetilde{G}(a_{2}/a_{1}) \cdots \widetilde{G}(a_{m}/a_{m-1})(1+\mathfrak{P}^{\widetilde{\gamma}})E^{\times 2},
			 \end{align*}
			 where $\widetilde{\gamma}=\gamma(\widetilde{M})=\lfloor ke_{\pmid}\rfloor$. Since $\widetilde{\gamma}=\lfloor ke_{\pmid}\rfloor \ge \lfloor k\rfloor e_{\pmid}=\gamma e_{\pmid}$, by Lemma \ref{lem:norm-principle-hk}, we deduce that
			\begin{align*}
				N_{E/F}((1+\mathfrak{P}^{\widetilde{\gamma}})E^{\times 2}) \subseteq (1+\mathfrak{p}^{\gamma})F^{\times 2}.
			\end{align*}
			 By Lemma \ref{lem:norm-principle-Ga}, we also see that $
				N_{E/F}(\widetilde{G}(a_{i+1}/a_{i}))\subseteq G(a_{i+1}/a_{i})$
		 for $1\le i\le m-1$. So we are done by the multiplicity of $N_{E/F}$.
		\end{proof}

\begin{lem}\label{lem:thetaM-subseteq-O_F2}
If $\theta(M)\subseteq \mathcal{O}_{F}^{\times}F^{\times 2}$, then $\theta(\widetilde{M})\subseteq \mathcal{O}_{E}^{\times}E^{\times 2}$.
\end{lem}
\begin{proof}
If $\theta(M)\subseteq\mathcal{O}_{F}^{\times}F^{\times 2}$, then, by \cite[Theorem 3]{beli_integral_2003}, we have
\begin{align*}
	G(a_{i+1}/a_{i})&\subseteq \mathcal{O}_{F}^{\times}F^{\times 2}\quad\text{for any $1\le i\le m-1$},\quad\text{and}\\
	(R_{i+1}-R_{i})/2&\equiv e_{\mathfrak{p}}\pmod{2}\quad\text{for any $1\le i\le m-2$ with $R_{i}=R_{i+2}$},
\end{align*}
where $a_{i+1}/a_{i}\in \mathscr{A}_{F}$ from \eqref{eq:BONGs}. By Lemma \ref{lem:Ga-subseteq-O_F2}, we further have
\begin{align*}
	\widetilde{G}(a_{i+1}/a_{i})&\subseteq \mathcal{O}_{E}^{\times}E^{\times 2}\quad\text{for any $1\le i\le m-1$},\quad\text{and}\\
	(\widetilde{R}_{i+1}-\widetilde{R}_{i})/2&\equiv e_{\mathfrak{P}}\pmod{2}\quad\text{for any $1\le i\le m-2$ with $\widetilde{R}_{i}=\widetilde{R}_{i+2}$}.	 	
\end{align*}
So, again by \cite[Theorem 3]{beli_integral_2003}, we have  $\theta(\widetilde{M})\subseteq \mathcal{O}_{E}^{\times}E^{\times 2}$.
\end{proof}
\begin{cor}\label{cor:norm-principle-not-A}
If $M$ does not have property A, then $N_{E/F}(\theta(\widetilde{M}))\subseteq \theta(M)$.
\end{cor}
\begin{proof}
From Remark \ref{re:property-A-B}(i), $\theta(M)=F^{\times}$ or $\mathcal{O}_{F}^{\times}F^{\times 2}$. The former case is clear. For the latter case, by Lemma \ref{lem:thetaM-subseteq-O_F2}, we see that 
\begin{align*}
	 	N_{E/F}(\theta(\widetilde{M}))\subseteq N_{E/F}(\mathcal{O}_{E}^{\times}E^{\times 2})\subseteq \mathcal{O}_{F}^{\times}F^{\times 2}=\theta(M).
	 \end{align*}	 
		\end{proof}
		\begin{proof}[Proof of Theorem \ref{thm:norm-principle-M}]
			This immediately follows by Lemma \ref{lem:norm-principle-A} or Corollary \ref{cor:norm-principle-not-A}, according as $M$ satisfies property A or not.
		\end{proof}

		As in Section \ref{sec:BONGs}, let $M\cong \prec a_{1},\ldots,a_{m}\succ$ and $N\cong \prec b_{1},\ldots, b_{n}\succ$ relative to some good BONGs $x_{1},\ldots,x_{m}$ and $y_{1},\ldots,y_{n}$ with $a_{i}=\pi^{R_{i}}\varepsilon_{i}$ and $b_{i}=\pi^{S_{i}}\eta_{i}$, where $R_{i},S_{i}\in \mathbb{Z}$ and $\varepsilon_{i},\eta_{i}\in \mathcal{O}_{F}^{\times}$. Then  $\widetilde{a}_{i}=\Pi^{\widetilde{R}_{i}} \varepsilon_{i}^{\prime}$ and $\widetilde{b}_{i}=\Pi^{\widetilde{S}_{i}}\eta_{i}^{\prime}$, where $\widetilde{R}_{i}=R_{i}e_{\pmid}$, $\widetilde{S}_{i}=S_{i}e_{\pmid}$, $\varepsilon_{i}^{\prime}=\varepsilon_{i}\lambda^{R_{i}}$ and $\eta_{i}^{\prime}=\eta_{i}\lambda^{S_{i}}$. Thus, from Lemma \ref{lem:extension-BONG},
		\begin{align*}
			\widetilde{M}&=M\otimes_{\mathcal{O}_{F}} \mathcal{O}_{E}\cong \prec \widetilde{a}_{1},\ldots,\widetilde{a}_{m}\succ\quad\text{and}\quad\widetilde{N} =N\otimes_{\mathcal{O}_{F}} \mathcal{O}_{E}\cong \prec \widetilde{b}_{1},\ldots,\widetilde{b}_{n}\succ
		\end{align*}
		relative to the good BONGs $\widetilde{x}_{1},\ldots,\widetilde{x}_{m}$ and $\widetilde{y}_{1},\ldots,\widetilde{y}_{n}$.
		
		 For $1\le i\le \min\{m-1,n\}$, let $T_{i}=\max\{S_{i-1},R_{i+1}\}-\min\{S_{i},R_{i+2}\}$ and $c_{i}=\pi^{T_{i}}\xi_{i}$, with $ \xi_{i}= \varepsilon_{1,i+1}\eta_{1,i-1}\in \mathcal{O}_{F}^{\times }$.
		 
		 If $R_{i+2}>S_{i}$ for each $1\le i\le m-2$, then
		 \begin{align*}
		 	T_{i}=R_{i+1}-S_{i}\quad\text{and}\quad\pi^{R_{i+1}-S_{i}}\xi_{i}=c_{i};
		 \end{align*}
		 and if we put $Z_{i} =\sum_{k=1}^{i}(R_{k}+S_{k})$, then $Z_{i}\equiv\sum_{k=1}^{i}(R_{k}-S_{k})\pmod{2} $. Hence, \eqref{Gi} in Theorem \ref{thm:Beli-II2} can be rephrased as  
		\begin{align}\label{Gi-new}
			G_{i}=\begin{cases}
				G(c_{i}) &\text{if $i\le n$ and $Z_{i}\equiv 0\pmod{2}$}, \\
				N(-a_{1,i+1} b_{1,i-1})  &\text{otherwise}.
			\end{cases}
		\end{align}
	 Now, let $\widetilde{T}_{i}=T_{i}e_{\pmid}$ and $c_{i}^{\prime}=\Pi^{\widetilde{T}_{i}}\xi_{i}^{\prime}$, with $ \xi_{i}^{\prime}=\varepsilon_{1,i+1}^{\prime}\eta_{1,i-1}^{\prime}$. Similarly, put $\widetilde{Z}_{i} =Z_{i} e_{\pmid}$. Then $  \widetilde{Z}_{i}\equiv\sum_{k=1}^{i}(\widetilde{R}_{k}-\widetilde{S}_{k})\pmod{2} $. Hence, \eqref{Gi} for $\widetilde{M}$ and $\widetilde{N}$ is given by 
		 \begin{align}\label{Gi-new-extension}
			\widetilde{G}_{i}=\begin{cases}
				\widetilde{G}(c_{i}^{\prime}) &\text{if $i\le n$ and $\widetilde{Z}_{i}\equiv 0\pmod{2}$}, \\
				\widetilde{N}(-a_{1,i+1} b_{1,i-1})  &\text{otherwise}.
			\end{cases}
		\end{align}
 
     \begin{lem}\label{lem:c=ab}
     		Let $1\le i\le \min\{m-1,n\}$.
     	\begin{enumerate}[itemindent=-0.5em,label=\rm (\roman*)]
     	\item If $T_{i}=R_{i+1}-S_{i}$ and  $Z_{i}\equiv 0\pmod{2}$, then $c_{i}=a_{1,i+1}b_{1,i-1}$ in $F^{\times}/F^{\times 2}$.
     	
     	\item If $R_{1}\equiv\cdots\equiv R_{m}\equiv S_{1}\equiv \cdots \equiv S_{n}\pmod{2}$, then $c_{i}=a_{1,i+1}b_{1,i-1}=\xi_{i}$ in $F^{\times}/F^{\times 2}$.
     \end{enumerate}
     \end{lem}
    \begin{proof}
    	We have $c_{i}=\pi^{T_{i}}\xi_{i}$ and $a_{1,i+1}b_{1,i-1}=\pi^{Z_{i}+T_{i}}\xi_{i}$. If $Z_{i}\equiv 0\pmod{2}$, then, in $F^{\times}/F^{\times 2}$, $a_{1,i+1}b_{1,i-1}=\pi^{T_{i}}\xi_{i}=c_{i}$. If $R_{1}\equiv\cdots\equiv R_{m}\equiv S_{1}\equiv \cdots \equiv S_{n}\pmod{2}$, then $T_{i}$ and $Z_{i}$ are even, so, in $F^{\times}/F^{\times 2}$, $c_{i}=a_{1,i+1}b_{1,i-1}=\xi_{i}$.
    \end{proof}

		 \begin{lem}\label{lem:ci=Ci}
		 				Let $1\le i\le \min\{m-1,n\}$.
		 \begin{enumerate}[itemindent=-0.5em,label=\rm (\roman*)]
		 	\item	Suppose that $T_{i}=R_{i+1}-S_{i}$.
		   	\begin{enumerate}[itemindent=-0.5em,label=\rm (\roman*)]
		 		\item[(a)]	If $Z_{i}\equiv 0\pmod{2}$, then $c_{i}^{\prime}\in c_{i}\mathcal{O}_{E}^{\times 2}$.
		 		
		 		\item[(b)]  If $e_{\pmid}\equiv 0\pmod{2}$, then $c_{i}^{\prime}\in a_{1,i+1}b_{1,i-1}E^{\times 2}$.
		 		
		 	\end{enumerate}
		 	
		 	\item    If $R_{1}\equiv\cdots\equiv R_{m}\equiv S_{1}\equiv \cdots \equiv S_{n}\pmod{2}$, then $\xi_{i}^{\prime}\in \xi_{i}E^{\times 2}$ and $c_{i}^{\prime}\in c_{i}\mathcal{O}_{E}^{\times 2}$.
		\end{enumerate}
		 	 \end{lem}
		 	 \begin{proof}
		 	 	For (i)(a) and (ii), if  $Z_{i}\equiv 0\pmod{2}$, then $\widetilde{Z}_{i}\equiv 0\pmod{2}$, applying Lemma \ref{lem:c=ab} (i) to $c_{i}$ and $c_{i}^{\prime}$, we have $c_{i}=a_{1,i+1}b_{1,i-1}$ in $F^{\times}/F^{\times 2}$ and thus in $E^{\times}/E^{\times 2}$, and $c_{i}^{\prime}=a_{1,i+1}b_{1,i-1}$ in  $E^{\times}/E^{\times 2}$. Thus, in $E^{\times}/E^{\times 2}$, $c_{i}^{\prime}= c_{i} $. If $R_{1}\equiv\cdots\equiv R_{m}\equiv S_{1}\equiv \cdots \equiv S_{n}\pmod{2}$, by Lemma \ref{lem:c=ab}(ii), $\xi_{i}=c_{i}=a_{1,i+1}b_{1,i-1}$ in $F^{\times}/F^{\times 2} (\subseteq E^{\times}/E^{\times 2})$ and	$\xi_{i}^{\prime}=c_{i}^{\prime}=a_{1,i+1}b_{1,i-1}$ in $E^{\times}/E^{\times 2}$. Hence $\xi_{i}^{\prime}=\xi_{i}=c_{i}^{\prime}=c_{i}$ in $E^{\times}/E^{\times 2}$.
		 	 	
		 	 	In both cases,  $\ord_{\mathfrak{P}}(c_{i}^{\prime})=\widetilde{T}_{i}=T_{i}e_{\pmid}=\ord_{\mathfrak{p}}(c_{i})e_{\pmid}=\ord_{\mathfrak{P}}(c_{i})$. So, by Proposition \ref{prop:a=b-inOE}, $c_{i}^{\prime}\in c_{i}\mathcal{O}_{E}^{\times 2}$.
		 	 	
		 	 	For (i)(b), if $e_{\pmid}\equiv 0\pmod{2}$, then $\widetilde{Z}_{i}\equiv 0\pmod{2}$ and $\widetilde{T}_{i}=\widetilde{R}_{i+1}-\widetilde{S}_{i}$. So, by Lemma \ref{lem:c=ab}(i), $c_{i}^{\prime}=a_{1,i+1}b_{1,i-1}$ in $E^{\times}/E^{\times 2}$.
\end{proof}
 \begin{lem}\label{lem:tilde-Gi-GCi}
			Let $1\le i\le \min\{m-1,n\}$.
			 \begin{enumerate}[itemindent=-0.5em,label=\rm (\roman*)]
			\item 	Suppose that $T_{i}=R_{i+1}-S_{i}$.
			\begin{enumerate}[itemindent=-0.5em,label=\rm (\roman*)]
				\item[(a)]	If $Z_{i}\equiv 0\pmod{2}$, then $\widetilde{G}_{i}=\widetilde{G}(c_{i})$.
 			
				\item[(b)]   If  $Z_{i}\not\equiv 0\pmod{2}$, then $\widetilde{G}_{i}\subseteq \widetilde{N}(-a_{1,i+1}b_{1,i-1})$.
			 \end{enumerate}
			 
				\item If $R_{1}\equiv\cdots\equiv R_{m}\equiv S_{1}\equiv \cdots \equiv S_{n}\pmod{2}$, then $\widetilde{d}(-\xi_{i}^{\prime})=\widetilde{d}(-\xi_{i})$ and  $\widetilde{G}(c_{i}^{\prime})=\widetilde{G}(c_{i})$.
			\end{enumerate}
		\end{lem}
		\begin{proof}
			 (i) 	If $Z_{i}\equiv 0\pmod{2}$, then  $ \widetilde{Z}_{i} \equiv 0\pmod{2}$. Hence, by \eqref{Gi-new-extension} and Lemma \ref{lem:ci=Ci}(i)(a), $\widetilde{G}_{i}=\widetilde{G}(c_{i}^{\prime})=\widetilde{G}(c_{i})$.
			
			(ii)  Suppose that $Z_{i}\not\equiv 0\pmod{2}$. If $e_{\pmid}$ is odd, then $\widetilde{Z}_{i} \not\equiv 0\pmod{2}$ and thus by \eqref{Gi-new-extension},  $\widetilde{G}_{i} =\widetilde{N}(-a_{1,i+1}b_{1,i-1})$. If $e_{\pmid}$ is even, then, by \eqref{Gi-new-extension}, Proposition \ref{prop:G-g}(iii) and Lemma \ref{lem:ci=Ci}(i)(b), $
			 \widetilde{G}_{i}=\widetilde{G}(c_{i}^{\prime})\subseteq\widetilde{N}(-c_{i}^{\prime})=\widetilde{N}(-a_{1,i+1}b_{1,i-1})$.
			
			 (iii) This is clear from Lemma \ref{lem:ci=Ci}(ii).
		\end{proof}
		
	\begin{lem}\label{lem:norm-principle-G}
	 Let $1\le i\le m-1$. Suppose that $T_{i}=R_{i+1}-S_{i}$. Then $N_{E/F}(\widetilde{G}_{i})\subseteq G_{i}$.
	\end{lem}
		\begin{proof}
			 If $Z_{i}\equiv 0\pmod{2}$, then  $  \widetilde{Z}_{i}\equiv 0\pmod{2}$. By \eqref{Gi-new}, $G_{i}=G(c_{i})$ and by Lemma \ref{lem:tilde-Gi-GCi}(i), $\widetilde{G}_{i} =\widetilde{G}(c_{i})$. So, by Lemma \ref{lem:norm-principle-Ga},
			\begin{align*}
				N_{E/F}(\widetilde{G}_{i})=N_{E/F}(\widetilde{G}(c_{i}))\subseteq G(c_{i})=G_{i}.
			\end{align*}
			
			Assume that $Z_{i}\not\equiv 0\pmod{2}$. By \eqref{Gi-new}, $G_{i}=N(-a_{1,i+1}b_{1,i-1})$ and by Lemma \ref{lem:tilde-Gi-GCi}(ii), $\widetilde{G}_{i}\subseteq \widetilde{N}(-a_{1,i+1}b_{1,i-1})$. Hence, by Proposition \ref{prop:G-g}(iii) and Lemma \ref{lem:norm-principle-pieces}(ii),
			  \begin{align*}
			 	N_{E/F}(\widetilde{G}_{i})\subseteq N_{E/F}(\widetilde{N}(-a_{1,i+1}b_{1,i-1})) \subseteq N(-a_{1,i+1}b_{1,i-1})=G_{i}.
			 \end{align*} 
		\end{proof}
\begin{lem}\label{lem:norm-principle-M-N-I}
If $m-n\le 2$ and $R_{i+2}>S_{i}$ for each $1\le i\le m-2$, then $N_{E/F}(\theta(\widetilde{M}/\widetilde{N}))\subseteq \theta(M/N)$.  
\end{lem}
\begin{proof}
By Proposition \ref{prop:R-extension}(i), $\widetilde{R}_{i+2}>\widetilde{S}_{i}$ for each $1\le i\le m-2$. Hence, applying Theorem \ref{thm:Beli-II2}(i) to $M,N$ (resp. $\widetilde{M},\widetilde{N}$), we see that
\begin{align*}
	\theta(M/N)&=\theta(M)G_{1}\cdots G_{m-1}(1+\mathfrak{p}^{\gamma})F^{\times 2}, \\
				\theta(\widetilde{M}/\widetilde{N})&=\theta(\widetilde{M})\widetilde{G}_{1}\cdots \widetilde{G}_{m-1}(1+\mathfrak{P}^{\widetilde{\gamma}})E^{\times 2}. 
\end{align*}
From \eqref{gammaMN} and the hypothesis, $\gamma=\lfloor k\rfloor$ for some $k\in \mathbb{R}$ with $k>0$ and $\widetilde{\gamma}=\gamma(\widetilde{M},\widetilde{N})=\lfloor ke_{\pmid} \rfloor $.  As in Lemma \ref{lem:norm-principle-A}, we have $\widetilde{\gamma}\ge  \gamma e_{\pmid}$. So, by Lemma \ref{lem:norm-principle-hk}, $N_{E/F}((1+\mathfrak{P}^{\widetilde{\gamma}})E^{\times 2})\subseteq (1+\mathfrak{p}^{\gamma})F^{\times 2}$. By Theorem \ref{thm:norm-principle-M} and Lemma \ref{lem:norm-principle-G}, $N_{E/F}(\theta(\widetilde{M}))\subseteq \theta(M)$ and $N_{E/F}(\widetilde{G}_{i})\subseteq G_{i}$ for $1\le i\le m-1$. So the assertion follows from the multiplicity of $N_{E/F}$.
\end{proof}
		 
\begin{lem}\label{lem:dc=2ep-exist}
			Let $c\in F^{\times}$. If   $d(c)=2e_{\mathfrak{p}}$, then $\widetilde{d}(c)=2e_{\mathfrak{P}}$ or $N_{E/F}(E^{\times})\subseteq \mathcal{O}_{F}^{\times}F^{\times 2}$.
\end{lem}
			\begin{proof}
			If $c\not\in E^{\times 2}$, by Lemma \ref{lem:defectP-p}(ii),  $\widetilde{d}(c)=d(c)e_{\pmid}=2e_{\mathfrak{p}}e_{\pmid}=2e_{\mathfrak{P}}$, as desired.
			
			Assume that  $c\in E^{\times 2}$. Recall that $d(c)=2e_{\mathfrak{p}}$ if and only if $c\in \Delta_{F} F^{\times 2}$. Then $K\coloneqq F(\sqrt{c})=F(\sqrt{\Delta_{F}})$ is an unramified quadratic extension over $F$ contained in $E$. Hence, by \cite[63:16]{omeara_quadratic_1963}
			\begin{align*}
			N_{E/F}(E^{\times})=N_{K/F}(N_{E/K}(E^{\times}))\subseteq N_{K/F}(K^{\times})=\mathcal{O}_{F}^{\times}F^{\times 2}.		
			\end{align*}
		\end{proof}
\begin{lem}\label{lem:thetaOF}
If $\theta(M/N)\subseteq \mathcal{O}_{F}^{\times}F^{\times 2}$, then either $\theta(\widetilde{M}/\widetilde{N})\subseteq \mathcal{O}_{E}^{\times}E^{\times 2}$ or $N_{E/F}(E^{\times})\subseteq \mathcal{O}_{F}^{\times}F^{\times 2}$.
\end{lem}
		\begin{proof}
			 	First, by Theorem \ref{thm:Beli-II3}, conditions (i)-(iv) hold for $M$ and $N$. We denote by (i')-(iv') the conditions (i)-(iv) applied to $\widetilde{M}$ and $\widetilde{N}$. Then, multiplying the congruences in (i) and inequalities and equalities  in (ii) by $e_{\pmid}$ and using Proposition \ref{prop:R-extension}(i), we see that conditions (i') and (ii') are fulfilled for $\widetilde{M}$ and $\widetilde{N}$. By Lemma \ref{lem:thetaM-subseteq-O_F2}, condition (iv') is also fulfilled.
			 	
			 	It suffices to show that either condition (iii') is fulfilled or $N_{E/F}(E^{\times})\subseteq \mathcal{O}_{F}^{\times}F^{\times 2}$. Recall from condition (i) that  $R_{1}\equiv\cdots\equiv R_{m}\equiv S_{1}\equiv \cdots \equiv S_{n}\pmod{2}$. Thus Lemma \ref{lem:tilde-Gi-GCi}(ii) can be applied.
			 	
			 	For $1\le i\le m-1$, from condition (iii) we have either $d(-\xi_{i})=2e_{\mathfrak{p}}$, or $i\le n$ and $G(c_{i})\subseteq \mathcal{O}_{F}^{\times}F^{\times 2}$. 
			 	
			 	In the first case, Lemmas \ref{lem:tilde-Gi-GCi}(ii) and \ref{lem:dc=2ep-exist} imply that $\widetilde{d}(-\xi_{i}^{\prime})=\widetilde{d}(-\xi_{i})=2e_{\mathfrak{P}}$ or  $N_{E/F}(E^{\times})\subseteq \mathcal{O}_{F}^{\times}F^{\times 2}$, as desired.
			 	
			 	In the second case, from Proposition \ref{prop:GaOF-equiv}, we have either $c_{i}\in \mathscr{A}_{F}$ or $d(-\xi_{i})=d(-c_{i})=2e_{\mathfrak{p}}$. In the latter case, the assertion holds by Lemma \ref{lem:dc=2ep-exist} as the first case; in the former case, Lemmas \ref{lem:tilde-Gi-GCi}(ii) and \ref{lem:Ga-subseteq-O_F2} imply that $\widetilde{G}(c_{i}^{\prime})=\widetilde{G}(c_{i})\subseteq \mathcal{O}_{E}^{\times }E^{\times 2}$, which means that condition (iii') is fulfilled.
		\end{proof}
		\begin{lem}\label{lem:norm-principle-M-N-II}
			 If $m-n\le 2$ and $R_{j+2}\le S_{j}$ for some $1\le j\le m-2$, then $N_{E/F}(\theta(\widetilde{M}/\widetilde{N}))\subseteq \theta(M/N)$. 
 	\end{lem}
		\begin{proof}
		 	   By Theorem \ref{thm:Beli-II2}(ii), $\theta(M/N)\supseteq \mathcal{O}_{F}^{\times}F^{\times 2}$. We may assume $\theta(M/N)=\mathcal{O}_{F}^{\times}F^{\times 2}$. Then, by Lemma \ref{lem:thetaOF}, 
		 	\begin{align*}
		 		N_{E/F}(\theta(\widetilde{M}/\widetilde{N}))&\subseteq  N_{E/F}(\mathcal{O}_{E}^{\times}E^{\times 2})\subseteq \mathcal{O}_{F}^{\times}F^{\times 2}\quad\text{or}\\
		 		N_{E/F}(\theta(\widetilde{M}/\widetilde{N}))&\subseteq  N_{E/F}(E^{\times})\subseteq \mathcal{O}_{F}^{\times}F^{\times 2}.
		 	\end{align*} 
		\end{proof}

		\begin{proof}[Proof of Theorem \ref{thm:norm-principle-M-N}]
		The theorem follows from Remark \ref{re:m-n3}, Lemmas \ref{lem:norm-principle-M-N-I} and \ref{lem:norm-principle-M-N-II}.
		\end{proof}

\section{Arithmetic Springer theorem}\label{sec:AST}

For Theorems \ref{thm:AST-genus}, \ref{thm:AST-spn} and \ref{thm:AST}, some certain cases have already been proved in \cite{earnest-hisa_spinorgeneraI-1977,earnest-hisa_spinorgeneraII-1978,xu_springer_1999}, but for completeness, we will provide a unified proof for these theorems, particularly for Theorem \ref{thm:AST-spn}, by slightly modifying the arguments presented therein.
		
Let $E/F$ be a finite extension of algebraic number fields.  Write $J_{F}$ (resp. $J_{E}$) for the id\`{e}le group of $F$ (resp. $E$). Let $L$ and $N$ be $\mathcal{O}_{F}$-lattices with $\rank L=\ell\ge \rank N$, and $V=FL$. For convenience, we formally put $H_{\mathbb{A}}=O_{\mathbb{A}}^{+}(L)$ or $X_{\mathbb{A}}(L/N)$, and $H_{\mathfrak{p}}=O^{+}(L_{\mathfrak{p}})$ or $X(L_{\mathfrak{p}}/N_{\mathfrak{p}})$ naturally for $\mathfrak{p}\in  \Omega_{F}\backslash \infty_{F}$ (recall \eqref{local-XMN}). 

Put $\widetilde{L}=L\otimes_{\mathcal{O}_{F}} \mathcal{O}_{E}$ and $\widetilde{N}=N\otimes_{\mathcal{O}_{F}} \mathcal{O}_{E}$. Then $\widetilde{H}_{\mathbb{A}}=O_{\mathbb{A}}^{+}(\widetilde{L})$ or $X_{\mathbb{A}}(\widetilde{L}/\widetilde{N})$, and $\widetilde{H}_{\mathfrak{p}}=O^{+}(\widetilde{L}_{\mathfrak{p}})$ or $X(\widetilde{L}_{\mathfrak{p}}/\widetilde{N}_{\mathfrak{p}})$.
	
\begin{thm}\label{thm:AST-genus}
	Suppose that $[E:F]$ is odd.
		\begin{enumerate}[itemindent=-0.5em,label=\rm (\roman*)]
			 \item  If $\gen(\widetilde{L})=\gen(\widetilde{N})$, then $\gen(L)=\gen(N) $.
			
			\item   If $\gen(\widetilde{L})$ represents $\widetilde{N}$, then $\gen(L)$ represents $N$.
		\end{enumerate} 
\end{thm}
\begin{proof}
Since $[E:F]$ is odd, by \cite[15:3]{omeara_quadratic_1963}, $[E_{\mathfrak{P}}:F_{\mathfrak{p}}]$ is odd for some pair primes $\mathfrak{P}|\mathfrak{p}$. Hence these two assertions follow from Theorem \ref{thm:springer-thm-dyadic} and Remark \ref{re:springer-thm-non-dyadic}.
\end{proof}
			\begin{thm}\label{thm:AST-spn}
			Suppose that $[E:F]$ is odd.
			\begin{enumerate}[itemindent=-0.5em,label=\rm (\roman*)]
				\item  If $\spn^{+}(\widetilde{L})= \spn^{+}(\widetilde{N}) $, then $\spn^{+}(L)=\spn^{+}(N) $.
				
				\item If $\spn^{+}(\widetilde{L})$ represents $\widetilde{N}$, then $\spn^{+}(L)$ represents $N$.				
			\end{enumerate} 			
		\end{thm}
		 \begin{re}\label{re:xu-l2}
			 Theorems \ref{thm:AST-genus}(ii) and \ref{thm:AST-spn}(ii) for $  \ell\le 2$ have been stated by Xu (see \cite[Remark 6.3]{xu_springer_1999}).
		\end{re}
		We need some lemmas to treat remaining cases for Theorem \ref{thm:AST-spn}.
		
		\begin{lem}\label{lem:psi-inj}
		 Suppose that $\ell \ge 2$ and one of the following conditions holds:
			\begin{enumerate}[itemindent=-0.5em,label=\rm (\roman*)]
				 \item   $H_{\mathbb{A}}=O_{\mathbb{A}}^{+}(L)$;
				\item   $H_{\mathbb{A}}=X_{\mathbb{A}}(L/N)$.
			\end{enumerate} 
			Then the following diagram   
				 \begin{align}\label{comm-lifting}
				\begin{CD}
				  O_{\mathbb{A}}(\widetilde{V})/\widetilde{H}_{\mathbb{A}}O^{+}(\widetilde{V})O_{\mathbb{A}}^{'}(\widetilde{V})    @>\theta_{\widetilde{H}}>>   J_{E}/\theta(O^{+}(\widetilde{V}))\theta(\widetilde{H}_{\mathbb{A}})     \\ 
					@A\varphi AA     @AA\psi A\\
					O_{\mathbb{A}}(V)/H_{\mathbb{A}}O^{+}(V)O_{\mathbb{A}}^{'}(V)    	    @>>\theta_{H}>       J_{F}/\theta(O^{+}(V))\theta(H_{\mathbb{A}}) \\ 
				\end{CD}
			\end{align}
			is commutative. Also, both maps $\theta_{H}$ and $\theta_{\widetilde{H}}$ are injective.
			
			Thus, if $\psi$ is injective, then $\varphi $ is injective.
		\end{lem} 
		\begin{proof}
			See \cite[\S1]{earnest-hisa_spinorgeneraI-1977} for $H_{\mathbb{A}}=O_{\mathbb{A}}^{+}(L)$ and \cite[\S 1]{xu_springer_1999} for $H_{\mathbb{A}}=X_{\mathbb{A}}(L/N)$.
		\end{proof}
			\begin{lem}\label{lem:spn-equiv}
			Let $\sigma_{\mathbb{A}}\in O_{\mathbb{A}}(V)$. 
			\begin{enumerate}[itemindent=-0.5em,label=\rm (\roman*)]
				\item If $N=\sigma_{\mathbb{A}}(L)$, then $ \spn^{+}(L)=\spn^{+}(N)$ if and only if $\sigma_{\mathbb{A}}$ is trivial in the quotient group
			 $	O_{\mathbb{A}}(V)/O_{\mathbb{A}}^{+}(L)O^{+}(V)O_{\mathbb{A}}^{'}(V)  $.
				
				\item  If $N\subseteq\sigma_{\mathbb{A}}(L)$, then $ \spn^{+}(L)$ represents $N$  if and only if $\sigma_{\mathbb{A}}$ is trivial in the quotient group $	O_{\mathbb{A}}(V)/X_{\mathbb{A}}(L/N)O^{+}(V)O_{\mathbb{A}}^{'}(V)  $.
			\end{enumerate} 
			Also, both of the equivalent conditions are true for the lifting lattices $\widetilde{L}$ and $\widetilde{N}$.
		\end{lem}
		\begin{proof}
		See \cite[102:7]{omeara_quadratic_1963} and \cite[\S 1]{xu_springer_1999}.
		\end{proof}

   	\begin{lem}\label{lem:norm-principle-Nc-jp}
  Let $\mathfrak{p}\in \Omega_{F}\backslash \infty_{F}$, $c\in E^{\times}$ and $j_{\mathfrak{p}}\in F_{\mathfrak{p}}^{\times}$. Suppose that $[E:F]$ is odd. If $cj_{\mathfrak{p}}\in \theta(\widetilde{H}_{\mathfrak{P}})$ for each $\mathfrak{P}|\mathfrak{p}$, then $N_{E/F}(c)j_{\mathfrak{p}} \in \theta (H_{\mathfrak{p}}) $.
   \end{lem}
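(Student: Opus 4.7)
The plan is to apply the local norm principles of Theorems~\ref{thm:norm-principle-M} and~\ref{thm:norm-principle-M-N} (together with Remark~\ref{re:norm-principle-non-dyadic} if $\mathfrak{p}$ is non-dyadic) prime by prime above $\mathfrak{p}$, and then to assemble the local contributions via the product formula $N_{E/F}=\prod_{\mathfrak{P}\mid\mathfrak{p}}N_{\mathfrak{P}\mid\mathfrak{p}}$ together with the identity $\sum_{\mathfrak{P}\mid\mathfrak{p}}[E_{\mathfrak{P}}:F_{\mathfrak{p}}]=[E:F]$. The oddness hypothesis on $[E:F]$ enters only at the very last step, where it lets me collapse $j_{\mathfrak{p}}^{[E:F]}$ to $j_{\mathfrak{p}}$ modulo $F_{\mathfrak{p}}^{\times 2}$.

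First I fix $\mathfrak{P}\mid\mathfrak{p}$. Since $cj_{\mathfrak{p}}\in\theta(\tilde{H}_{\mathfrak{P}})$ and the local norm principle gives $N_{\mathfrak{P}\mid\mathfrak{p}}(\theta(\tilde{H}_{\mathfrak{P}}))\subseteq\theta(H_{\mathfrak{p}})$ in both cases $H=O^{+}(L)$ and $H=X(L/N)$, multiplicativity of $N_{\mathfrak{P}\mid\mathfrak{p}}$ together with the identity $N_{\mathfrak{P}\mid\mathfrak{p}}(j_{\mathfrak{p}})=j_{\mathfrak{p}}^{[E_{\mathfrak{P}}:F_{\mathfrak{p}}]}$ for $j_{\mathfrak{p}}\in F_{\mathfrak{p}}^{\times}$ yields
\[
N_{\mathfrak{P}\mid\mathfrak{p}}(c)\cdot j_{\mathfrak{p}}^{[E_{\mathfrak{P}}:F_{\mathfrak{p}}]}\;=\;N_{\mathfrak{P}\mid\mathfrak{p}}(cj_{\mathfrak{p}})\;\in\;\theta(H_{\mathfrak{p}}).
\]

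Next I multiply these inclusions over all $\mathfrak{P}\mid\mathfrak{p}$, using that $\theta(H_{\mathfrak{p}})$ is a subgroup of $F_{\mathfrak{p}}^{\times}/F_{\mathfrak{p}}^{\times 2}$. This is immediate for $H=O^{+}(L)$, and for $H=X(L/N)$ it follows inductively from the reduction formulas of Lemma~\ref{lem:reduction-formula}, where $\theta(M/N)$ is either all of $F_{\mathfrak{p}}^{\times}$ or expressed as a product of pieces that are subgroups of the abelian group $F_{\mathfrak{p}}^{\times}/F_{\mathfrak{p}}^{\times 2}$. Therefore,
\[
N_{E/F}(c)\cdot j_{\mathfrak{p}}^{[E:F]}\;=\;\prod_{\mathfrak{P}\mid\mathfrak{p}}\bigl(N_{\mathfrak{P}\mid\mathfrak{p}}(c)\, j_{\mathfrak{p}}^{[E_{\mathfrak{P}}:F_{\mathfrak{p}}]}\bigr)\;\in\;\theta(H_{\mathfrak{p}}).
\]
Since $[E:F]$ is odd, $j_{\mathfrak{p}}^{[E:F]}\equiv j_{\mathfrak{p}}\pmod{F_{\mathfrak{p}}^{\times 2}}$, so the above inclusion becomes $N_{E/F}(c)\,j_{\mathfrak{p}}\in\theta(H_{\mathfrak{p}})$, which is the conclusion.

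The main point to verify carefully is the applicability of the local norm principle uniformly in both flavours of $H$, and the fact that $\theta(H_{\mathfrak{p}})$ is indeed a subgroup of $F_{\mathfrak{p}}^{\times}/F_{\mathfrak{p}}^{\times 2}$ so that multiplying the local inclusions is legitimate; once these are in place, the remaining ingredients are the purely formal product $\sum_{\mathfrak{P}\mid\mathfrak{p}}[E_{\mathfrak{P}}:F_{\mathfrak{p}}]=[E:F]$ and the parity collapse $j_{\mathfrak{p}}^{[E:F]}\equiv j_{\mathfrak{p}}\pmod{F_{\mathfrak{p}}^{\times 2}}$.
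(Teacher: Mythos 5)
Your proof is correct and follows essentially the same route as the paper: apply the local norm principles (Theorems \ref{thm:norm-principle-M} and \ref{thm:norm-principle-M-N}, or Remark \ref{re:norm-principle-non-dyadic} in the non-dyadic case) at each $\mathfrak{P}\mid\mathfrak{p}$, multiply, and use $\sum_{\mathfrak{P}\mid\mathfrak{p}}[E_{\mathfrak{P}}:F_{\mathfrak{p}}]=[E:F]$ odd to reduce $j_{\mathfrak{p}}^{[E:F]}$ to $j_{\mathfrak{p}}$ modulo squares. The paper phrases the last step by splitting the product over $\mathfrak{P}$ according to the parity of the local degrees, but this is the same computation.
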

   \begin{proof}
   By \cite[15:3]{omeara_quadratic_1963}, $[E:F]=\sum_{\pmid}n(\mathfrak{P}|\mathfrak{p})$. Since $[E:F]$ is odd, in $F_{\mathfrak{p}}^{\times 2}$, we see that
   	\begin{align*}
   	N_{E/F}(c)j_{\mathfrak{p}}=j_{\mathfrak{p}}^{[E:F]}\prod_{\pmid}N_{\pmid}(c)=\prod_{\pmid}j_{\mathfrak{p}}^{n(\pmid)}\prod_{\pmid}N_{\pmid}(c)   =\prod_{\pmid}N_{\pmid}(cj_{\mathfrak{p}}).
   	\end{align*} 
    Since $cj_{\mathfrak{p}}\in \theta(\widetilde{H}_{\mathfrak{P}})$ for each $\mathfrak{P}|\mathfrak{p}$, if $\mathfrak{p}$ is non-dyadic, then, by Remark \ref{re:norm-principle-non-dyadic}, $N_{\pmid}(cj_{\mathfrak{p}})\in \theta(H_{\mathfrak{p}})$; if $\mathfrak{p}$ is dyadic, then, by Theorems \ref{thm:norm-principle-M} and \ref{thm:norm-principle-M-N}, $N_{\pmid}(cj_{\mathfrak{p}})\in \theta(H_{\mathfrak{p}})$. So $N_{E/F}(c)j_{\mathfrak{p}}\in \theta(H_{\mathfrak{p}})$.
   \end{proof}

	\begin{proof}[Proof of Theorem \ref{thm:AST-spn}]
          For (i), let $\ell\ge 2$. If $\spn^{+}(\widetilde{L})=\spn^{+}(\widetilde{N})$, then $\gen(\widetilde{L})=\gen(\widetilde{N})$. By Theorem \ref{thm:AST-genus}(i), $\gen(L)=\gen(N)$, so $N=\sigma_{\mathbb{A}}(L)$ for some $\sigma_{\mathbb{A}}\in O_{\mathbb{A}}(V)$. Hence, by Lemma \ref{lem:spn-equiv}(i),  $\widetilde{N}=\widetilde{\sigma}_{\mathbb{A}}(\widetilde{L})$ and $\widetilde{\sigma}$ is trivial. From diagram \eqref{comm-lifting}, if $\varphi$ is injective, then so is $\sigma_{\mathbb{A}}$ trivial, by Lemma \ref{lem:spn-equiv}(i), which is equivalent to $\spn^{+}(L)=\spn^{+}(N)$, as desired. 
          
          A similar argument can be applied to (ii) by Theorem \ref{thm:AST-genus}(ii) and Lemma \ref{lem:spn-equiv}(ii) instead. So, it remains to show that $\varphi$ is injective for $H_{\mathbb{A}}=O_{\mathbb{A}}^{+}(L)$ and $X_{\mathbb{A}}(L/N)$. 
         
        To do so, by Lemma \ref{lem:psi-inj}, it suffices to show that $\psi$ is injective.
		 Let $\bar{j_{\mathbb{A}}}\in \Ker\, \psi$ with $j_{\mathbb{A}}\in J_{F}$. Then there exists $c \in \theta(O^{+}(\widetilde{V}))$ such that $cj_{\mathbb{A}}\in \theta(H_{\mathbb{A}})$. Hence for each $\mathfrak{p}\in \Omega_{F}\backslash \infty_{F}$, we have $cj_{\mathfrak{p}}\in \theta(\widetilde{H}_{\mathfrak{P}})$ for all $\mathfrak{P}|\mathfrak{p}$. So, by Lemma \ref{lem:norm-principle-Nc-jp}, $N_{E/F}(c)j_{\mathfrak{p}}\in \theta(H_{\mathfrak{p}})$. Hence $N_{E/F}(c)j_{\mathbb{A}}\in \theta(H_{\mathbb{A}})$. Since $c\in \theta(O^{+}(\widetilde{V}))$,
	\cite[Lemma 2.2]{earnest-hisa_spinorgeneraI-1977} implies $N_{E/F}(c)\in \theta(O^{+}(V))$. So $j_{\mathbb{A}}\in \theta(O^{+}(V))\theta(H_{\mathbb{A}})$, as desired.
	\end{proof}

\begin{proof}[Proof of Theorem \ref{thm:AST}]
	Let $\ell\ge 3$. Since $L$ is indefinite, by \cite[104:5]{omeara_quadratic_1963}, $\cls^{+}(L)=\spn^{+}(L)$. This is also true for $\widetilde{L}$.
	
	(i) If $\widetilde{L}\cong \widetilde{N}$, then $\spn^{+}(\widetilde{L})=\spn^{+}(\widetilde{N})$ and $\rank N=\ell\ge 3$. By Theorem \ref{thm:AST-spn}, $\spn^{+}(L)=\spn^{+}(N)$. Hence, by \cite[104:5]{omeara_quadratic_1963}, $L\cong N$.

	(ii) If $ \widetilde{L} $ represents $\widetilde{N}$, then  $ \spn^{+}(\widetilde{L}) $ represents $\widetilde{N}$. By Theorem \ref{thm:AST-spn}, $\spn^{+}(L)$ represents $N$, so does $L$.
\end{proof}

	\begin{ex}\label{ex:binary}
		Let $F=\mathbb{Q}(\sqrt{-47})$. Then its discriminant is $d(F)=-47$ and Minkowski bound is $M_{F}<5$. A standard argument shows that 
		\begin{align*}
			2\mathcal{O}_{F}=\mathfrak{p}_{2}^{+}\mathfrak{p}_{2}^{-}\quad\text{with}\quad \mathfrak{p}_{2}^{\pm}=(2,(1\pm\sqrt{-47})/2), \\
			3\mathcal{O}_{F}=\mathfrak{p}_{3}^{+}\mathfrak{p}_{3}^{-}\quad\text{with}\quad \mathfrak{p}_{3}^{\pm}=(3,(1\pm\sqrt{-47})/2),
		\end{align*}
		and the ideal class group $\cl(F)$ of $F$ is a cyclic group of order $5$ generated by the ideal class $[\mathfrak{p}_{2}^{+}]$.
		
		Take $\mathfrak{a}=\mathfrak{p}_{2}^{+}$ and $\mathfrak{b}=\mathfrak{a}^{2}$. Then both of them are non-principal. Also, $\mathfrak{a}^{5}=a\mathcal{O}_{F}$ and $\mathfrak{b}^{5}=b\mathcal{O}_{F}$, with $a=(9+\sqrt{-47})/2 $ and $b=(17+9\sqrt{-47})/2$. Put $E=F(\sqrt[5]{a})$. Then $[E:F]=5$ is odd. For $\mathfrak{c}\in \{\mathfrak{a},\mathfrak{b}\}$, construct the binary $\mathcal{O}_{F}$-lattice $L(\mathfrak{c})$ as follows,
		\begin{align*}
			L (\mathfrak{c}) =\mathfrak{c}x+\mathfrak{c}^{-1}y\quad\text{with}\quad Q(x)=Q(y)=0\quad\text{and}\quad B(x,y)=1/2.
		\end{align*}
		Then $\widetilde{L(\mathfrak{c})}=L(\mathfrak{c})\otimes_{\mathcal{O}_{F}}\mathcal{O}_{E}$.
		
		We first show the following two assertions.
		
		(i)  $L(\mathfrak{c})$ is $1$-universal for each $\mathfrak{q}\in \Omega_{F}$; $\widetilde{L(\mathfrak{c})}$ is $1$-universal for each $\mathfrak{q}\in \Omega_{E}$.
		
		(ii)  $L(\mathfrak{c})$ is not $1$-universal over $F$, but $1$-universal over $E$. 
		
		For (i), let $\mathfrak{q}\in \Omega_{F}$. By \cite[Theorem 2.3]{hhx_indefinite_2021}, the binary quadratic space $F_{\mathfrak{q}}L(\mathfrak{c})_{\mathfrak{q}} $ is clearly $1$-universal for complex prime $\mathfrak{q}$. By \cite[82:8]{omeara_quadratic_1963}, we have $\mathfrak{n}(L(\mathfrak{c}))=2\mathfrak{s}(L(\mathfrak{c}))=\mathcal{O}_{F}$ and so $\mathfrak{n}(L(\mathfrak{c})_{\mathfrak{q}})=2\mathfrak{s}(L(\mathfrak{c})_{\mathfrak{q}})=\mathcal{O}_{F_{\mathfrak{q}}}$. Hence $L(\mathfrak{c})_{\mathfrak{q}}\cong \mathbf{H}_{F_{\mathfrak{q}}}$ is $1$-universal  for any non-dyadic prime $\mathfrak{q} $, by \cite[Proposition 2.3]{xu_indefinite_2020} and for any dyadic prime $\mathfrak{q} $, by \cite[Corollary 2.9]{xu_indefinite_2020}. Similarly for the lifting lattice $\widetilde{L(\mathfrak{c})}$.
		
		For (ii),  by \cite[Corollary 3.6]{xu_indefinite_2020}, it is sufficient to show that $1\not\in Q(L(\mathfrak{c}))$ and $1\in Q(\widetilde{L(\mathfrak{c})})$. Assume that $1\in Q(L(\mathfrak{c}))$. Then there exist $c\in \mathfrak{c}$ and $d\in \mathfrak{c}^{-1}$ such that $1=Q(cx+dy)=cd$. 
		Hence $c\mathfrak{c}^{-1}\subseteq \mathcal{O}_{F}$ and $d\mathfrak{c}\subseteq \mathcal{O}_{F}$, so by \cite[22:7]{omeara_quadratic_1963}, $\mathfrak{c}=c\mathcal{O}_{F}$ and $\mathfrak{c}^{-1}=d\mathcal{O}_{F}$, which contradicts the definition of $\mathfrak{c}$. Thus $1\not\in Q(L(\mathfrak{c}))$. But since $\mathfrak{c}\mathcal{O}_{E}=\sqrt[5]{c}\mathcal{O}_{E}$ with $c\in \{a,b\}$, we have $1=Q(\sqrt[5]{c}x+\sqrt[5]{c}^{-1}y)\in Q(\widetilde{L(\mathfrak{c})})$, as desired.
		
		Now let $N=\langle 1\rangle$ be a unary $\mathcal{O}_{F}$-lattice. Then, by (ii), $N$ is not represented by $L(\mathfrak{c})$, but $\widetilde{N}$ is represented by $\widetilde{L(\mathfrak{c})}$. This gives a counterexample for Theorem \ref{thm:AST}(ii) when $\ell=2$.
		
		 In $\cl(F)$, $[\mathfrak{b}]\not=[\mathfrak{a}]$ and $[\mathfrak{b}]\not=[\mathfrak{a}^{-1}]$, but in $\cl(E)$, $[\mathfrak{a}]=[\mathfrak{b}]=[1]$, so we see that
		\begin{align*}
			L(\mathfrak{a})\not\cong L(\mathfrak{b})\quad\text{and}\quad \widetilde{L(\mathfrak{a})}\cong \widetilde{L(\mathfrak{b})},
		\end{align*} 
		by (i), (ii) and the one-to-one correspondence in \cite[Theorem 3.5(1) and Remark 3.7]{xu_indefinite_2020}. This gives a counterexample for Theorem \ref{thm:AST}(i) when $\ell=2$.
 		
       Note that assertion (ii) also provides a counterexample for Corollary \ref{cor:globally-n-universal-extension-E-odd} with $\rank\,L=2$.
	\end{ex}

 \section{Liftings of $n$-universality}\label{sec:lifting-universal}

Unless otherwise stated, we assume that $F$ is a non-archimedean local field and $E$ is a finite extension of $F$ at the primes $\mathfrak{P}|\mathfrak{p}$. Following the notations in Section \ref{sec:lifting}, we let $M$ be an integral $ \mathcal{O}_{F} $-lattice of rank $ m\ge n\ge 1 $. When $F$ is non-dyadic, we consider a Jordan splitting $ M=J_{0}(M)\perp J_{1}(M)\perp\cdots \perp J_{t}(M)$, same as in Section \ref{sec:lifting}. When $F$ is dyadic, we suppose that
           $  M\cong \prec a_{1},\ldots,a_{m}\succ $  relative to some good BONG, and write $ R_{i}=R_{i}(M) $ for $ 1\le i\le m $ and $ \alpha_{i}=\alpha_{i}(M) $ for $ 1\le i\le m-1 $. For $h,k\in \mathbb{Z}$, we also write $[h,k]^{E}$ (resp. $[h,k]^{O}$) for the set of all even (resp. odd) integers $i$ with $h\le i\le k$.

           First, let us recall the necessary and sufficient conditions for a local lattice $M$ to be $n$-universal, which were obtained in \cite[Proposition 2.3]{xu_indefinite_2020}, \cite[Propositions 3.3 and 3.4]{hhx_indefinite_2021}, \cite[Theorem 2.1]{beli_universal_2020} and \cite[Theorems 4.7 and 5.1]{HeHu2}.

           	\begin{thm}\label{thm:n-universal-nondyadic}
           	If $F$ is non-dyadic, then $ M $ is $n$-universal over $F$ if and only if one of the following conditions holds:
           	
           	\begin{enumerate}[itemindent=-0.5em,label=\rm (\roman*)]
           		\item  $n=1$ and one of conditions (a)-(c) holds:
           		\begin{enumerate}[itemindent=-0.5em,label=\rm (\roman*)]
           			\item[(a)] $J_{0}(M)\cong \mathbf{H}_{F}$;
           			
           			\item[(b)] $J_{0}(M)\cong \mathbf{A}_{F}$ and $ \rank  J_{1}(M) \ge 2 $;
           			
           			\item[(c)] $\rank J_{0}(M)\ge 3$.
           		\end{enumerate}
           		\item $n=2$ and one of conditions (a)-(c) holds:
           		
           		\begin{enumerate}[itemindent=-0.5em,label=\rm (\roman*)]
           			\item[(a)]  $\rank J_{0}(M)=3$ and $\rank J_{1}(M)\ge 2$;
           			
           			\item[(b)] $J_{0}(M)\cong \mathbf{H}_{F}\perp \mathbf{H}_{F}$;
           			
           			\item[(c)]  $J_{0}(M)\cong \mathbf{H}_{F}\perp\mathbf{A}_{F}$ and $ \rank  J_{1}(M) \ge 1 $;
           			
           			\item[(d)] $\rank J_{0}(M)\ge 5$.
           		\end{enumerate}
           		
           		\item $n\ge 3$ and one of conditions (a)-(c) holds:
           		
           		\begin{enumerate}[itemindent=-0.5em,label=\rm (\roman*)]
           			\item[(a)] $\rank J_{0}(M)=n+1$ and $\rank J_{1}(M)\ge 2$;
           			
           			\item[(b)]   $\rank J_{0}(M)=n+2$ and $\rank J_{1}(M)\ge 1$;
           			
           			\item[(c)]  $\rank J_{0}(M)\ge n+3$.
           		\end{enumerate}		 
           	\end{enumerate} 
           \end{thm}
           
             \begin{thm}\label{thm:1-universaldyadic}
           		If $F$ is dyadic, then $ M $ is $1$-universal over $F$ if and only if $m\ge 2$, $R_{1}=0$ and one of the following conditions holds:
           	\begin{enumerate}[itemindent=-0.5em,label=\rm (\roman*)]
           		\item $\alpha_{1}=0$, i.e., $R_{2}=-2e_{\mathfrak{p}}$ and  conditions (a) and (b) hold:
           		
           		\begin{enumerate}[itemindent=-0.5em,label=\rm (\roman*)]
           			\item[(a)] if $m=2$ or $R_{3}>1$, then $[a_{1},a_{2}]\cong \mathbb{H}_{F}$;
           			
           			\item[(b)] if $m\ge 3$, $R_{3}=1$ and either $m=3$ or $R_{4}>2e_{\mathfrak{p}}+1$, then $[a_{1},a_{2}]\cong \mathbb{H}_{F}$.
           		\end{enumerate} 
           		\item $m\ge 3$, $\alpha_{1}=1$ and conditions (a) and (b) hold:
           		\begin{enumerate}[itemindent=-0.5em,label=\rm (\roman*)]

           			\item[(a)]  if $R_{2}=1$ or $R_{3}>1$, then $m\ge 4$ and $\alpha_{3}\le 2(e_{\mathfrak{p}}-\lfloor(R_{3}-R_{2})/2\rfloor)-1$;
           			
           			\item[(b)]  if $R_{2}\le 0$, $R_{3}\le 1$ and either $m=3$ or $R_{4}-R_{3}>2e_{\mathfrak{p}}$, then  $[a_{1},a_{2},a_{3}]$ is isotropic.
           		\end{enumerate} 
           	\end{enumerate} 
           	
           \end{thm}
           \begin{thm}\label{thm:even-n-universaldyadic} 
           		If $F$ is dyadic and $n\ge 2$ is even, then $ M $ is $n$-universal over $F$ if and only if $ m\ge n+3 $ or $m=n+2=4 $, and the following conditions hold:
           	\begin{enumerate}[itemindent=-0.5em,label=\rm (\roman*)]
           		\item $ R_{i}=0 $ for  $ i\in [1,n+1]^{O} $ and $ R_{i}=-2e_{\mathfrak{p}} $ for $ i\in [1,n]^{E} $.
           		
           		\item   If $ m=n+2=4 $, then $ M\cong \mathbf{H}_{F}\perp \mathbf{H}_{F} $.
           		
           		\item If $ m\ge n+3 $, then conditions (a)-(c) hold:
           		\begin{enumerate}
           			\item[(a)] $ \alpha_{n+1}=0$ or $\alpha_{n+1}=1$.
           			
           			\item[(b)]  If $ R_{n+3}-R_{n+2}>2e_{\mathfrak{p}}  $, then $ R_{n+2}=-2e_{\mathfrak{p}} $; and if moreover either $n\ge 4$, or $n=2$ and $ d(a_{1,4})=2e_{\mathfrak{p}} $, then $ R_{n+3}=1 $.
           			
           			\item[(c)]   If $ R_{n+3}-R_{n+2}=2e_{\mathfrak{p}} $ and $ R_{n+2}=2-2e_{\mathfrak{p}} $, then $ d(-a_{n+1}a_{n+2})=2e_{\mathfrak{p}}-1 $.
           		\end{enumerate}
           		
           	\end{enumerate} 
           	
           \end{thm}

           \begin{thm}\label{thm:odd-n-universaldyadic} 
           	If $F$ is dyadic and $n\ge 3$ is odd, then  $ M $ is $n$-universal over $F$ if and only if $ m\ge n+3 $ and the following conditions hold:
           	\begin{enumerate}[itemindent=-0.5em,label=\rm (\roman*)]
           		\item  $ R_{i}=0 $ for $ i\in [1,n]^{O} $, $ R_{i}=-2e_{\mathfrak{p}} $ for $ i\in [1,n]^{E} $, and  $ \alpha_{n}=0 $ or $ \alpha_{n}=1 $.

           		\item    If $ \alpha_{n}=0 $, then $ R_{n+2}=0$ or $R_{n+2}=1$.
           		
           		\item[] If $ \alpha_{n}=1 $ and either $ R_{n+1}=1 $ or $ R_{n+2}>1 $, then 
           		\begin{align*}
           			\alpha_{n+2}\le 2(e_{\mathfrak{p}}-\lfloor(R_{n+2}-R_{n+1})/2\rfloor)-1.
           		\end{align*} 
           		
           		\item  $ R_{n+3}-R_{n+2}\le 2e_{\mathfrak{p}} $. 
           	\end{enumerate} 
           \end{thm}	
           
         We will show the following theorems by applying these results to $\widetilde{M}$ and analyzing invariants under extensions in Section \ref{sec:lifting}.

%
%
%
%
%
%
%
%
%
%
%

           	\begin{thm}\label{thm:n-universal-nondyadic-overE}
      If $F$ is non-dyadic, then $ M $ is $n$-universal over $E$ if and only if one of the following conditions holds:
           	\begin{enumerate}[itemindent=-0.5em,label=\rm (\roman*)]
           		\item  $n=1$ and one of conditions (a)-(c) holds:
           		\begin{enumerate}[itemindent=-0.5em,label=\rm (\roman*)]
           			\item[(a)] $J_{0}(M)\cong \mathbf{H}_{F}$, or $J_{0}(M)\cong \mathbf{A}_{F}$ and $f_{\pmid}$ is even;
           			
           			\item[(b)] $J_{0}(M)\cong \mathbf{A}_{F}$, $ \rank  J_{1}(M) \ge 2 $, $e_{\pmid}=1$ and $f_{\pmid}$ is odd;
           			
           			\item[(c)] $\rank J_{0}(M)\ge 3$.
           		\end{enumerate}
           		\item  $n=2$ and one of conditions (a)-(c) holds:
           		
           		\begin{enumerate}[itemindent=-0.5em,label=\rm (\roman*)]
           			\item[(a)] $\rank J_{0}(M)=3$, $\rank J_{1}(M)\ge 2$ and $e_{\pmid}=1$;
           			
           			\item[(b)] $J_{0}(M)\cong \mathbf{H}_{F}\perp \mathbf{H}_{F}$, or $J_{0}(M)\cong\mathbf{H}_{F}\perp \mathbf{A}_{F}$ and $f_{\pmid}$ is even;
           			
           			\item[(c)]  $J_{0}(M)\cong \mathbf{H}_{F}\perp\mathbf{A}_{F}$, $ \rank  J_{1}(M) \ge 1 $, $e_{\pmid}=1$ and $f_{\pmid}$ is odd;
           			
           			\item[(d)] $\rank J_{0}(M)\ge 5$.
           		\end{enumerate}
           		
           		\item  $n\ge 3$ and one of conditions (a)-(c) holds:
           		
           		\begin{enumerate}[itemindent=-0.5em,label=\rm (\roman*)]
           			\item[(a)] $\rank J_{0}(M)=n+1$, $\rank J_{1}(M)\ge 2$ and  $e_{\pmid}=1$;
           			
           			\item[(b)]  $\rank J_{0}(M)=n+2$, $\rank J_{1}(M)\ge 1$ and  $e_{\pmid}=1$; 
           			
           			\item[(c)]  $\rank J_{0}(M)\ge n+3$.
           		\end{enumerate}		 
           	\end{enumerate} 
           	
           \end{thm}

	 \begin{thm}\label{thm:1-universaldyadic-overE}
	 If $F$ is dyadic, then $ M $ is $1$-universal over $E$ if and only if $m\ge 2$, $R_{1}=0$ and one of the following conditions holds:
	 	\begin{enumerate}[itemindent=-0.5em,label=\rm (\roman*)]
	 		\item $\alpha_{1}=0$, i.e., $R_{2}=-2e_{\mathfrak{p}}$ and  conditions (a) and (b) hold:
	 		
	 		\begin{enumerate}[itemindent=-0.5em,label=\rm (\roman*)]
	 			\item[(a)] if $m=2$ or $R_{3}>1$ or $R_{3}=1$ and $e_{\pmid}>1$, then $[a_{1},a_{2}]\cong \mathbb{H}_{F}$, or $[a_{1},a_{2}]\cong \mathbb{A}_{F}$ and $f_{\pmid}$ is even;
	 			
	 			\item[(b)] if $m\ge 3$, $R_{3}=e_{\pmid}=1$ and either $m=3$ or $R_{4}>2e_{\mathfrak{p}}+1$, then $[a_{1},a_{2}]\cong \mathbb{H}_{F}$, or $[a_{1},a_{2}]\cong \mathbb{A}_{F}$ and $f_{\pmid}$ is even.
	 		\end{enumerate} 
	 		\item $m\ge 3$, $\alpha_{1}=e_{\pmid}=1$ and conditions (a) and (b) hold:
	 		\begin{enumerate}[itemindent=-0.5em,label=\rm (\roman*)]

	 			\item[(a)]  if $R_{2}=1$ or $R_{3}>1$, then $m\ge 4$ and $\alpha_{3}\le 2(e_{\mathfrak{p}}-\lfloor(R_{3}-R_{2})/2\rfloor)-1$;
	 			
	 			\item[(b)]  if $f_{\pmid}$ is odd, $R_{2}\le 0$, $R_{3}\le 1$ and either $m=3$ or $R_{4}-R_{3}>2e_{\mathfrak{p}}$, then  $[a_{1},a_{2},a_{3}]$ is isotropic over $F$.
	 		\end{enumerate} 
	 	\end{enumerate} 
	 	
	 \end{thm}

	 \begin{thm}\label{thm:even-n-universaldyadic-overE}
	 	If $F$ is dyadic and $n\ge 2$ is even, then $ M $ is $n$-universal over $E$ if and only if either $m\ge n+3$ or $m=n+2=4$, and the following conditions hold:
	 	\begin{enumerate}[itemindent=-0.5em,label=\rm (\roman*)]
	 		\item  $ R_{i}=0 $ for  $ i\in [1,n+1]^{O} $ and $ R_{i}=-2e_{\mathfrak{p}} $ for $ i\in [1,n]^{E} $.
	 		
	 		\item   If $ m=n+2=4 $, then $M\cong \mathbf{H}_{F}\perp \mathbf{H}_{F}$, or $M\cong \mathbf{H}_{F}\perp \mathbf{A}_{F}$ and $f_{\pmid}$ is even.
	 		
	 		\item If $ m\ge n+3 $, then the conditions (a)-(c) hold:
	 		
	 		\begin{enumerate}[itemindent=-0.5em,label=\rm (\roman*)]
	 			\item[(a)] $ \alpha_{n+1}=0$ or $\alpha_{n+1}=e_{\pmid}=1$.
	 			
	 			\item[(b)]  If $ R_{n+3}-R_{n+2}>2e_{\mathfrak{p}}  $, then $ R_{n+2}=-2e_{\mathfrak{p}} $; and if moreover either $n\ge 4$, or $n=2$, $d(a_{1,4})=2e_{\mathfrak{p}} $ and $f_{\pmid}$ is odd, then $ R_{n+3}=e_{\pmid}=1$.
	 			
	 			\item[(c)]  If $ R_{n+3}-R_{n+2}=2e_{\mathfrak{p}} $ and $ R_{n+2}=2-2e_{\mathfrak{p}}  $, then $ d(-a_{n+1}a_{n+2})=2e_{\mathfrak{p}}-1 $.
	 		\end{enumerate}
	 		
	 	\end{enumerate}
	 \end{thm}
	 
	 		 	\begin{thm}\label{thm:odd-n-universaldyadic-overE}
	 		If $F$ is dyadic and $n\ge 3$ is odd, then $ M $ is $n$-universal over $E$ if and only if $ m\ge n+3 $ and the following conditions hold:
	 	\begin{enumerate}[itemindent=-0.5em,label=\rm (\roman*)]
	 		\item   $ R_{i}=0 $ for $ i\in [1,n]^{O} $, $ R_{i}=-2e_{\mathfrak{p}} $ for $ i\in [1,n]^{E} $, and $ \alpha_{n}=0 $ or $ \alpha_{n}=e_{\pmid}=1$.

	 		\item    If $ \alpha_{n}=0 $, then $ R_{n+2}=0$ or $R_{n+2}=e_{\pmid}=1$.
	 		
	 		\item[] If $ \alpha_{n}=1 $ and either $ R_{n+1}=1 $ or $ R_{n+2}>1 $, then 
	 		\begin{align*}
	 			\alpha_{n+2}\le 2(e_{\mathfrak{p}}-\lfloor(R_{n+2}-R_{n+1})/2\rfloor)-1.
	 		\end{align*}

	 		\item   $ R_{n+3}-R_{n+2}\le 2e_{\mathfrak{p}} $.  
	 	\end{enumerate} 	 	
	 \end{thm}
 
 Comparing Theorems \ref{thm:n-universal-nondyadic},  \ref{thm:1-universaldyadic}, \ref{thm:even-n-universaldyadic} and \ref{thm:odd-n-universaldyadic} with Theorems \ref{thm:n-universal-nondyadic-overE}, \ref{thm:1-universaldyadic-overE}, \ref{thm:even-n-universaldyadic-overE} and \ref{thm:odd-n-universaldyadic-overE}, respectively, one can easily show the corollaries below.

	 \begin{cor}\label{cor:non-dyadic-n-universal-overE-implies-overF}
	 	If $F$ is non-dyadic and $M$ is $n$-universal over $E$, then $M$ is $n$-universal over $F$ except that $f_{\pmid}$ is even and one of the following cases happens:
	 	\begin{enumerate}[itemindent=-0.5em,label=\rm (\roman*)]
	 		\item $n=1$ and $M\cong \mathbf{A}_{F}$;
	 		
	 		\item $n=2$ and $M\cong \mathbf{H}_{F}\perp\mathbf{A}_{F}$.
	 		
	 	\end{enumerate}
	 	\item Thus, if $f_{\pmid}$ is odd or $n\ge 3$ or $m\ge n+3=5$ or $m\ge n+2=3$, then $M$ is $n$-universal over $F$. 
	 \end{cor}

	 \begin{cor}\label{cor:dyadic-n-universal-overE-implies-overF}
	 If $F$ is dyadic and $M$ is $n$-universal over $E$, then $M$ is $n$-universal over $F$ except that $f_{\pmid}$ is even and one of the following cases happens:
	 	\begin{enumerate}[itemindent=-0.5em,label=\rm (\roman*)]
	 		\item  $n=1$ and $[a_{1},a_{2}]\cong \mathbb{A}_{F}$;
	 		
	 		\item  $n=2$ and $M\cong \mathbf{H}_{F}\perp\mathbf{A}_{F}$.
	 		
	 	\end{enumerate}
	 	Thus, if $f_{\pmid}$ is odd or $n\ge 3$ or $m\ge n+3=5$, then $M$ is $n$-universal over $F$.
	 	
	 \end{cor}
	 
	 To prove the theorems above, we divide into non-dyadic and dyadic cases naturally.

		\medskip
		\noindent\textbf{Case I: $F$ is non-dyadic.}

		\begin{lem}\label{lem:n-universal-nondyadic-overE}
				An $\mathcal{O}_{F}$-lattice $ M $ is $n$-universal over $E$ if and only if one of the following conditions holds:
				\begin{enumerate}[itemindent=-0.5em,label=\rm (\roman*)]
				\item   $n=1$ and one of conditions (a)-(c) holds:
				\begin{enumerate}[itemindent=-0.5em,label=\rm (\roman*)]
					\item[(a)] $J_{0}(\widetilde{M})\cong \mathbf{H}_{E}$;
					
					\item[(b)] $J_{0}(\widetilde{M})\cong \mathbf{A}_{E}$, $ \rank  J_{1}(M) \ge 2 $ and $e_{\pmid}=1$;
					
					\item[(c)] $\rank J_{0}(M)\ge 3$.
				\end{enumerate}
				\item  $n=2$ and one of conditions (a)-(c) holds:
				
				\begin{enumerate}[itemindent=-0.5em,label=\rm (\roman*)]
					\item[(a)] $\rank J_{0}(M)=3$, $\rank J_{1}(M)\ge 2$ and $e_{\pmid}=1$;
					
					\item[(b)] $J_{0}(\widetilde{M})\cong \mathbf{H}_{E}\perp \mathbf{H}_{E}$;
					
					\item[(c)]  $J_{0}(\widetilde{M})\cong \mathbf{H}_{E}\perp\mathbf{A}_{E}$, $ \rank  J_{1}(M) \ge 1 $ and $e_{\pmid}=1$;
					
					\item[(d)] $\rank J_{0}(M)\ge 5$.
				\end{enumerate}
				
				\item  $n\ge 3$ and one of conditions (a)-(c) holds:
				
				\begin{enumerate}[itemindent=-0.5em,label=\rm (\roman*)]
					\item[(a)] $\rank J_{0}(M)=n+1$, $\rank J_{1}(M)\ge 2$ and  $e_{\pmid}=1$;
					
					\item[(b)]  $\rank J_{0}(M)=n+2$, $\rank J_{1}(M)\ge 1$ and  $e_{\pmid}=1$; 
					
					\item[(c)]  $\rank J_{0}(M)\ge n+3$.
				\end{enumerate}		 
			\end{enumerate} 
		 
		\end{lem}	 
			\begin{proof}
			By Lemma \ref{lem:extension-Jordansplittings}, $\widetilde{M}$ has a Jordan splitting 
			\begin{align}\label{Jordansplitting-over-E}
				\widetilde{M}&=J_{0}(\widetilde{M})\perp J_{e_{\pmid}}(\widetilde{M})\perp \cdots \perp J_{te_{\pmid}}(\widetilde{M})=\widetilde{J_{0}(M)}\perp \widetilde{J_{e_{\pmid}}(M)}\perp \cdots \perp \widetilde{J_{te_{\pmid}}(M)},
			\end{align}
			and we have
			\begin{align}\label{rank}
				\rank J_{ie_{\pmid}}(\widetilde{M})=\rank \widetilde{J_{i}(M)}=\rank J_{i}(M),
			\end{align}	
			for $0\le i\le t$.
			
			Suppose that $M$ satisfies one of conditions (i)-(iii). Then, from \eqref{Jordansplitting-over-E}, $\widetilde{M}$ satisfies one of Theorem \ref{thm:n-universal-nondyadic}(i)-(iii) accordingly. Hence, by Theorem \ref{thm:n-universal-nondyadic}, $M$ is $n$-universal over $E$. This shows the sufficiency.
			
			Suppose that $M$ is $n$-universal over $E$. Then $\widetilde{M}$ satisfies one of conditions (i)-(iii) in Theorem \ref{thm:n-universal-nondyadic}. If $\rank J_{1}(\widetilde{M})>0$, then, by \eqref{Jordansplitting-over-E}, $e_{\pmid}=1$. The remaining conditions are clear from \eqref{rank}. This shows the necessity.
		\end{proof}
		
		 Now, we will refine the conditions on $J_{0}(\widetilde{M})$ in Lemma \ref{lem:n-universal-nondyadic-overE}. The following proposition is clear from \cite[92:1]{omeara_quadratic_1963}.
		
		\begin{prop}\label{prop:binary-quaternary-unimodular}
			Let $N$ be a unimodular $\mathcal{O}_{F}$-lattice of rank $n$. Write $dN$ for the discriminant of $N$.
			
			\begin{enumerate}[itemindent=-0.5em,label=\rm (\roman*)]
				\item  If $n=2$, then $N\cong \mathbf{H}_{F}$ or $\mathbf{A}_{F}$, according as $-dN\in \mathcal{O}_{F}^{\times 2}$ or $   \Delta_{F}\mathcal{O}_{F}^{\times 2}$.
								
				\item If  $n=4$, then $N\cong \mathbf{H}_{F}\perp \mathbf{H}_{F}$ or $\mathbf{H}_{F}\perp\mathbf{A}_{F}$, according as $dN\in \mathcal{O}_{F}^{\times 2}$ or $  \Delta_{F}\mathcal{O}_{F}^{\times 2}$. 
			\end{enumerate}
		\end{prop}
		\begin{lem}\label{lem:binary-quaternary-lifting}
			Let $N$ be a unimodular $\mathcal{O}_{F}$-lattice of rank $n$.
			\begin{enumerate}[itemindent=-0.5em,label=\rm (\roman*)]
				\item  If $n=2$, then $\widetilde{N}\cong \mathbf{H}_{E}$ if and only if $N\cong \mathbf{H}_{F}$, or $N\cong \mathbf{A}_{F}$ and $f_{\pmid}$ is even.
				
				\item  If $n=2$, then $\widetilde{N}\cong   \mathbf{A}_{E}$ if and only if $N\cong \mathbf{A}_{F}$ and $f_{\pmid}$ is odd.
				
				\item  If $n=4$, then $\widetilde{N}\cong \mathbf{H}_{E}\perp \mathbf{H}_{E}$ if and only if $N\cong \mathbf{H}_{F}\perp \mathbf{H}_{F}$, or $N\cong \mathbf{H}_{F}\perp\mathbf{A}_{F}$ and $f_{\pmid}$ is even.
				
				\item  If $n=4$, then $\widetilde{N}\cong \mathbf{H}_{E}\perp \mathbf{A}_{E}$ if and only if $N\cong \mathbf{H}_{F}\perp \mathbf{A}_{F}$ and $f_{\pmid}$ is odd.
			\end{enumerate}
		\end{lem}
		
		\begin{proof}
			
			If $n=2$, since $N$ is unimodular, $-dN\in \mathcal{O}_{F}^{\times 2}\cup  \Delta_{F}\mathcal{O}_{F}^{\times 2}$. From Remark \ref{re:defectP-p}, $-d\widetilde{N}\in \mathcal{O}_{E}^{\times 2} $ if and only if $-dN\in \mathcal{O}_{F}^{\times 2}$, or $-dN \in \Delta_{F}\mathcal{O}_{F}^{\times 2}$ and $f_{\pmid}$ is even. So  (i) and (ii) follow from Proposition \ref{prop:binary-quaternary-unimodular}(i).
			
			If $n=4$, by Lemma \ref{lem:extension-Jordansplittings}, one can reduce to the binary case. So (iii) and (iv) follow from (i), (ii) and Proposition \ref{prop:binary-quaternary-unimodular}(ii).
		\end{proof}

		\begin{proof}[Proof of Theorem \ref{thm:n-universal-nondyadic-overE}]
	By Lemma \ref{lem:extension-Jordansplittings}, $J_{0}(\widetilde{M})=\widetilde{J_{0}(M)}$ and thus, the conditions in Lemma \ref{lem:n-universal-nondyadic-overE} can be refined by Lemma \ref{lem:binary-quaternary-lifting} with $N=J_{0}(M)$. This shows the theorem.
		\end{proof}

		\medskip
		
		\noindent\textbf{Case II: $F$ is dyadic.}

		From \cite[Proposition 3.7]{HeHu2}, we have the following proposition.
			
		\begin{prop}\label{prop:binary-quaternary-dyadic}
			Let $N$  be an $\mathcal{O}_{F}$-lattice of rank $n$ and $c=(-1)^{n/2}dN$. Suppose that $R_{1}(N)=R_{2}(N)+2e_{\mathfrak{p}}=0$.
			
			\begin{enumerate}[itemindent=-0.5em,label=\rm (\roman*)]
				\item  If $n=2$, then $ N\cong \mathbf{H}_{F}$ or $\mathbf{A}_{F}$, according as $d(c)=\infty$ or $ 2e_{\mathfrak{p}}$; thus $FN\cong \mathbb{H}_{F}$ or $\mathbb{A}_{F}$, according as $d(c)=\infty$ or $ 2e_{\mathfrak{p}}$.

				\item If $n=4$ and $ R_{3}(N)=R_{4}(N)+2e_{\mathfrak{p}}=0$, then $ N\cong \mathbf{H}_{F}\perp \mathbf{H}_{F}$ or $\mathbf{H}_{F}\perp\mathbf{A}_{F}$, according as $ d(c)=\infty$ or $  2e_{\mathfrak{p}}$. 
			\end{enumerate}
		\end{prop}

			 \begin{lem}\label{lem:binary-quaternary-lifting-dyadic}
			Let $N$ be an $\mathcal{O}_{F}$-lattice of rank $n$. Suppose that $R_{1}(N)=R_{2}(N)+2e_{\mathfrak{p}}=0$.
			\begin{enumerate}[itemindent=-0.5em,label=\rm (\roman*)]
				\item  If $n=2$, then $ N\cong \mathbf{H}_{E}$ if and only if $ N\cong \mathbf{H}_{F}$, or $ N\cong \mathbf{A}_{F}$ and $f_{\pmid}$ is even; and $ FN\cong \mathbb{H}_{E}$ if and only if $ FN\cong \mathbb{H}_{F}$, or $ FN\cong \mathbb{A}_{F}$ and $f_{\pmid}$ is even.
				
				\item  If $n=4$ and $R_{3}(N)=R_{4}(N)+2e_{\mathfrak{p}}=0$, then $N\cong \mathbf{H}_{E}\perp \mathbf{H}_{E}$ if and only if $N\cong \mathbf{H}_{F}\perp \mathbf{H}_{F}$, or $N\cong \mathbf{H}_{F}\perp \mathbf{A}_{F}$ and $f_{\pmid}$ is even.
			\end{enumerate}
		\end{lem}
		\begin{proof}
			 (i) If $n=2$, since $R_{1}(N)=R_{2}(N)+2e_{\mathfrak{p}}=0$, \cite[Corollary 2.3(ii)]{HeHu2} implies that $d(c)=\infty$ or $2e_{\mathfrak{p}}$. By Lemma \ref{lem:defectP-p}(ii), $\widetilde{d}(c)=\infty$ if and only if $d(c)=\infty$, or $d(c)=2e_{\mathfrak{p}}$ and $f_{\pmid}$ is even. So the first assertion follows by Proposition \ref{prop:binary-quaternary-dyadic}(i). Similarly for the second assertion.
			 
			 (ii) If $n=4$, let $N\cong \prec b_{1},b_{2},b_{3},b_{4}\succ$. Then from \cite[Corollary 4.4(i)]{beli_integral_2003}, we have $N\cong \prec b_{1},b_{2}\succ \perp \prec b_{3},b_{4}\succ$. So the assertion follows by the first assertion of (i) and Proposition \ref{prop:binary-quaternary-dyadic}(ii).
		\end{proof}
 
  	We will prove the cases $n=1$, even $n\ge 2$, odd $n\ge 3$ in sequence. And we will use Proposition \ref{prop:R-extension} to treat the relation between $R$ and $\widetilde{R}$ repeatedly.
		
		\begin{proof}[Proof of Theorem \ref{thm:1-universaldyadic-overE}]
			\textbf{Necessity.} Suppose that $M$ is $1$-universal over $E$.

			By Theorem \ref{thm:1-universaldyadic}, we have $m\ge 2$, $R_{1}=\widetilde{R}_{1}=0$ and $\widetilde{\alpha}_{1}\in \{0,1\}$.
			
			Suppose $\widetilde{\alpha}_{1}=0$. By Proposition \ref{prop:alpha-extension}(iv), $\alpha_{1}=0$. If $m=2$ or $R_{3}>1$ or $R_{3}=1$ and $e_{\pmid}>1$, then $m=2$ or $\widetilde{R}_{3}>1$. By Theorem \ref{thm:1-universaldyadic}(i)(a), $[a_{1},a_{2}]\cong \mathbb{H}_{E}$. If $m\ge 3$, $R_{3}=e_{\pmid}=1$ and either $m=3$ or $R_{4}>2e_{\mathfrak{p}}+1$, then
			\begin{align*}
				m\ge 3,\quad   \widetilde{R}_{3}=1,\quad\text{and}\quad m=3\quad\text{or}\quad \widetilde{R}_{4}=R_{4}>2e_{\mathfrak{p}}+1=2e_{\mathfrak{P}}+1.
			\end{align*}
			Hence, by Theorem \ref{thm:1-universaldyadic}(i)(b), $[a_{1},a_{2}]\cong\mathbb{H}_{E}$. Therefore, condition (i) holds by Lemma \ref{lem:binary-quaternary-lifting-dyadic}(i).

			Now, suppose $m\ge 3$ and $\widetilde{\alpha}_{1}=1$. By Proposition \ref{prop:alpha-extension}(v), $\alpha_{1}=e_{\pmid}=1$. Hence the invariants $R_{i}$ and $\alpha_{i}$ are unchanged under field extensions. Thus condition (ii)(a) holds.

			Assume that  $f_{\pmid}$ is odd (and thus $[E:F]$ is odd),  $R_{2}\le 0$, $R_{3}\le 1$ and either $m=3$ or $R_{4}-R_{3}>2e_{\mathfrak{p}}$. Since $e_{\pmid}=1$, we have 
			\begin{align*}
				\widetilde{R}_{2}\le 0,\quad \widetilde{R}_{3}\le 1,\quad\text{and either}\quad m=3\quad\text{or}\quad \widetilde{R}_{4}-\widetilde{R}_{3}>2e_{\mathfrak{P}}.
			\end{align*}
			Hence, by Theorem \ref{thm:1-universaldyadic}(ii)(b), $[a_{1},a_{2},a_{3}]$ is isotropic over $E$, i.e.,  $(-a_{1}a_{2},-a_{1}a_{3})_{\mathfrak{P}}=1 $. So,  by \cite[Corollary]{bender_lifting_1973}, $(-a_{1}a_{2},-a_{1}a_{3})_{\mathfrak{p}}=1 $ and thus $[a_{1},a_{2},a_{3}]$ is isotropic over $F$. Therefore, condition (ii)(b) holds.

			\textbf{Sufficiency.} First, we have $m\ge 2$, $R_{1}=0$ and $\alpha_{1}\in \{0,1\}$. Then, by Propositions \ref{prop:R-extension}(ii) and \ref{prop:alpha-extension}(iv)(v),
			\begin{align*}
				\widetilde{R}_{1}=0\quad\text{and}\quad\widetilde{\alpha}_{1}\in \{0,1\}.
			\end{align*}

			Assume $\widetilde{\alpha}_{1}=0$. If $\widetilde{R}_{3}>1$, then $R_{3}>1$ or $R_{3}=1$ and $e_{\pmid}>1$; thus condition (i)(a) is satisfied.  If $\widetilde{R}_{3}=1$, then $R_{3}=e_{\pmid}=1$ and $R_{4}=\widetilde{R}_{4}>2e_{\mathfrak{P}}+1=2e_{\mathfrak{p}}+1$; thus condition (i)(b) is satisfied. Hence, in both cases, $[a_{1},a_{2}]\cong \mathbb{H}_{F}$, or $[a_{1},a_{2}]\cong \mathbb{A}_{F}$ and $f_{\pmid}$ is even. Thus, by Lemma \ref{lem:binary-quaternary-lifting-dyadic}(i), $[a_{1},a_{2}]\cong \mathbb{H}_{E}$. So, by Theorem \ref{thm:1-universaldyadic}, $M$ is 1-universal over $E$.

			Assume $\widetilde{\alpha}_{1}=1$. By Proposition \ref{prop:alpha-extension}(v), $\alpha_{1}=e_{\pmid}=1$.
			
			If $\widetilde{R}_{2}=1$ or $\widetilde{R}_{3}>1$, then $R_{2}=1$ or $R_{3}>1$. Hence $m\ge 4$ and $\alpha_{3}\le 2(e_{\mathfrak{p}}-\lfloor(R_{3}-R_{2})/2\rfloor)-1$. Since $e_{\pmid}=1$, the same conditions hold for the liftings of inequalities. Hence, by Theorem \ref{thm:1-universaldyadic}, $M$ is 1-universal over $E$.
			
			If $\widetilde{R}_{2}\le 0$, $\widetilde{R}_{3}\le 1$ and either $m=3$ or $\widetilde{R}_{4}-\widetilde{R}_{3}>2e_{\mathfrak{P}}$, then
			\begin{align*}
				R_{2}\le 0,\quad R_{3}\le 1,\quad\text{and}\quad m=3\quad\text{or}\quad R_{4}-R_{3}>2e_{\mathfrak{p}}.
			\end{align*}
			Recall that $e_{\pmid}=1$, so $f_{\pmid}$ and $[E:F]$ have the same parity. Hence if $f_{\pmid}$ is even, then, by \cite[Corollary]{bender_lifting_1973}, $(-a_{1}a_{2},-a_{1}a_{3})_{\mathfrak{P}}=1 $; if $f_{\pmid}$ is odd, then, by \cite[Corollary]{bender_lifting_1973} and condition (ii)(b), $(-a_{1}a_{2},-a_{1}a_{3})_{\mathfrak{P}}=(-a_{1}a_{2},-a_{1}a_{3})_{\mathfrak{p}}=1$. Hence in both cases, $[a_{1},a_{2},a_{3}]$ is isotropic over $E$. So, again by Theorem \ref{thm:1-universaldyadic}, $M$ is 1-universal over $E$.
		\end{proof}

		 \begin{proof}[Proof of Theorem \ref{thm:even-n-universaldyadic-overE}]
		 	\textbf{Necessity.} Suppose that $M$ is $n$-universal over $E$.
		 	
		 	 From Theorem \ref{thm:even-n-universaldyadic}(i), we have $\widetilde{R}_{i}=0$ for $i\in [1,n+1]^{O}$ and $\widetilde{R}_{i}=-2e_{\mathfrak{P}}$ for $i\in [1,n]^{E}$. Then 
		 	 \begin{align*}
		 	 	R_{i}=0\quad\text{for $i\in [1,n+1]^{O}$}\quad\text{and}\quad R_{i}=-2e_{\mathfrak{p}}\quad\text{for $i\in [1,n]^{E}$}.
		 	 \end{align*}
		 	 This shows condition (i).
		 	 
		 	  If $m=n+2=4$, then, by Theorem \ref{thm:even-n-universaldyadic}(ii), $M\cong \mathbf{H}_{E}\perp \mathbf{H}_{E}$, which, by Lemma \ref{lem:binary-quaternary-lifting-dyadic}(ii), is equivalent to that $ M\cong \mathbf{H}_{F}\perp \mathbf{H}_{F}$, or $M\cong \mathbf{H}_{F}\perp \mathbf{A}_{F}$ and $f_{\pmid}$ is even. This shows condition (ii).
		 	 
		 	 Suppose $m\ge n+3$. Then, from Theorem \ref{thm:even-n-universaldyadic}(iii)(a), $\widetilde{\alpha}_{n+1}\in \{0,1\}$. By Proposition \ref{prop:alpha-extension}(iv) and (v), we have
		 	 \begin{align}\label{alphan+1}
		 	 	 \alpha_{n+1}=0\quad\text{or}\quad\alpha_{n+1}=e_{\pmid}=1.
		 	 \end{align}

		 	 If $R_{n+3}-R_{n+2}>2e_{\mathfrak{p}}$, then  $\widetilde{R}_{n+3}-\widetilde{R}_{n+2}>2e_{\mathfrak{P}} $. So, by Theorem \ref{thm:even-n-universaldyadic}(iii)(b), $\widetilde{R}_{n+2}=-2e_{\mathfrak{P}}$. Hence $R_{n+2}=-2e_{\mathfrak{p}}$. Assume further that either $n\ge 4$, or $n=2$, $d(a_{1,4})=2e_{\mathfrak{p}}$ and $f_{\pmid}$ is odd. In the latter case, we have $\widetilde{d}(a_{1,4})=2e_{\mathfrak{P}} $. Hence, by Theorem \ref{thm:even-n-universaldyadic}(iii)(b), $\widetilde{R}_{n+3}=1$. So Proposition \ref{prop:R-extension}(iv) implies $ R_{n+3}=1 $ and $e_{\pmid}=1$. 
		 	 
		 	 Therefore, condition (iii)(a) and (b) hold.

		 If $R_{n+3}-R_{n+2}=2e_{\mathfrak{p}}$ and $R_{n+2}=2-2e_{\mathfrak{p}}$, then Proposition \ref{prop:alphaproperty}(ii) implies that $\alpha_{n+1}\not=0$. Hence, by \eqref{alphan+1}, $\alpha_{n+1}=e_{\pmid}=1$. It follows that $\widetilde{R}_{n+3}-\widetilde{R}_{n+2}=2e_{\mathfrak{P}}$ and $\widetilde{R}_{n+2}=2-2e_{\mathfrak{P}}$. Hence, by Theorem \ref{thm:even-n-universaldyadic}(iii)(c),  $\widetilde{d}(-a_{n+1}a_{n+2})=2e_{\mathfrak{P}}-1$. Since $e_{\pmid}=1$, Lemma \ref{lem:defectP-p}(ii) implies that
		 \begin{align*}
		 	 d(-a_{n+1}a_{n+2})=d(-a_{n+1}a_{n+2})e_{\pmid}\le \widetilde{d}(-a_{n+1}a_{n+2})=2e_{\mathfrak{P}}-1=2e_{\mathfrak{p}}-1.
		 \end{align*}
		 On the other hand, from \eqref{eq:alpha-defn} we have 
		  \begin{align*}
		  	d(-a_{n+1}a_{n+2})\ge R_{n+1}-R_{n+2}+\alpha_{n+1}=0-(2-2e_{\mathfrak{p}})+1=2e_{\mathfrak{p}}-1.
		  \end{align*}
		   So $d(-a_{n+1}a_{n+2})=2e_{\mathfrak{p}}-1$. Thus condition (iii)(c) also holds.
		 
		 \textbf{Sufficiency.}   Suppose that $M$ satisfies conditions (i)-(iii).
		 
		First, we have $R_{i}=0$ for $i\in [1,n+1]^{O}$ and $R_{i}=-2e_{\mathfrak{p}}$ for $i\in [1,n]^{E}$.
		 	 
		 If $m=n+2=4$, then, by Lemma \ref{lem:binary-quaternary-lifting-dyadic}(ii), $M\cong \mathbf{H}_{E}\perp \mathbf{H}_{E}$. Hence, by Theorem \ref{thm:even-n-universaldyadic}, $ M $ is $2$-universal over $E$.
		 	 
		 	 Suppose $m\ge n+3$. If $\alpha_{n+1}=0$, by Proposition \ref{prop:alpha-extension}(iv), $\widetilde{\alpha}_{n+1}=0$; if $\alpha_{n+1}=e_{\pmid}=1$, by Proposition \ref{prop:alpha-extension}(v), $\widetilde{\alpha}_{n+1}= 1$. If $\widetilde{R}_{n+3}-\widetilde{R}_{n+2}\le 2e_{\mathfrak{P}}$, then, by Theorem \ref{thm:even-n-universaldyadic}, $M$ is $n$-universal over $E$.
		 	 
		 	  Assume $\widetilde{R}_{n+3}-\widetilde{R}_{n+2}>2e_{\mathfrak{P}} $. Then $R_{n+3}-R_{n+2}>2e_{\mathfrak{p}}$. Hence, by condition (iii)(b),  $ R_{n+2}=-2e_{\mathfrak{p}}$, so $\widetilde{R}_{n+2}=-2e_{\mathfrak{P}}$. In particular, when $n=2$, $\widetilde{R}_{4}=-2e_{\mathfrak{P}}$, and \cite[Proposition 2.7(iii)]{HeHu2} implies that $d(a_{1,4})=2e_{\mathfrak{p}}$ or $\infty$.

		 	  If $n\ge 4$, or $n=2$, $d(a_{1,4})=2e_{\mathfrak{p}}$ and $f_{\pmid}$ is odd, then, by Lemma \ref{lem:defectP-p}(ii), $\widetilde{d}(a_{1,4})=2e_{\mathfrak{P}}$. Also, $R_{n+3}=e_{\pmid}=1$ from condition (iii)(b) and so $\widetilde{R}_{n+3}=1$. Hence, by Theorem \ref{thm:even-n-universaldyadic}, $M$ is $n$-universal over $E$.
		 	 
		 	 If $n=2$ and either $d(a_{1,4})=\infty $, or $d(a_{1,4})=2e_{\mathfrak{p}}$ and $f_{\pmid}$ is even, then, by Lemma \ref{lem:defectP-p}(ii), $\widetilde{d}(a_{1,4})=\infty$. Hence, by Lemma \ref{lem:binary-quaternary-lifting-dyadic}(ii), $\prec a_{1},a_{2},a_{3},a_{4}\succ\cong \mathbf{H}_{E}\perp \mathbf{H}_{E}$. Thus the $\mathcal{O}_{F}$-sublattice $\prec a_{1},a_{2},a_{3},a_{4}\succ $ of $M $ is $2$-universal over $E$, so is $M$.

		 	 Suppose that $\widetilde{R}_{n+3}-\widetilde{R}_{n+2}=2e_{\mathfrak{P}}$ and $\widetilde{R}_{n+2}=2-2e_{\mathfrak{P}}$. By Proposition \ref{prop:R-extension}(i),  $(R_{n+2}+2e_{\mathfrak{p}})e_{\pmid}=\widetilde{R}_{n+2}+2e_{\mathfrak{P}}=2$. So $e_{\pmid}$ divides $2$ and hence $e_{\pmid}\in \{1,2\}$.
		 	 
		 	 If $e_{\pmid}=1$, then $R_{n+2}=\widetilde{R}_{n+2}=2-2e_{\mathfrak{P}}=2-2e_{\mathfrak{p}}$; if $e_{\pmid}=2$, then $2R_{n+2}=\widetilde{R}_{n+2}=2-2e_{\mathfrak{P}}$ and thus $R_{n+2}=1-e_{\mathfrak{P}}$. Hence
		 	 \begin{align*}
		 	 	(e_{\pmid},R_{n+2})=(1,2-2e_{\mathfrak{p}})\quad\text{or}\quad (2,1-e_{\mathfrak{P}}).
		 	 \end{align*}
		 
		 	  Since $\alpha_{n+1}=1$, Proposition \ref{prop:alphaproperty}(iii) implies that $R_{n+2}=R_{n+2}-R_{n+1}\in [2-2e_{\mathfrak{p}},0]^{E}\cup \{1\}$. Assume that $(e_{\pmid},R_{n+2})=  (2,1-e_{\mathfrak{P}})$. Then  $R_{n+2}\not=1$, otherwise, $e_{\mathfrak{P}}=0$. Hence $R_{n+2}$ must be even. But $R_{n+2}=1-e_{\mathfrak{P}}\not\equiv e_{\mathfrak{P}}\pmod{2}$ and thus $e_{\mathfrak{P}}$ is odd, so is $e_{\pmid}$. This is a contradiction. Hence
		 	  \begin{align*}
		 	  	 e_{\pmid}=1\quad\text{and}\quad R_{n+2}=2-2e_{\mathfrak{p}}.
		 	  \end{align*}
		 	   From the hypothesis, we see that $R_{n+3}-R_{n+2}=\widetilde{R}_{n+3}-\widetilde{R}_{n+2}=2e_{\mathfrak{P}}=2e_{\mathfrak{p}}$. So, by Lemma \ref{lem:defectP-p}(ii) and condition (iii)(c), we conclude that
		 	 \begin{align*}
		 	 	\widetilde{d}(-a_{n+1}a_{n+2})=d(-a_{n+1}a_{n+2})=2e_{\mathfrak{p}}-1=2e_{\mathfrak{P}}-1.
		 	 \end{align*}
		 	  Hence, by Theorem \ref{thm:even-n-universaldyadic}, $M$ is $n$-universal over $E$.
		 \end{proof}

	\begin{proof}[Proof of Theorem \ref{thm:odd-n-universaldyadic-overE}]
		For necessity, suppose that $M$ is $n$-universal over $E$. By Theorem \ref{thm:odd-n-universaldyadic}(i), we have $ \widetilde{R}_{i}=0 $ for $ i\in [1,n]^{O} $, $ \widetilde{R}_{i}=-2e_{\mathfrak{P}} $ for $ i\in [1,n]^{E} $,  $ \widetilde{\alpha}_{n}\in \{0,1\} $ and $\widetilde{R}_{n+3}-\widetilde{R}_{n+2}\le 2e_{\mathfrak{P}}$. Then, by Propositions \ref{prop:R-extension} and \ref{prop:alpha-extension}(iv)(v), one has the following conditions:
		\begin{align*}
			&R_{i}=0 \quad\text{for $i\in [1,n]^{O}$},\quad 	R_{i}=-2e_{\mathfrak{p}} \quad\text{for $i\in [1,n]^{E}$},\quad R_{n+3}-R_{n+2}\le 2e_{\mathfrak{p}}\quad\text{and} \\
			&\alpha_{n}=0\quad\text{or}\quad  \alpha_{n}=e_{\pmid}=1.
		\end{align*}
		 Therefore, conditions (i) and (iii) follow.
		
		If $\alpha_{n}=0$, by Proposition \ref{prop:alpha-extension}(iv), $\widetilde{\alpha}_{n}=0$. So, by Theorem \ref{thm:odd-n-universaldyadic}(ii), $\widetilde{R}_{n+2}\in \{0,1\}$. Hence 
		\begin{align*}
			R_{n+2}=\widetilde{R}_{n+2}=0\quad\text{or}\quad R_{n+2}=e_{\pmid}=\widetilde{R}_{n+2}=1.
		\end{align*}
		This shows the first part of condition (ii).
		
		If $\alpha_{n}=e_{\pmid}=1$, then the quantities $e_{\mathfrak{p}},R_{n+1},R_{n+2},\alpha_{n},\alpha_{n+2}$ are unchanged under field extensions, which are equal to $e_{\mathfrak{P}},\widetilde{R}_{n+1},\widetilde{R}_{n+2}, \widetilde{\alpha}_{n},\widetilde{\alpha}_{n+2}$, respectively. This shows the second part of condition (ii). 
		
		A similar argument can be applied to the sufficiency by using Propositions \ref{prop:R-extension} and  \ref{prop:alpha-extension}(iv)(v).
 	\end{proof}
 
In the rest, we assume that $F$ is an algebraic number field and $E$ a finite extension of $F$. To show Theorem \ref{thm:globally-n-universal-extension-E}, we also need to know when $n$-universality satisfies the local-global principle over $F$. Although Definition \ref{defn:n-uni} differs from \cite[Definition 1.4(3)]{hhx_indefinite_2021} when $n\ge 2$, we have a similar result by adapting the reasoning in \cite[Theorem 1.1]{hhx_indefinite_2021}, because the $n$-universality is trivial at real primes from our definition and at complex primes from \cite[Theorem 2.3]{hhx_indefinite_2021}.

\begin{lem}\label{lem:local-global}
	Let $L$ be an indefinite $\mathcal{O}_{F}$-lattice. Suppose that  $n\ge 3$, or $n=2$ and the class number of $F$ is odd. Then $L$ is $n$-universal over $F$ if and only if for $\mathfrak{p}\in \Omega_{F}\backslash \infty_{F}$, $L_{\mathfrak{p}}$ is $n$-universal over $F_{\mathfrak{p}}$.
\end{lem}

 \begin{proof}[Proof of Theorem \ref{thm:globally-n-universal-extension-E}]
 	 Suppose that $L$ is $n$-universal over $E$. Then for each $\mathfrak{a}\in \Omega_{F}$, $L_{\mathfrak{a}}$ is $n$-universal over $E_{\mathfrak{A}}$ for all primes $\mathfrak{A}$ dividing $ \mathfrak{a}$. By Lemma \ref{lem:local-global}, it is sufficient to show that $L_{\mathfrak{p}}$ is $n$-universal over $F_{\mathfrak{p}}$ for each $\mathfrak{p}\in \Omega_{F}\backslash \infty_{F}$. 
 	 
 	 Let $\mathfrak{p}\in \Omega_{F}\backslash\infty_{F}$. Then there exists $\mathfrak{P}\in \Omega_{E}\backslash \infty_{E}$  dividing $\mathfrak{p}$. Since $L_{\mathfrak{p}}$ is $n$-universal over $E_{\mathfrak{P}}$, by Corollaries \ref{cor:non-dyadic-n-universal-overE-implies-overF} and \ref{cor:dyadic-n-universal-overE-implies-overF}, $L_{\mathfrak{p}}$ is also $n$-universal over $F_{\mathfrak{p}}$, as desired.
 \end{proof}


 \section*{Acknowledgments}
I would like to express my gratitude to Prof. Fei Xu for helpful comments and suggestions, and to Prof. Constantin-Nicolae Beli for detailed discussions, particularly for guidance on the formulas related to relative integral spinor norms in terms of BONGs. This work was supported by the National Natural Science Foundation of China (Project No. 12301013).

\end{document}